\documentclass{amsart}

\usepackage{amsmath, amsfonts, amssymb}  
\usepackage{enumerate}
\usepackage{enumitem}
\usepackage[hidelinks]{hyperref}
\usepackage{graphicx}
\usepackage[all]{xy}
\usepackage{wrapfig}
\usepackage{fancyvrb}
\usepackage[mathscr]{euscript}
\usepackage[T1]{fontenc}
\usepackage{listings}
\usepackage{tikz}
\usepackage{amsthm}
\usepackage{dsfont}
\usepackage{tikz-cd}
\usepackage{adjustbox}
\usepackage{centernot}
\usepackage{mathtools}
\usepackage{cite}
\usepackage{bbm}
\usepackage{stmaryrd}
\usetikzlibrary{external}
\usepackage[margin=3cm]{geometry}

\makeatletter
\DeclareRobustCommand{\cev}[1]{
  {\mathpalette\do@cev{#1}}
}
\newcommand{\do@cev}[2]{
  \vbox{\offinterlineskip
    \sbox\z@{$\m@th#1 x$}
    \ialign{##\cr
      \hidewidth\reflectbox{$\m@th#1\vec{}\mkern4mu$}\hidewidth\cr
      \noalign{\kern-\ht\z@}
      $\m@th#1#2$\cr
    }
  }
}
\makeatother

\let\oldtocsection=\tocsection

\let\oldtocsubsection=\tocsubsection

\let\oldtocsubsubsection=\tocsubsubsection

\renewcommand{\tocsection}[2]{\hspace{0em}\oldtocsection{#1}{#2}}
\renewcommand{\tocsubsection}[2]{\hspace{1em}\oldtocsubsection{#1}{#2}}
\renewcommand{\tocsubsubsection}[2]{\hspace{2em}\oldtocsubsubsection{#1}{#2}}

\title{A Betti Geometric Casselman--Shalika Equivalence}
\author{Colton Sandvik}
\date{}

\newcommand{\C}{\mathbb{C}}
\newcommand{\Q}{\mathbb{Q}}
\newcommand{\Z}{\mathbb{Z}}
\newcommand{\F}{\mathbb{F}}

\renewcommand{\L}{\mathcal{L}}
\newcommand{\A}{\mathbb{A}}
\newcommand{\G}{\mathbb{G}}
\renewcommand{\k}{\mathbbm{k}}

\DeclareMathOperator{\dR}{dR}

\newcommand{\Dmod}{D\textnormal{-mod}}
\newcommand{\rh}{\textnormal{rh}}
\newcommand{\hol}{\textnormal{hol}}
\renewcommand{\char}{\textnormal{char}}

\DeclareMathOperator{\Sch}{Sch}

\newcommand{\scrF}{\mathcal{F}}
\newcommand{\scrJ}{\mathcal{J}}
\newcommand{\scrG}{\mathcal{G}}

\newcommand{\scrS}{\mathcal{S}}

\newcommand{\scrK}{\mathcal{K}}

\newcommand{\scrP}{\mathcal{P}}

\newcommand{\bfX}{\mathbf{X}}
\newcommand{\bfXv}{\check{\mathbf{X}}}

\newcommand{\IW}{\mathcal{IW}}

\DeclareMathOperator{\Hom}{Hom}

\DeclareMathOperator{\im}{im}

\DeclareMathOperator{\cons}{c}

\DeclareMathOperator{\Ind}{Ind}
\DeclareMathOperator{\Inv}{Inv}
\DeclareMathOperator{\Ext}{Ext}

\DeclareMathOperator{\Rep}{Rep}

\renewcommand{\mod}[1]{#1\textnormal{-mod}}

\newcommand{\DD}{\mathbb{D}}
\renewcommand{\H}{\mathbb{H}}
\DeclareMathOperator{\IC}{IC}
\DeclareMathOperator{\Av}{Av}

\newcommand{\uk}{\underline{\k}}

\DeclareMathOperator{\Spec}{Spec}
\DeclareMathOperator{\For}{For}
\DeclareMathOperator{\pt}{pt}
\DeclareMathOperator{\id}{id}

\newcommand{\scrQ}{\mathcal{Q}}

\DeclareMathOperator{\Gr}{Gr}

\DeclareMathOperator{\Lie}{Lie}
\newcommand{\ilim}{\lim_{\leftarrow}}

\newcommand{\fr}[1]{\mathfrak{#1}}
\DeclareMathOperator{\op}{op}
\DeclareMathOperator{\fg}{fg}

\DeclareMathOperator{\aff}{aff}
\DeclareMathOperator{\Wh}{Wh}
\DeclareMathOperator{\Kir}{Kir}
\DeclareMathOperator{\Whit}{Whit}
\newcommand{\sstar}{\stackrel{*}{\star}}
\newcommand{\cstar}{\stackrel{!}{\star}}

\newtheorem{lemma}[subsubsection]{Lemma}
\newtheorem{proposition}[subsubsection]{Proposition}
\newtheorem{theorem}[subsubsection]{Theorem}

\theoremstyle{definition}
\newtheorem{definition}[subsubsection]{Definition}
\newtheorem{example}[subsubsection]{Example}
\newtheorem{remark}[subsubsection]{Remark}

\newenvironment{midsecproof}[1]{\vspace{\topsep} \noindent \textit{Proof of #1.}}{$\square$ \vspace{\topsep}}

\AtBeginEnvironment{tikzcd}{\tikzexternaldisable}
\AtEndEnvironment{tikzcd}{\tikzexternalenable}

\begin{document}

        \begin{abstract}
                Whittaker sheaves are ubiquitous in geometric representation theory; however, their definition requires one to restrict the sheaf-theoretic setting to either étale sheaves or $D$-modules.
                Gaitsgory and Lysenko proposed a solution to this problem called the Kirillov model, which is well-defined for many sheaf theories.
                In this paper, we advance the study of the Kirillov model in the setting of Betti sheaves with a particular emphasis on developing a theory of Iwahori--Whittaker sheaves on the affine Grassmannian.
                Using this framework, we prove a Betti geometric Casselman--Shalika equivalence, which relates Iwahori--Whittaker perverse sheaves on the affine Grassmannian with the Satake category.
        \end{abstract}

	\maketitle

    \tableofcontents

    \section{Introduction} 
        \subsection{Geometric Satake}

    Let $\k$ be a commutative noetherian ring of finite global dimension.
    Let $G$ be a complex connected reductive group, and write $\L G$ and $\L^+ G$ for its loop group and positive loop group, respectively.
    We will denote the affine Grassmannian for $G$ by $\Gr = \L G / \L^+ G$.

    The celebrated geometric Satake equivalence of Mirković--Vilonen \cite{MV} gives an equivalence of symmetric monoidal categories,
    \[ P_{\L^+ G} (\Gr, \k) \stackrel{\sim}{\to} \Rep (G_{\k}^{\vee}).\]
    Here $P_{\L^+ G} (\Gr, \k)$ denotes the category of $L^+ G$-equivariant perverse sheaves on $\Gr$, and $\Rep (G_{\k}^{\vee})$ denotes the category of representations of the Langlands dual split reductive group over $\k$ on finitely generated $\k$-modules.
    Under this equivalence, the convolution of spherical perverse sheaves corresponds to the tensor product of $G_{\k}^{\vee}$-modules.
    
    One of the remarkable features of the geometric Satake equivalence is the sheer generality in which it holds. Namely, it makes only minor restrictions on the coefficient ring $\k$.
    The equivalence also makes sense for other sheaf theories on the automorphic side. For example, while we stated the equivalence using Betti sheaves, analogous versions exist for étale and de Rham sheaves. 

    \subsection{The Geometric Casselman--Shalika Equivalence}

    We will momentarily switch our sheaf-theoretic setting to étale sheaves.
    Let $\F$ be an algebraically closed field of positive characteristic, and let $\k$ be either a finite field of characteristic $\ell \neq \char (\F)$, or a finite extension of $\Q_{\ell}$, or the ring of integers of such an extension.
    We now view $G$ as being a connected reductive group over $\F$. Similarly, the loop group, positive loop group, and affine Grassmannian are viewed as $\F$-schemes.

    There is another notable description of the automorphic side of the geometric Satake equivalence provided by Bezrukavnikov--Gaitsgory--Mirković--Riche--Rider \cite{BGMRR}.
    Their description gives an equivalence of categories
    \[P_{\L^+ G} (\Gr, \k) \stackrel{\sim}{\to} P_{\IW} (\Gr, \k)\]
    where $P_{\IW} (\Gr, \k)$ denotes the category of Iwahori--Whittaker perverse sheaves on $\Gr$. 
    We call this the \emph{geometric Casselman--Shalika equivalence}.

    The Iwahori--Whittaker category is  not visibly monoidal unlike spherical perverse sheaves; however it two key advantages over the spherical category. First, the (co-)standard objects have more explicit descriptions as Iwahori--Whittaker sheaves.
    As a result, the highest weight structure on $P_{\IW} (\Gr, \k)$ is quite transparent.
    Second, the realization functor
    \[D^b P_{\IW} (\Gr, \k) \to D_{\IW}^b (\Gr, \k)\]
    is an equivalence of categories. Combining this with the geometric Satake equivalence, one obtains an equivalence of triangulated categories
    \[D_{\IW}^b (\Gr, \k) \cong D^b \Rep (G_{\k}^{\vee}). \] 
    As a result, the derived category of $G_{\k}^{\vee}$-representations admits a geometric realization. This is in contrast with $D_{\L^+ G}^b (\Gr, \k)$ where the spectral side is more complicated (see \cite{BF}).

    We note that a de Rham version of the geometric Casselman--Shalika equivalence \cite{ABBGM} predates the étale version of \cite{BGMRR}.
    The restriction to the de Rham or étale setting has historically been a necessary inconvenience when working with Whittaker models.
    Namely, the starting point of the theory is the existence of a non-trivial multiplicative sheaf on $\A^1$, e.g., the exponential $D$-module or an Artin--Schreier sheaf.
    However, no such sheaf exists in the Betti setting. A solution to this problem has been proposed by Gaitsgory and Lysenko called the \emph{Kirillov model} \cite{GaiWhit, GL}.
    Philosophically, the solution replaces the exponential $D$-module or Artin--Schreier sheaf by the Fourier--Laumon kernel, a certain $\G_m$-equivariant sheaf on $\A^1$.
    This affords a version of the Iwahori--Whittaker sheaves in the Betti setting which does not require any restrictions on the coefficient ring.
    
    We can now state the main theorem of the paper, a Betti version of the geometric Casselman--Shalika equivalence.
    \begin{theorem}\label{thm:1}
        There is an equivalence of categories,
        \[P_{\L^+ G} (\Gr, \k) \stackrel{\sim}{\to} P_{\IW} (\Gr, \k).\]
    \end{theorem}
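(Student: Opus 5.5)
The functor will be the one used in \cite{BGMRR}: convolution with a distinguished Iwahori--Whittaker object. Concretely, I will first set up $P_{\IW}(\Gr,\k)$ in the Kirillov-model framework of \cite{GaiWhit, GL}. Here the Artin--Schreier or exponential sheaf is replaced by the $\G_m$-equivariant Fourier--Laumon kernel on $\A^1$. With that in place, I will show that the orbit-stratification behaves as in the étale setting. The strata supporting nonzero IW objects will be the $I^+$-orbits (or their Kirillov analogues) $X_\lambda$ with $\lambda$ dominant-regular after a $\rho$-shift, i.e. indexed by $\lambda \in X^\vee_{++}$, or by $X^\vee_+$ after translation by $\rho^\vee$. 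On each such stratum the category of IW local systems will be equivalent to $\k$-modules. Consequently standard and costandard objects $\Delta^{\IW}_\lambda, \nabla^{\IW}_\lambda$ exist, they are perverse, since the orbit embeddings are affine, and $P_{\IW}(\Gr,\k)$ is highest weight. The key input from the Betti setting is a clean/vanishing statement: the $!$- and $*$-extensions from the base orbit $X_0$ agree. I will prove it by the Kirillov-model analogue of the fact that the averaged sheaf has vanishing cohomology on the nontrivial $\A^1$-fibers. In the Kirillov picture this says that the Fourier--Laumon kernel restricted away from zero has zero total cohomology with compact supports.

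Next I will define
\[\Phi\colon P_{\L^+G}(\Gr,\k)\to P_{\IW}(\Gr,\k),\qquad \scrF\mapsto \Delta^{\IW}_0\star \scrF,\]
where $\Delta^{\IW}_0=\nabla^{\IW}_0$ is the clean object on the base orbit. I will check three properties. First, convolution with an object on a closed orbit preserves the IW condition; this is formal from equivariance. Second, $\Phi$ is t-exact. For this I will use that convolution $\Delta^{\IW}_0\star(-)$ can be rewritten via the averaging functor $\Av_{\IW}$ applied to a $\L^+G$-equivariant sheaf. Since $\Av_{\IW}$ from $\L^+G$-equivariant sheaves is both a $!$- and a $*$-averaging, up to the clean statement above, exactness reduces to the affineness of the maps $X_\lambda\hookrightarrow \Gr$ together with the Mirković--Vilonen dimension estimates for $S_\nu\cap \Gr^\lambda$. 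Third, $\Phi$ sends the standard spherical objects $\scrJ_!(\lambda)$ and costandard objects $\scrJ_*(\lambda)$ to $\Delta^{\IW}_\lambda$ and $\nabla^{\IW}_\lambda$. This follows because $X_\lambda\cap \Gr^\mu$ is empty unless $\lambda\le\mu$, and $X_\lambda\subset\Gr^\lambda$ is open dense, so restriction to $X_\lambda$ realizes the fiber functor on the top stratum.

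Finally, full faithfulness and essential surjectivity follow from a standard highest-weight argument, applied first over a field and then over general $\k$ by base change, since $\k$ is noetherian of finite global dimension. Specifically, $\Phi$ gives isomorphisms
\[\Hom(\scrJ_!(\lambda),\scrJ_*(\mu)[n])\cong \Hom(\Delta^{\IW}_\lambda,\nabla^{\IW}_\mu[n])\quad\text{for } n=0,1,\]
both sides being $\k$ for $n=0$ and $\lambda=\mu$, and zero otherwise. On the spherical side this is the Ext-vanishing of \cite{MV}, valid for general $\k$. On the IW side it holds by adjunction on strata. Hence $\Phi$ induces isomorphisms on $\Hom$ and $\Ext^1$ between objects with standard and costandard filtrations. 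A five-lemma argument on truncated objects then gives full faithfulness. Essential surjectivity follows because every object of $P_{\IW}$ is a quotient of standard-filtered projectives in truncated subcategories, and these lie in the image.

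The main obstacle I expect is the t-exactness and cleanness step in the Kirillov model. In the étale setting these rely directly on the vanishing $H^*_c(\A^1,\L_\psi)=0$. Here that vanishing has to be replaced by a statement about the Fourier--Laumon kernel and $\G_m$-equivariance, and one must check that it interacts correctly with the $\L^+G$-action and with convolution. I would first try to prove the needed vanishing fiberwise on each $S_\nu\cap\Gr^\lambda$, using the $\G_m$-contraction to reduce to a computation on $\A^1$.
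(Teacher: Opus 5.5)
Your overall architecture is the paper's: a Kirillov-model $\IW$ category with affine-orbit standard and costandard objects, $\Phi=\Delta^{\IW}_{\zeta}\star(-)$, $t$-exactness, identification of $\Phi$ on $\scrJ_!$ and $\scrJ_*$, then an Ext comparison. However, your third step has a genuine gap.

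Support considerations and the density of $\Gr^+_{\zeta+\lambda}$ only give a canonical map $\Delta^{\IW}_{\zeta+\lambda}(\k)\to\Phi(\scrJ_!(\lambda,\k))$ that is an isomorphism over the open orbit. They say nothing about the lower orbits $\Gr^+_{\zeta+\mu}$, $\mu\prec\lambda$. Your reasoning applies verbatim to $\scrJ_*(\lambda,\k)$ and $\scrJ_{!*}(\lambda,\k)$, so it cannot be what separates $\Delta^{\IW}$, $\IC^{\IW}$ and $\nabla^{\IW}$, which differ for modular coefficients.

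What is needed is the vanishing $\Hom(\Phi(\scrJ_!(\lambda,\k)),\nabla^{\IW}_{\zeta+\mu}(\k))=0$ for $\mu\neq\lambda$ over a field; this forces that map to be surjective. In the paper (Lemmas \ref{lem:hom_and_cohomology_std_costd}--\ref{lem:hom_vanishing_between_Phi_and_costds}) this is proved as follows.
\begin{enumerate}
\item Using Lemma \ref{lem:desc_of_Phi}, base change and duality, the Hom is rewritten as the dual of $H^{\langle\lambda+\mu,2\rho\rangle+1}R\Gamma_{c,\G_m}$ of $(\chi^{\zeta}_{\mu})^*u_*\k$ restricted to $Y_{\lambda,\mu}=\Gr^{\lambda}\cap z^{-\zeta}\Gr^+_{\zeta+\mu}\subset\Gr^{\lambda}\cap S_{\mu}$.
\item The $t$-exactness of $\Phi$ kills the higher term in the triangle of Lemma \ref{lem:G_m_equiv_cohomology_comparison}, reducing to ordinary compactly supported cohomology.
\item The excision triangle $s'_*\uk_{Y^0_{\lambda,\mu}}[-2]\to\uk_{Y_{\lambda,\mu}}\to(\chi^{\zeta}_{\mu})^*u_*\k\vert_{Y_{\lambda,\mu}}\to$ reduces everything to two dimension bounds: $\dim Y_{\lambda,\mu}\le\langle\lambda+\mu,\rho\rangle$ (Mirković--Vilonen), and the strict bound $\dim Y^0_{\lambda,\mu}<\langle\lambda+\mu,\rho\rangle$, i.e.\ $\chi$ is nonconstant on every top-dimensional component of the MV cycle.
\end{enumerate}
That last fact is the real geometric content. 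The paper deduces it from the geometric Casselman--Shalika formula of \cite{NP,FGV} via $D$-modules. Your suggested $\G_m$-contraction to a computation on $\A^1$ will not produce it. Moreover, the Fourier--Laumon kernel is not a local system, so the top-degree argument of the étale case is unavailable; that is why the excision triangle is needed.

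Passing from fields to $\Z$ also needs its own argument. Surjectivity modulo every $p$ gives surjectivity over $\Z$. Over $\Q$ the map is an isomorphism because $\Delta^{\IW}_{\zeta+\lambda}(\Q)$ is simple (Lemma \ref{lem:semisimple_char_0}, obtained through Riemann--Hilbert and \cite{ABBGM}). Hence the kernel is a torsion subobject of a torsion-free object and vanishes.

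Some smaller points. Cleanness of the base object is not a statement about the Fourier--Laumon kernel ``away from zero''. On $\G_m$ that kernel is constant with nonzero compactly supported cohomology; only $R\Gamma_c(\A^1,u_*\uk[1])$ vanishes. Cleanness holds because the boundary of $\overline{\Gr^+_{\zeta}}$ consists of orbits indexed by non-regular coweights, where the Kirillov model is zero (Proposition \ref{prop:reduction_to_flag_variety}).

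Your route to $t$-exactness through ``$!$-averaging $=$ $*$-averaging'' is unnecessary, and awkward here since $\Av^!_{\Kir}$ is a priori only left $t$-exact. Over a field, $t$-exactness is just Lemma \ref{lem:H_conv_t_exactness}, because the Kirillov $t$-structure is restricted from $D^b_{K_{\chi}\rtimes\G_m}(\Gr,\k)$; general $\k$ then follows by base-change lemmas.

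In the last step you must also check that $\Phi$ sends the canonical map $\scrJ_!(\lambda,\k)\to\scrJ_*(\lambda,\k)$ to a generator; the paper checks nonvanishing over every residue field via supports. For non-field $\k$ it is cleaner to argue at the derived level, as the paper does: use Proposition \ref{prop:realization_is_equivalence} and compare all $\Ext^n$, rather than trying to deduce an abelian equivalence over $\k$ from the field case.
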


    The general strategy of proving Theorem \ref{thm:1} is closely modeled on that of \cite{ABBGM}.
    However, complications are added when using the Kirillov model.
    \begin{enumerate}
        \item There is a certain lack of symmetry in the Kirillov model. In fact, there is a $!$- and $*$-version of the Kirillov model, and Verdier duality exchanges them.
        Likewise, the $!$-Kirillov model (resp. $*$-Kirillov model) only behaves well with $!$-pushforwards and $*$-pullbacks (resp. $*$-pushforwards and $!$-pullbacks).
        As a result, more care has to be employed when performing sheaf-theoretic arguments. This problem is primarily formal. We will address it by introducing new sheaf functors which serve as replacements for the ones missing due to the lack of symmetry.
        \item The additional $\G_m$-equivariance makes certain cohomological arguments more complicated. For example, there is no obvious fully faithful forgetful functor from $D_{\IW}^b (\Gr, \k)$ to $D_{\cons}^b (\Gr, \k)$.
        \item The Fourier--Laumon kernel is not a local system on $\A^1$. As a result, even after forgetting $\G_m$-equivariance, one cannot use Artin vanishing theorem style arguments when studying the cohomology of a variety with respect to a Fourier--Laumon kernel. 
    \end{enumerate}

    \begin{remark}
    In the final stages of preparing this paper, the author was made aware of related work of Cass, van den Hove, and Scholbach \cite{CvdHS} that uses the Kirillov model to give a motivic-variant of Theorem \ref{thm:1}.
    More precisely, the categories of Betti perverse sheaves can be replaced by categories of (reduced) mixed Tate motives.
    T. van den Hove explained to the author that Theorem \ref{thm:1} can be formally deduced from \cite[Theorem 1.6]{CvdHS} by taking a Lurie tensor product over Tate motives on $\Spec (\Q)$ with Betti sheaves on a point.
    \end{remark}

    \subsection{Acknowledgements}

    The author thanks Paul Sobaje for discussions that helped inspire this project. The author also thanks Pramod Achar for helpful discussions and comments on an earlier draft.
    Finally, the author is grateful to Thibaud van den Hove for explaining the connections between this work and their recently completed motivic version joint with Robert Cass and Jakob Scholbach \cite{CvdHS}.

    The author was partially supported by NSF Grant No. DMS-2231492.

    \section{Affine Grassmannians and Affine Flag Varieties}
        \subsection{Notation}

    Choose a Borel subgroup $B^{-} \subset G$ and let $T \subset B^{-}$ denote its maximal torus.
    We denote by the Borel subgroup opposite to $B^{-}$ with respect to $T$ by $B^+$, and let $U^{+}$ be its unipotent radical.
    Let $\Phi$ be the root system of $G$, and let $\Phi^+ \subset \Phi$ be the set of positive roots consisting of $T$-weights in $\Lie (U^+)$.
    We also write $\Phi_s \subset \Phi$ for the subset of simple roots.
    Let $\bfX = X^* (T)$ denote the set of weights of $T$, $\bfX^+$ the set of dominant weights (with respect to $\Phi^+$).
    We also consider the coweight lattice $\bfXv = X_* (T)$, the dominant coweights $\bfXv^{+} \subset \bfXv$, and the strictly dominant coweights $\bfXv^{++} \subset \bfXv^{+}$.
    Let
    \[\rho = \frac{1}{2}\sum_{\alpha \in \Phi^+} \alpha \in \Q \otimes_{\Z} \bfX.\]
    For each $\alpha \in \Phi_s$, we fix a pinning $u_{\alpha} : U_{\alpha} \stackrel{\sim}{\to} \G_a$, where $U_{\alpha}$ is the root subgroup of $G$ associated to $\alpha$.

    We will assume that there exists $\zeta \in \bfXv$ such that $\langle \zeta, \alpha \rangle = 1$ for any simple root $\alpha \in \Phi_s$. In particular, $\bfXv^{++} = \zeta + \bfXv^+$.
    The requirement that $\zeta$ exists is relatively weak and completely inconsequential from the perspective of the representation theory of $G$.
    Namely, such a cocharacter always exists if $G$ is semisimple and of adjoint type. We will fix $\zeta$ throughout the paper.

    Let $W_f$ be the (finite) Weyl group of $(G,T)$. Define the affine and extended affine Weyl groups by $W_{\aff} \coloneq W \ltimes \Z \Phi^{\vee}$ and $W \coloneq W_f \ltimes \bfXv$, respectively.
    The affine Weyl group is a Coxeter group whereas the extended affine Weyl group is not. 
    Nonetheless, the length function $\ell$ on $W_{\aff}$ extends to a length function on $W$.
    The key obstruction to $W$ being a Coxeter group is the presence of length 0 elements, denoted by $\Omega$.
    The multiplication induces a group isomorphism $W_{\aff} \rtimes \Omega \stackrel{\sim}{\to} W$.
    Similarly, the Bruhat order on $W_{\aff}$ extends to a partial order on $W$.

    \subsection{Geometry}

    The loop group is the ind-scheme $\L G : \Sch^{\op} \to \textnormal{Set}$ defined by $\L G (X) = X (\C((z)))$.
    We can also consider the positive loop group $\L^+ G : \Sch^{\op} \to \textnormal{Set}$ defined by $\L^+ G (X) = X (\C[[z]])$ which is a group subscheme of $\L G$.
    Let $I^{-} \subset \L^+ G$ denote the Iwahori subgroup associated with $B^{-}$, i.e., the subgroup defined by
    \[I^{-}  = \{g \in \L^+ G \mid g\vert_{z=0} \in B^{-} \}. \]
    We also denote by $I^+$ the opposite Iwahori subgroup of $I^{-}$. The pro-unipotent radical of $I^{-}$ (resp. $I^{+}$) will be denoted by $I_u^+$ (resp. $I_u^-$).
    We can now define the affine Grassmannian as the fppf-sheaf quotient
    \[\Gr \coloneq \L G/\L^+ G,\]
    which is represented by an ind-variety.
    
    Any $\lambda \in \bfXv$ defines a point $z^{\lambda} \in \L T \subset \L G$, and hence a point on the affine Grassmannian $L_{\lambda} \coloneq z^{\lambda} \L^ +G / \L^+ G \in \Gr$.
    The Bruhat decomposition yields a decomposition of $\Gr$ into $\L^+ G$-orbits,
    \[\Gr = \bigsqcup_{\lambda \in \bfXv^{+}} \Gr^{\lambda}\]
    where $\Gr^{\lambda} \coloneq \L^+ G \cdot L_{\lambda}$.
    We set $j_{\lambda} : \Gr^{\lambda} \hookrightarrow \Gr$.
    Each $\Gr^{\lambda}$ is smooth and simply connected, moreover, we have that $\dim \Gr^{\lambda} = \langle 2\rho, \lambda \rangle$ (see \cite{Lu1}).
    The closures of $\L^+ G$-orbits are governed by the partial order on dominant coweights,
    \[\overline{\Gr^{\lambda}} = \bigcup_{\mu \preceq \lambda} \Gr^{\mu}.\]

    For $\lambda \in \bfXv$, we will write $P_{\lambda} \subset G$ for the parabolic subgroup of $G$ containing $B^{-}$ associated with the subset of simple roots $\{ \alpha \in \Phi_s \mid \langle \alpha, \lambda \rangle = 0\}$.
    Note that $P_{\lambda}$ is the stabilizer in $G$ of $L_{\lambda}$, so that we have a canonical isomorphism $G/P_{\lambda} \stackrel{\sim}{\to} G \cdot L_{\lambda}$.
    The map
    \[p_{\lambda} : \Gr^{\lambda} \to G/P_{\lambda}, \qquad x \mapsto \lim_{t \to 0} t \cdot x\]
    where $\G_m$-acts on $\Gr$ by loop rotation is a morphism of algebraic varieties. Moreover, $p_{\lambda}$ realizes $\Gr^{\lambda}$ as an affine bundle over $G/P_{\lambda}$ (cf., \cite[Lemme 2.3]{NP}). 

    For $\lambda \in \bfXv^{+}$ we set
    \[\Gr_{\lambda}^{+} \coloneq I^+ \cdot L_{\lambda} \subset \Gr.\]
    These are locally closed subvarieties and there is a decomposition
    \[\Gr = \bigsqcup_{\lambda \in \bfXv^{+}} \Gr_{\lambda}^{+}.\]
    We will write $j_{\lambda}^{+} : \Gr_{\lambda}^{+} \hookrightarrow \Gr$ for the inclusion map. Note that $\Gr_{\lambda}^+$ is dense in $\Gr^{\lambda}$. As a result, we have that 
    \[\dim (\Gr_{\lambda}^+) = \langle \lambda, 2\rho \rangle\]
    and 
    \[\overline{\Gr_{\lambda}^+} = \bigcup_{\mu \preceq \lambda} \Gr_{\mu}^+. \]

    \subsection{Constructible Sheaves}

   Let $X$ be an ind-scheme of ind-finite type over $\C$.
   We will write $D_{\cons}^b (X, \k)$ for the bounded derived category of constructible sheaves on $X$ with respect to the analytic topology on $X$.
    Let $H$ be a group scheme of the form $H = \ilim H_k$ for some linear algebraic groups $H_k$.
    We denote the bounded derived category of $H$-equivariant constructible sheaves by $D_H^b (X, \k)$. 
    To ensure this is well-defined (cf., \cite{Ga01}), we assume the action is ``nice,'' i.e., every closed subscheme $Z$ of $X$ is contained in a larger closed subscheme $Z'$ such that $Z'$ is $H$-stable and the action of $H$ on $Z'$ factors through some $H_k$.
    We will write $P_H (X, \k)$ for the heart of the perverse $t$-structure on $D_H^b (X, \k)$. 
    For simplicity, all the derived categories of sheaves in this paper are triangulated; although, our arguments are equally valid when working with stable $\infty$-categories as well.

    We are primarily interested in the constructible derived category $D_c^b (\Gr, \k)$ and its equivariant variations. 
    Let $\lambda \in \bfXv^+$.
    We have three $\L^+ G$-equivariant perverse sheaves associated with $\lambda$:
    \[ \scrJ_! (\lambda, \k) \coloneq {}^p H^0 (j_{\lambda !} \uk_{\Gr^{\lambda}} [\langle \lambda, 2\rho \rangle]), \quad \scrJ_* (\lambda, \k) \coloneq {}^p H^0 (j_{\lambda *} \uk_{\Gr^{\lambda}} [\langle \lambda, 2\rho \rangle]), \quad \scrJ_{!*} (\lambda, \k) \coloneq j_{\lambda !*} \uk_{\Gr^{\lambda}} [\langle \lambda, 2\rho \rangle].\]
    Note that when $\k$ is a field, $ \scrJ_{!*} (\lambda, \k)$ is a simple perverse sheaf.
    We will recall some well-known facts about these sheaves.

    \begin{lemma}[\!\!{\cite[Proposition 8.1]{MV}}]\label{lem:eos_and_spherical}
        Let $\lambda \in \bfXv^+$.
        \begin{enumerate}
            \item  We have isomorphisms
        \[\scrJ_! (\lambda, \k) \cong \k \otimes_{\Z}^L \left( \scrJ_! (\lambda, \Z) \right) \qquad\text{and}\qquad \scrJ_* (\lambda, \k) \cong \k \otimes_{\Z}^L \left( \scrJ_* (\lambda, \Z) \right).\]
            \item There is an isomorphism 
         \[ \DD (\scrJ_! (\lambda, \k)) \cong \scrJ_* (\lambda, \k).\]
    \end{enumerate}
    \end{lemma}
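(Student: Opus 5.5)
The plan is to reduce both statements to properties of the standard objects $j_{\lambda !}\uk_{\Gr^{\lambda}}[\langle \lambda, 2\rho\rangle]$ and $j_{\lambda *}\uk_{\Gr^{\lambda}}[\langle \lambda, 2\rho\rangle]$. The one geometric input is the parity/purity vanishing of Mirković--Vilonen. Write $d_\lambda = \langle \lambda, 2\rho\rangle$. Let $T_\nu$ be the semi-infinite orbits for the opposite unipotent. The weight functors $F_\nu = H^{\bullet}_c(S_\nu\cap -)$ (or $H^\bullet_{T_\nu}(-)$) are exact on perverse sheaves. The key facts are:
\begin{itemize}
\item $S_\nu\cap\Gr^\lambda$ has pure dimension $\langle\rho,\nu+\lambda\rangle$;
\item $H^\bullet_c$ of these intersections is free and concentrated in top degree.
\end{itemize}
From these I will deduce that ${}^pH^i(j_{\lambda!}\uk[d_\lambda])=0$ for $i>0$. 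Since $j_{\lambda!}\uk[d_\lambda]$ lies in ${}^pD^{\le 0}$ automatically (affineness of $j_\lambda$ is not available, but the stratification is even-dimensional and the restriction to $\Gr^\lambda$ is perverse), this makes the truncation $\scrJ_!(\lambda,\k)={}^p\tau_{\ge 0}\, j_{\lambda!}\uk[d_\lambda]$ fit in the triangle ${}^p\tau_{<0}\to j_{\lambda!}\uk[d_\lambda]\to\scrJ_!(\lambda,\k)$. Dually, $\scrJ_*(\lambda,\k)={}^p\tau_{\le 0}\, j_{\lambda*}\uk[d_\lambda]$.

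\medskip
\noindent\textbf{Part (1).} Extension of scalars $\k\otimes^L_\Z(-)$ commutes with $j_{\lambda!}$ and $j_{\lambda*}$ (projection formula, constructibility) and with the constant sheaf. It is right $t$-exact for the perverse $t$-structure, so $\k\otimes^L_\Z\,{}^p\tau_{\ge0}$ and ${}^p\tau_{\ge0}(\k\otimes^L_\Z -)$ agree provided $\k\otimes^L_\Z\scrJ_!(\lambda,\Z)$ is perverse. To check perversity, I test with the exact, conservative fiber functor $F=\bigoplus_\nu F_\nu$. This reduces the claim to showing that $F_\nu(\scrJ_!(\lambda,\Z))$ is a free $\Z$-module concentrated in one degree, which is exactly the MV computation: $F_\nu(\scrJ_!(\lambda,\Z)) = H^{\mathrm{top}}_c(S_\nu\cap\Gr^\lambda,\Z)$, free on the irreducible components (MV cycles). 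Freeness gives $F_\nu(\k\otimes^L\scrJ_!)=\k\otimes F_\nu(\scrJ_!)$ in a single degree. Hence $\k\otimes^L\scrJ_!$ is perverse and equals ${}^pH^0(j_{\lambda!}\uk[d_\lambda])$. The $*$-case is identical using $T_\nu$ and compactly-supported-free duality, or alternatively it follows from (2) together with $\DD$ commuting with base change of free objects.

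\medskip
\noindent\textbf{Part (2).} Verdier duality exchanges $j_{\lambda!}$ and $j_{\lambda*}$. Since $\Gr^\lambda$ is smooth of complex dimension $d_\lambda$, we have $\DD(\uk[d_\lambda])\cong\uk[d_\lambda]$, so $\DD(j_{\lambda!}\uk[d_\lambda])\cong j_{\lambda*}\uk[d_\lambda]$. Duality is $t$-exact for the perverse $t$-structure only when $\k$ is a field. For general $\k$ of finite global dimension it sends ${}^pD^{\le0}$ to ${}^{p^+}D^{\ge0}$. So I must show that $\scrJ_!(\lambda,\k)$ is ``$p$ and $p^+$-perverse'', i.e.\ that $\DD\scrJ_!$ is perverse. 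By (1) it suffices to work over $\Z$. There the condition is that the stalks of the perverse cohomology are torsion-free in the critical degree, which again follows from freeness of $F_\nu(\scrJ_!(\lambda,\Z))$: $F_\nu\circ\DD$ is computed by $T_\nu$ hyperbolic restriction (Braden), and this is the $\Z$-dual of a free module in a single degree. Then $\DD$ applied to the triangle defining $\scrJ_!$ yields a triangle $\DD\scrJ_!\to j_{\lambda*}\uk[d_\lambda]\to\DD({}^p\tau_{<0})$ whose outer terms lie in the correct halves, identifying $\DD\scrJ_!\cong{}^p\tau_{\le0}\,j_{\lambda*}\uk[d_\lambda]=\scrJ_*$.

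\medskip
\noindent\textbf{Main obstacle.} The hard part is the integral freeness of $F_\nu(\scrJ_!(\lambda,\Z))$ and the matching between $!$-truncation and duality over $\Z$ (the $p$ versus $p^+$ issue). I would first try to import it directly from the MV dimension estimates and the vanishing of odd-degree compactly supported cohomology of $S_\nu\cap\overline{\Gr^\lambda}$.
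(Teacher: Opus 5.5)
The paper gives no proof of its own here; it simply cites \cite[Proposition 8.1]{MV}. Your outline is essentially the Mirkovi\'c--Vilonen argument behind that citation:
\begin{enumerate}
\item the weight functors detect perversity;
\item $F_\nu(\scrJ_!(\lambda,\Z))$ is free;
\item uniqueness of truncation triangles handles base change;
\item hyperbolic localization controls $\DD$.
\end{enumerate}
The strategy is right. As written, though, it leans on a false ``key fact'', and two steps you call identical do not go through verbatim.

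\textbf{The false fact.} $H^\bullet_c(S_\nu\cap\Gr^\lambda)$ is not concentrated in top degree in general. Take $G=SL_2$, $\lambda=\alpha^\vee$, $\nu=0$. Then $S_0\cap\overline{\Gr^{\alpha^\vee}}=\{\left(\begin{smallmatrix}1&az^{-1}\\0&1\end{smallmatrix}\right)L_0 : a\in\C\}\cong\A^1$, so $S_0\cap\Gr^{\alpha^\vee}\cong\G_m$ and $H^1_c\neq 0$.

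If your bullet were true, the conservative functor $F$ would force $X=j_{\lambda!}\uk[\langle\lambda,2\rho\rangle]$ itself to be perverse. That fails here: ${}^pH^{-1}(X)$ is the skyscraper at $L_0$.

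The correct input is MV's theorem that $R\Gamma_c(S_\nu,\mathcal A)$ is concentrated in degree $\langle 2\rho,\nu\rangle$ for perverse $\mathcal A$. Apply it to the triangle ${}^p\tau_{\le -1}X\to X\to\scrJ_!(\lambda,\Z)$. Since $R\Gamma_c(S_\nu,{}^p\tau_{\le-1}X)$ lives in degrees $<\langle2\rho,\nu\rangle$, you get $R\Gamma_c(S_\nu,\scrJ_!(\lambda,\Z))\cong H_c^{\langle2\rho,\lambda+\nu\rangle}(S_\nu\cap\Gr^\lambda,\Z)[-\langle2\rho,\nu\rangle]$. Top-degree compactly supported cohomology of a variety of dimension $n$ is automatically free on its $n$-dimensional components. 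Consequently:
\begin{enumerate}
\item your ``main obstacle'' is not an obstacle;
\item no odd-degree vanishing on $S_\nu\cap\overline{\Gr^\lambda}$ is needed;
\item ${}^pH^{i}(X)=0$ for $i>0$ is just right $t$-exactness of $j_{\lambda!}$.
\end{enumerate}

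\textbf{The $*$-half of (1).} This is not ``identical''. $\k\otimes^L_\Z(-)$ is right but not left $t$-exact. Tensoring $\scrJ_*(\lambda,\Z)\to j_{\lambda*}\underline{\Z}[\langle\lambda,2\rho\rangle]\to{}^p\tau_{\ge1}(\cdots)\to$ therefore only gives a monomorphism $\k\otimes^L_\Z\scrJ_*(\lambda,\Z)\hookrightarrow\scrJ_*(\lambda,\k)$. Its cokernel, the Tor-type term ${}^pH^0(\k\otimes^L_\Z{}^p\tau_{\ge 1}(\cdots))$, still has to be shown to vanish.

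Your alternative route does work. $\DD(\k\otimes^L_\Z\mathcal F)\cong\k\otimes^L_\Z\DD\mathcal F$ holds for all constructible $\mathcal F$, with no freeness needed. It is also not circular: your reduction of (2) to $\Z$ uses only the $!$-half of (1) and right $t$-exactness of $\k\otimes^L_\Z$.

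\textbf{Two points in (2).} First, the module you dualize is $H_c^\bullet(T_\nu,\scrJ_!(\lambda,\Z))$, not $F_\nu(\scrJ_!(\lambda,\Z))$. Its freeness follows from the same top-degree computation on $T_\nu\cap\Gr^\lambda$, or from $T_\nu=\dot w_0S_{w_0\nu}$ together with $\L^+G$-equivariance.

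Second, ``outer terms in the correct halves'' needs only $\DD({}^pD^{\le-1})\subseteq{}^pD^{\ge1}$ together with $\DD\scrJ_!(\lambda,\k)\in{}^pD^{\le0}$. The former is a stalk/costalk check valid for any noetherian $\k$, so the $p^+$ language is unnecessary.
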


    \begin{lemma}[\!\!{\cite[Remark 3.2]{BGMRR}}]\label{lem:hom_vanishing_for_spherical}
        Let $\lambda, \mu \in \bfXv^+$. We have
        \[\Ext_{P_{\L^+ G} (\Gr, \k)}^n (\scrJ_! (\lambda, \k), \scrJ_* (\mu, \k) ) \cong \begin{cases} \k & \text{if } n =0 \text{ and } \lambda = \mu; \\ 0 & \text{otherwise.}\end{cases}\]
    \end{lemma}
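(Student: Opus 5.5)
We prove the vanishing of $\Ext^n_{P_{\L^+ G}(\Gr,\k)}(\scrJ_!(\lambda,\k),\scrJ_*(\mu,\k))$ for $(n,\lambda,\mu)\neq(0,\lambda,\lambda)$ by first computing the corresponding Hom-spaces in the equivariant derived category $D^b_{\L^+ G}(\Gr,\k)$. We then transfer the result to Ext groups in the abelian category via a standard highest-weight argument.

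First, we compute derived Homs. Set $d_\lambda=\langle\lambda,2\rho\rangle$. Because $\scrJ_!(\lambda,\k)={}^pH^0(j_{\lambda!}\uk[d_\lambda])$ and $j_{\lambda!}\uk[d_\lambda]$ lies in perverse degrees $\le 0$, there is a morphism $j_{\lambda!}\uk[d_\lambda]\to\scrJ_!(\lambda,\k)$ whose cone sits in perverse degrees $\le -2$. Dually, by Lemma \ref{lem:eos_and_spherical}(2), there is a morphism $\scrJ_*(\mu,\k)\to j_{\mu*}\uk[d_\mu]$ whose cone sits in perverse degrees $\ge 1$. Adjunction gives
\[\Hom_{D}(j_{\lambda!}\uk[d_\lambda],j_{\mu*}\uk[d_\mu][n])=\Hom(\uk, j_\lambda^* j_{\mu*}\uk[n+d_\mu-d_\lambda]),\]
which vanishes unless $\lambda=\mu$, since $j_\lambda^*j_{\mu*}=0$ for disjoint orbits. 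When $\lambda=\mu$ it equals $H^n_{\L^+G}(\Gr^\lambda,\k)=H^n_{P_\lambda}(\pt,\k)$, because $\Gr^\lambda$ is an affine bundle over $G/P_\lambda$. The truncation cones only affect $\Hom^n$ with $n\ge 1$ or low degree, so $\Hom^0$ and $\Hom^{<0}$ behave as required in degrees $0,1$. Higher degrees are not controlled this way, which is why the realization step below is needed.

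Second, we pass to the abelian category. $\Ext^1$ in the heart agrees with $\Hom_D^1$, and the previous step shows that $\Ext^1(\scrJ_!(\lambda),\scrJ_*(\mu))=0$ by a parity and stratification argument. Together with $\Hom$ vanishing for $\lambda\neq\mu$ and $\Hom=\k$ for $\lambda=\mu$, this is exactly the input for the BGG-reciprocity style criterion of highest weight categories. We also use that the $\scrJ_!(\lambda,\k)$ are projective in the Serre subcategory supported on $\overline{\Gr^\lambda}$ for perverse sheaves. Concretely, $\Ext^1(\scrJ_!(\lambda),\mathcal F)=0$ whenever $\mathcal F$ is supported on $\overline{\Gr^\lambda}$. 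This follows because $\Ext^1(j_{\lambda!}\uk[d_\lambda],\mathcal F)=H^1$ of $j_\lambda^!\mathcal F$ shifted, and $j_\lambda^!\mathcal F$ sits in degrees $\ge -d_\lambda$, while $\Gr^\lambda$ is simply connected. Simple connectivity forces every $\L^+G$-equivariant local system to be constant, so $\Ext^1$ of the constant sheaf vanishes. With projectivity in truncated subcategories and dual injectivity of $\scrJ_*$, the standard dévissage by induction on the poset of orbits gives the vanishing of $\Ext^n$ for all $n\ge 1$.

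The main obstacle is this projectivity statement over a general ring $\k$. It needs the parity vanishing of equivariant stalks, which is delicate, together with freeness of $H^*_{P_\lambda}(\pt)$ in even degrees. I would first reduce to $\k=\Z$ using Lemma \ref{lem:eos_and_spherical}(1), and then to fields by the universal coefficient theorem. For fields, the argument in \cite{MV} applies: weight-functor exactness shows that $\scrJ_!(\lambda,\k)$ is the pro-projective cover in truncated subcategories.
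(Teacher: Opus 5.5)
There is a genuine gap: the passage from your degree-$\le 1$ information to $\Ext^n = 0$ for all $n \geq 2$.

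Your analysis in degrees $\le 1$ is fine. So is the projectivity of $\scrJ_!(\lambda,\k)$ in $P_{\L^+ G}(\overline{\Gr^\lambda},\k)$: for $\scrF$ supported on $\overline{\Gr^\lambda}$ one has $j_\lambda^!\scrF=j_\lambda^*\scrF\cong\underline{M}[\langle\lambda,2\rho\rangle]$, so $\Ext^1(\scrJ_!(\lambda,\k),\scrF)$ embeds into $H^1_{\L^+ G}(\Gr^\lambda;M)=0$. No parity input is needed, so your last paragraph misplaces the difficulty. The problem is the sentence claiming that ``the standard d\'evissage'' then gives $\Ext^n=0$ for all $n\ge 2$.

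Projectivity of the $\Delta$'s and injectivity of the $\nabla$'s in the truncated Serre subcategories, together with $\Hom$- and $\Ext^1$-orthogonality, do not imply vanishing of higher Ext. In general, Yoneda Ext in a Serre subcategory agrees with Ext in the ambient category only in degrees $\le 1$. For example, take the radical-square-zero algebra with quiver $1\rightleftarrows 2$, order $1<2$, and $\Delta(1)=\nabla(1)=L_1$, $\Delta(2)=P(2)$, $\nabla(2)=I(2)$. Every property you invoke holds, yet $\Ext^2(L_1,L_1)\neq 0$. This is why the usual criterion for a highest weight category (as in \cite{BGS}) requires $\Ext^2(\Delta,\nabla)=0$ as a separate hypothesis.

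Nor can the derived category supply this vanishing. $\Ext^2$ in the heart only injects into $\Hom_{D^b_{\L^+ G}(\Gr,\k)}(-,-[2])$, and these groups are genuinely nonzero here. Already $\Hom(\scrJ_!(0,\k),\scrJ_*(0,\k)[n])=H^n_{\L^+ G}(\pt;\k)$, while the abelian $\Ext^n$ vanishes. This is exactly where the spherical case differs from the Iwahori--Whittaker one: Step 3 of Lemma \ref{lem:existence_of_projs_for_IC} uses the derived vanishing (\ref{eq:hom_std_costd}), which has no spherical analogue.

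Your closing reductions do not repair this. Injectivity of $\scrJ_*(\mu,\k)$ in the truncated category fails unless $\k$ is a field: for $\mu=0$ and $\k=\Z$ that category is $\mod{\Z}^{\fg}$, and $\scrJ_*(0,\Z)=\Z$ is not injective there. Also, ``reduce to $\Z$, then to fields by universal coefficients'' has no meaning for Yoneda Ext in $P_{\L^+ G}(\Gr,\k)$ unless you already have projective resolutions compatible with extension of scalars.

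What is missing is an input that controls higher Ext inside the abelian category. For instance, for every finite closed union $Z$ of orbits you would need a projective generator of $P_{\L^+ G}(Z,\k)$ that admits a filtration by objects $\scrJ_!(\nu,\k)\otimes_\k(\text{free})$ and is compatible with base change. Dimension shifting and passing to the colimit over $Z$ would then finish the argument.

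The paper does not prove the lemma itself; it cites \cite[Remark 3.2]{BGMRR}, which rests on exactly this highest weight structure of the Satake category. The most direct route is through geometric Satake: there $\scrJ_!(\lambda,\k)$ and $\scrJ_*(\lambda,\k)$ become Weyl and induced modules of $G^\vee_\k$ \cite{MV}, and the claim is the classical Ext-orthogonality coming from Kempf vanishing.
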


    \subsection{Convolution}

    Consider the diagram
    \[\begin{tikzcd}
    \Gr \times \Gr & \L G \times \Gr \arrow[l, "p"'] \arrow[r, "q"] & \L G \times^{\L^+ G} \Gr \arrow[r, "m"] & \Gr
    \end{tikzcd}\]
    where $p,q$ are quotient morphisms, and $m$ is induced by the $\L G$ action on $\Gr$.
    For $H \subseteq \L^+ G$, we will define a bifunctor
    \[(-) \star (-) : D_H^b (\Gr, \k) \times D_{\L^+ G}^b (\Gr, \k) \to D_H^b (\Gr, \k)\]
    as follows. Consider the action of $H \times \L^+ G$ on $\L G \times \Gr$ defined by
    \[(g_1,g_2) \cdot (h_1, h_2 \L^+ G) = (g_1 h_1 g_2^{-1}, g_2 h_2 \L^+ G). \]
    The functor $q^*$ induces an equivalence of categories
    \[D_{H}^b (\L G \times^{\L^+ G} \Gr, \k) \stackrel{\sim}{\to} D_{H \times \L^+ G}^b (\L G \times \L^+ G, \k).\]
    Hence, there exists a unique object $\scrF \tilde{\boxtimes} \scrG$ such that
    \[q^* (\scrF \tilde{\boxtimes} \scrG) = p^* (\scrF \boxtimes \scrG).\]
    Then the convolution product of $\scrF$ and $\scrG$ is defined by
    \[\scrF \star \scrG \coloneq m_* (\scrF \tilde{\boxtimes} \scrG).\]
    This construction makes $D_H^b (\Gr, \k)$ into a right $D_{\L^+ G}^b (\Gr, \k)$-module.
    In the special case where $H = \L^+ G$, convolution makes $D_{\L^+ G}^b (\Gr, \k)$ into a monoidal category.

    \begin{lemma}\label{lem:av_and_convolution}
        Let $H_1 \subseteq H_2 \subseteq \L^+ G$. The following diagrams commute up to natural isomorphism
        \[\begin{tikzcd}[column sep=small]
    {D_{H_1}^b (\Gr, \k) \times D_{\L^+ G}^b (\Gr, \k)} \arrow[d, "{\Av_{H_1, ?}^{H_2} \times \id}"'] \arrow[r, "\star"] & {D_{H_1}^b (\Gr, \k)} \arrow[d, "{\Av_{H_1, ?}^{H_2}}"] &  & {D_{H_2}^b (\Gr, \k) \times D_{\L^+ G}^b (\Gr, \k)} \arrow[r, "\star"] \arrow[d, "\For_{H_2}^{H_1} \times \id"'] & {D_{H_2}^b (\Gr, \k)} \arrow[d, "\For_{H_1}^{H_2}"] \\
    {D_{H_2}^b (\Gr, \k) \times D_{\L^+ G}^b (\Gr, \k)} \arrow[r, "\star"]                                               & {D_{H_2}^b (\Gr, \k)}                                   &  & {D_{H_1}^b (\Gr, \k) \times D_{\L^+ G}^b (\Gr, \k)} \arrow[r, "\star"]                                           & {D_{H_1}^b (\Gr, \k)}                              
    \end{tikzcd}\]
        where $? \in \{*,!\}$.
    \end{lemma}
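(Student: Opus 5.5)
The second square is the easier one, so I will treat it first. Write $\For$ for the forgetful functor in both places and $a : H_1 \to H_2$ for the inclusion. The convolution diagram
\[\Gr \times \Gr \xleftarrow{p} \L G \times \Gr \xrightarrow{q} \L G \times^{\L^+ G} \Gr \xrightarrow{m} \Gr\]
is equivariant for the left action of $H_2$ on the first factor. The maps $p$, $q$, $m$ and the twisted product $\scrF \tilde{\boxtimes} \scrG$ are therefore all defined in the $H_2$-equivariant derived categories, and restricting to $H_1$ commutes with each operation. Concretely, the following commute with $\For_{H_1}^{H_2}$:
\begin{itemize}
\item $p^*$, $q^*$ and external tensor product, since forgetful functors commute with pullbacks;
\item the characterization of $\scrF \tilde{\boxtimes} \scrG$ via the equivalence induced by $q^*$;
\item $m_*$, by the standard compatibility of forgetful functors with pushforward in equivariant derived categories.
\end{itemize}
Thus $\For(\scrF \star \scrG) \cong \For(\scrF) \star \scrG$. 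To make this precise I would use the Bernstein--Lunts model for $D_H^b$ via acyclic resolutions, together with the niceness assumption so that everything factors through finite-type quotients $H_k$.

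For the first square I would reduce to a base-change statement. Recall that $\Av_{H_1,*}^{H_2}$ (resp.\ $\Av_{H_1,!}^{H_2}$) is defined via the action map $H_2 \times^{H_1} \Gr \to \Gr$: one takes the induced equivariant sheaf and applies $*$- (resp.\ $!$-) pushforward, up to a shift in the $!$ case. Equivalently, these are the right (resp.\ left, after a shift) adjoints of $\For_{H_1}^{H_2}$; since $H_2/H_1$ is a finite-dimensional variety after passing to finite-type quotients, both adjoints exist and are given by the formula above.

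The plan is to write down the commutative diagram
\[H_2 \times^{H_1} (\L G \times^{\L^+ G} \Gr) \longrightarrow \L G \times^{\L^+ G} \Gr,\]
lying over $H_2 \times^{H_1} \Gr \to \Gr$ via $\mathrm{id} \times^{H_1} m$ and $m$. I would then check three things:
\begin{itemize}
\item Inducing $\scrF \tilde{\boxtimes} \scrG$ along $H_2 \times^{H_1}$ gives $(\text{induced } \scrF) \tilde{\boxtimes} \scrG$, which follows from the uniqueness characterization via $q^*$.
\item The square with $\mathrm{id} \times^{H_1} m$ and $m$ commutes; since $m$ is $H_2$-equivariant, one can identify $H_2 \times^{H_1}(\L G \times^{\L^+G}\Gr) \cong (H_2\times^{H_1}\L G)\times^{\L^+G}\Gr$.
\item Pushforward along the composite is computable in two orders.
\end{itemize}
For $? = *$, functoriality $(f\circ g)_* = f_* g_*$ gives the isomorphism directly. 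For $? = !$, I also need $m_! = m_*$, which holds because $m$ is ind-proper (its fibres are closed in $\Gr$ and everything is supported on finite-type closed pieces).

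Alternatively, I could deduce the first square from the second by adjunction. Since $(-)\star\scrG$ commutes with $\For$, its left and right adjoints commute with $\Av_!$ and $\Av_*$ respectively. This requires $(-)\star\scrG$ to have adjoints, given by convolution with $\DD$ of the inverse-pulled-back sheaf, which is standard but takes more care, so I prefer the direct base-change argument.

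I expect the main obstacle to be purely technical bookkeeping. One has to make sense of averaging and convolution for pro-algebraic $H_i$ and ind-schemes, and keep track of the shift $2\dim(H_2/H_1)$ in the $!$-averaging. The shift is harmless because it appears identically on both sides. The substantive inputs are only ind-properness of $m$ and smooth or proper base change on finite-type truncations.
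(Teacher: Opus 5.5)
Your proposal is correct and follows essentially the same approach as the paper. The paper's proof is a one-line appeal to routine base change using that $m$ is proper and $p,q$ are smooth, and your induction square (with $m_! = m_*$ for the $!$-case), together with the compatibility of $\For$ with pullbacks and pushforwards, spells this out. The one step you leave implicit is that averaging on $\L G \times^{\L^+ G} \Gr$ commutes with $(-)\tilde{\boxtimes}\scrG$; that is where smooth base change along $p$ and $q$ enters.
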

    \begin{proof}
        The natural isomorphism can be deduced from a routine base change argument using the fact that $m$ is proper and $p,q$ are smooth.
    \end{proof}

    \begin{lemma}\label{lem:H_conv_t_exactness}
        Assume that $\k$ is a field. Let $H \subseteq \L^+ G$. Then
        \[(-) \star (-) : D_H^b (\Gr, \k) \times D_{\L^+ G}^b (\Gr, \k) \to D_H^b (\Gr, \k)\]
        is perverse $t$-exact.
    \end{lemma}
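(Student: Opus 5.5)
We reduce to the case $H = \L^+ G$. By Lemma \ref{lem:av_and_convolution}, convolution commutes with the forgetful functor $\For_{H}^{\L^+ G}$ in the first variable. Moreover, the forgetful functor from $H$-equivariant to non-equivariant (or less equivariant) sheaves is perverse $t$-exact and conservative on hearts, since perversity is detected after forgetting equivariance. It therefore suffices to prove that $\scrF \star \scrG$ is perverse whenever $\scrF \in P_H(\Gr,\k)$ and $\scrG \in P_{\L^+ G}(\Gr,\k)$, with the perversity checked in $D_{\cons}^b(\Gr,\k)$. Exactness in the first variable, for fixed $\scrG$, then follows formally: the functor is triangulated and sends the heart to the heart. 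The same holds in the second variable.

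The main step is a stalk/costalk dimension estimate on the twisted product, as in the standard proof of Mirković--Vilonen \cite{MV} and of the convolution exactness in \cite{ABBGM}. Let $\scrG \in P_{\L^+ G}(\Gr,\k)$. Since $\k$ is a field, $\scrG$ has a finite filtration with subquotients of the form $\scrJ_{!*}(\lambda,\k)$, or more crudely $\scrG$ is supported on finitely many $\Gr^\lambda$ with constant cohomology sheaves along each. The twisted product $\scrF \tilde{\boxtimes} \scrG$ is perverse on $\L G \times^{\L^+ G} \Gr$: locally (smooth-locally via $p,q$) it is an external product of perverse sheaves, and $\boxtimes$ over a field is $t$-exact. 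It is then stratified by the preimages $m^{-1}$-compatible pieces $\Gr \tilde{\times} \Gr^\lambda$.

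The key input is that $m$ restricted to $\overline{\Gr^\mu}\tilde{\times}\overline{\Gr^\lambda}$ is stratified semismall. Concretely, for any point $y \in \Gr$, the fibre of $m$ over $y$, intersected with $\Gr^{\mu'}$-stratum in the first factor and $\Gr^{\lambda}$ in the second, has dimension at most $\tfrac12(\dim$ source stratum $- \dim$ target piece$)$. This follows from the Mirković--Vilonen dimension estimates for intersections of semi-infinite orbits with $\L^+ G$-orbits, which give $\dim(\Gr^\lambda \cap z^{\nu}\,\cdot\,\text{appropriate orbit}) \le \langle\rho,\lambda+\nu\rangle$.

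The obstacle is that $\scrF$ is only $H$-equivariant and not $\L^+G$-equivariant, so the stratification of the first factor is not by $\L^+ G$-orbits. To handle this, we use semismallness with respect to the second variable only. Locally over $\Gr$, the map $m$ restricted to $\Gr \tilde{\times} \overline{\Gr^\lambda}$ is isomorphic to the projection $\Gr \times \overline{\Gr^\lambda} \to \Gr$ twisted by the action, via $(x,y)\mapsto (m(x,y), x)$. The fibre over a point is a copy of $\overline{\Gr^{\lambda}}$-type varieties $\{x : x^{-1}y \in \Gr^\nu\}$. Perversity of $m_*$ is then checked via the estimate $\dim \{x\in \Gr^{\mu}\text{-stratum of }\scrF : \ldots\}$. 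Since this is awkward, I would instead argue differently. The functor $\scrF \mapsto \scrF\star\scrG$ is right exact because $m$ is proper, and $\scrF\star\scrG \cong \scrF \star \DD(\DD\scrG)$ with $\DD(\scrF\star\scrG)\cong \DD\scrF \star \DD\scrG$. Hence it suffices to show right $t$-exactness, i.e. ${}^pD^{\le 0}$ is preserved. Right $t$-exactness reduces by the filtration of $\scrG$ to $\scrG = \scrJ_!(\lambda,\k)$, and by smooth base change along $\L G \to \Gr$ to the statement that $a_!$ along the $\Gr^\lambda$-fibration of relative dimension $\langle 2\rho,\lambda\rangle$ with fibres $\Gr^\lambda$ (affine bundles over $G/P_\lambda$) has cohomological amplitude bounded by $\langle 2\rho,\lambda\rangle$ in the appropriate direction. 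The expected difficulty is precisely making this amplitude bound rigorous using the MV semi-infinite estimates, which I would cite rather than reprove.
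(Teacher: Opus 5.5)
Your reduction step matches the paper's. The perverse $t$-structure on $D_H^b(\Gr,\k)$ is transported along $\For^H$, so Lemma~\ref{lem:av_and_convolution} reduces the statement to the non-equivariant case $H=1$. Your first sentence says you reduce to $H=\L^+G$, which is backwards, since forgetful functors only go from $\L^+G$ down to $H$; but what you actually carry out is the reduction to $H=1$. The genuine gap is that the case $H=1$ is the entire content of the lemma, and your proposal does not establish it. That case is the perversity of $\scrF\star\scrG$ for an arbitrary perverse $\scrF\in D^b_{\cons}(\Gr,\k)$ and $\scrG\in P_{\L^+G}(\Gr,\k)$. The paper does not reprove it either: it cites \cite[Lemma 2.3]{BGMRR}, which is exactly this non-equivariant statement.

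The route you sketch for $H=1$ does not work as stated. First, $\scrF\mapsto\scrF\star\scrG$ is not right $t$-exact ``because $m$ is proper''. Properness gives $m_!=m_*$, hence $\DD(\scrF\star\scrG)\cong\DD\scrF\star\DD\scrG$; this legitimately reduces you to right $t$-exactness, but it does not supply it. Second, reduce to $\scrG=j_{\lambda!}\uk_{\Gr^\lambda}[\langle\lambda,2\rho\rangle]$; these, rather than the $\scrJ_!(\lambda,\k)$, are the natural generators of ${}^pD^{\le 0}$. Then identify $\L G\times^{\L^+G}\Gr\cong\Gr\times\Gr$, so that $m$ restricted to $\L G\times^{\L^+G}\Gr^\lambda$ becomes a smooth map whose fibres are translates of an $\L^+G$-orbit of dimension $\langle\lambda,2\rho\rangle$. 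The only formal bound this gives is $\scrF\star\scrG\in{}^pD^{\le\langle\lambda,2\rho\rangle}$. Improving it to ${}^pD^{\le 0}$ amounts to the semismallness estimate
\[\dim\{y\in\Gr : \dim\bigl(m^{-1}(y)\cap(Z\tilde{\times}\Gr^{\lambda})\bigr)\ge j\}\le\dim Z+\langle\lambda,2\rho\rangle-2j\]
for every locally closed $Z\subseteq\Gr$ (test against $\scrF$ a shifted constant sheaf on $Z$ extended by zero). Here $Z\tilde{\times}\Gr^{\lambda}$ denotes the preimage of $Z$ under the first projection $\L G\times^{\L^+G}\Gr^{\lambda}\to\Gr$.

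The Mirkovi\'c--Vilonen estimates $\dim(\Gr^\lambda\cap S_\nu)=\langle\lambda+\nu,\rho\rangle$ give this only when $Z$ is a union of $\L^+G$-orbits, i.e.\ stratified semismallness of $m$ on $\overline{\Gr^\mu}\tilde{\times}\overline{\Gr^\lambda}$. That restriction is exactly the obstacle you identified yourself, and for arbitrary $Z$ the estimates say nothing. So the ``amplitude bound'' you propose to cite is not a consequence of those estimates; it is the lemma itself. To close the gap you must invoke the non-equivariant $t$-exactness of convolution with spherical perverse sheaves, as the paper does via \cite[Lemma 2.3]{BGMRR}. That statement is usually proved through a nearby-cycles description of convolution in the spirit of \cite{Ga01}, not by a dimension count.
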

    \begin{proof}
        The perverse $t$-structure on $D_H^b (\Gr, \k)$ is defined by transporting the perverse $t$-structure on $D^b (\Gr, \k)$ along $\For^H$.
        In light of Lemma \ref{lem:av_and_convolution}, it then suffices to prove the lemma for $H = 1$. This case follows from \cite[Lemma 2.3]{BGMRR}.
    \end{proof}

    \begin{remark}
    When $\k$ is not a field, convolution on $D_{\L^+ G}^b (\Gr, \k)$ is not perverse $t$-exact.
    However, as discussed in \cite{MV}, one can still make $P_{\L^+ G} (\Gr, \k)$ into a (symmetric) monoidal category where the product is defined by $\scrF \:^p\!\star \scrG \coloneq {}^p H^0 (\scrF \star \scrG)$.
    \end{remark}

    \section{The Kirillov Model}
        The usual notion of Whittaker sheaves depends on the existence of a multiplicative local system $\Psi$ on $\A^1$ which satisfies $R\Gamma (\Psi) = R\Gamma_c (\Psi) = 0$.
    In the de Rham setting, one can take $\Psi$ to be the exponential $D$-module. Likewise, in the étale setting, one can take $\Psi$ to be an Artin--Schreier sheaf.
    Unfortunately, in the Betti setting, no such local system exists since $\A^1$ is contractible. 
    To remedy this problem, one can take inspiration from Laumon's homogenous Fourier transform \cite{Lau}.
    In this setting, one imposes an additional $\G_m$-equivariance on $\A^1$. There are essentially two different $\G_m$-equivariant sheaves on $\A^1$ that one can consider.
    Let $u : \G_m \to \A^1$ be the obvious inclusion. The sheaves $u_* \uk [1]$ and $u_! \uk [1]$ are known as the Fourier--Laumon kernels.
    Laumon \cite{Lau} proved that they satisfy a version of multiplicativity and that $R\Gamma (u_! \uk [1]) = R\Gamma_c (u_* \uk [1]) = 0$.
    Note that $R\Gamma (u_* \uk [1])$ and  $R\Gamma_c (u_! \uk [1])$ are both nonzero, so the vanishing is not symmetric.
 
    In this section, we will study a variation of Whittaker sheaves for general varieties that philosophically corresponds to looking at $(\A^1/\G_m, u_? \uk [1])$-equivariant sheaves where $? \in \{*,!\}$.
    This gives rise to the \emph{Kirillov model} introduced by Gaitsgory and Lysenko \cite{GaiWhit, GL}. The Kirillov model has the benefit of being defined in the Betti setting with $\Z$-coefficients; however, it does require our varieties to admit an additional $\G_m$-equivariance.
    Unlike the étale and de Rham settings, one has to consider both a $*$- and $!$-version of the Kirillov model corresponding to the lack of symmetry between $u_* \uk [1]$ and $u_! \uk [1]$.
    
    \subsection{Definition and First Properties}\label{subsec:kirillov_def} 
    Let $X$ be an ind-scheme of ind-finite type over $\C$. Let $H$ be a group scheme of the form $H = \ilim H_k$ for some linear algebraic groups $H_k$.
    Assume that $H$ acts nicely on $X$.
    Fix a non-degenerate additive character $\chi : H \twoheadrightarrow \G_a$, and denote its kernel by $K_{\chi}$.
    We will fix a $\G_m$-action on $H$, for example, one given by a cocharacter $\G_m \to H$. 
    We also assume that the $H$-action on $X$ extends to an $H \rtimes \G_m$-action on $X$. Define averaging functors,
    \[\Ind_{!}^{\chi} \coloneq \For_{K_{\chi} \rtimes \G_m}^{H \rtimes \G_m} \Av_{K_{\chi} \rtimes \G_m, !}^{H \rtimes \G_m} : D_{K_{\chi} \rtimes \G_m}^b (X, \k) \to D_{K_{\chi} \rtimes \G_m}^b (X, \k)\]
    \[\Ind_{*}^{\chi} \coloneq \For_{K_{\chi} \rtimes \G_m}^{H \rtimes \G_m} \Av_{K_{\chi} \rtimes \G_m, *}^{H \rtimes \G_m} : D_{K_{\chi} \rtimes \G_m}^b (X, \k) \to D_{K_{\chi} \rtimes \G_m}^b (X, \k).\]
    We define the $!$-\emph{Kirillov model} and $*$-\emph{Kirillov model} for $X$ by
    \[\Kir_{(H \rtimes \G_m, \chi), !} (X, \k) \coloneq \ker (\Ind_{!}^{\chi}) \qquad\text{and}\qquad \Kir_{(H \rtimes \G_m, \chi), *} (X, \k) \coloneq \ker (\Ind_{*}^{\chi}).\]
    The Kirillov model depends on the choice of $\chi$. We often omit the subscript $(H \rtimes \G_m, \chi)$ from the notation when obvious from context.
    We may also occasionally use $?$ in place of $!$ or $*$ in the subscript when discussing results that hold for both variations of the Kirillov model.
    Note that $\Kir_! (X, \k)$ and $\Kir_* (X, \k)$ are full triangulated subcategories of $D_{K_{\chi} \rtimes \G_m}^b (X, \k)$. 
    
    \begin{lemma}\label{lem:t_str_existence}
        The triangulated category $\Kir_? (X, \k)$ admits a $t$-structure obtained by restricting the perverse $t$-structure on $D_{K_{\chi} \rtimes \G_m}^b (X, \k)$.
    \end{lemma}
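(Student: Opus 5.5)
The standard criterion is this: a full triangulated subcategory $\mathcal{C}$ of a triangulated category with a $t$-structure inherits that $t$-structure exactly when $\mathcal{C}$ is stable under the truncation functors $\tau^{\le 0}$ and $\tau^{\ge 1}$. Once that holds, $\mathcal{C}\cap D^{\le 0}$ and $\mathcal{C}\cap D^{\ge 0}$ form a $t$-structure on $\mathcal{C}$. Orthogonality and shift-stability are inherited, and the truncation triangle of an object of $\mathcal{C}$ has all of its terms in $\mathcal{C}$. The plan is therefore to show that if $\scrF \in \ker(\Ind^\chi_?)$, then ${}^p\tau^{\le 0}\scrF$ and ${}^p\tau^{\ge 1}\scrF$ also lie in the kernel.

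The key input is that $\Ind_?^\chi$ is perverse $t$-exact up to a fixed shift, so that it commutes with perverse truncations (after shifting). We would set $d=\dim H\rtimes\G_m/K_\chi\rtimes\G_m = 1$. The forgetful functor $\For_{K_\chi\rtimes\G_m}^{H\rtimes\G_m}$ is $t$-exact by definition, since both perverse $t$-structures are transported from $D^b_{\cons}(X,\k)$. It then suffices to understand $\For\circ\Av_{?}$. After forgetting all equivariance, this composite is computed by the action map $a: H\times^{K_\chi} X \to X$, or more concretely by the quotient $H/K_\chi\cong \G_a$. For $?=!$ it is $a_!a^*$ up to a shift by $2$; for $?=*$ it is $a_*a^!$ up to a shift. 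One realization is as $(\mathrm{pr}_2)_!$ or $(\mathrm{pr}_2)_*$ along $\G_a\times X\xrightarrow{\text{act}} X$, with $\mathrm{pr}_2$ a trivial $\A^1$-fibration, applied to the pullback along the action map.

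Full $t$-exactness is not available here: $\G_a$-averaging $\For\circ\Av_!$ is only right $t$-exact up to shift, and $\For\circ\Av_*$ is only left $t$-exact. The way around this is to exploit that $\scrF$ is already $K_\chi$-equivariant and the $H$-orbits fiber over $\G_a$-orbits. For a $K_\chi\rtimes\G_m$-equivariant $\scrF$, the object $\For\Av_?\scrF$ is the convolution $\scrF \star \delta$-type object, namely the pushforward along $\G_a\times X\to X$ of $\mathrm{act}^*\scrF$. Since $\mathrm{act}$ is smooth of relative dimension $1$, the functor $\mathrm{act}^*[1]$ is $t$-exact. The pushforward ($!$ or $*$) along a smooth $\A^1$-fibration has amplitude $[-1,0]$, respectively $[0,1]$, after the appropriate shift. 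Applying it to a perverse sheaf $P$ with $\Ind_? P$ computed as above, we obtain a long exact sequence. From the truncation triangle ${}^p\tau^{\le 0}\scrF\to\scrF\to{}^p\tau^{\ge1}\scrF$ and the vanishing $\Ind_?\scrF=0$, we get $\Ind_?({}^p\tau^{\le0}\scrF)\cong\Ind_?({}^p\tau^{\ge1}\scrF)[-1]$. Combining the amplitude bounds, the left side lies in $D^{\le 0}$ (say, for $!$), and the right side lies in $D^{\ge 0}$ shifted by one. Hence both are concentrated in a single degree.

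This last degree is the main obstacle, since the amplitude bounds differ by exactly one and a crude count does not close the argument. There are two ways to dispose of it. The first is to show that the relevant extreme perverse cohomology of $\Ind_?$ of a perverse sheaf is the "invariants" of the $\G_a$-action, which is fully faithful-ish. The second, cleaner option is to use that $\Ind_?$ is idempotent-like, $\Ind_?\circ\Ind_?\cong\Ind_?\otimes R\Gamma_?(\G_a)$, together with the $\G_m$-equivariance: contraction shows the unique surviving cohomology is $H^\chi$-averaged and hence must vanish, because it is a summand of $\Ind_?$ of an object killed by $\Ind_?$. Either way, once that degree is handled, both truncations lie in the kernel, and the lemma follows.
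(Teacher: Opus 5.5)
\textbf{Verdict: gap.} Your strategy is the paper's, but you miscount at the last step, conclude that one degree survives, and then try to remove that degree with arguments that do not work.

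\textbf{The correct count.} The common strategy is this. Take the truncation triangle ${}^p\tau^{\le 0}\scrF\to\scrF\to{}^p\tau^{\ge 1}\scrF\to$. Use $\Ind^\chi_?(\scrF)=0$ to get $\Ind^\chi_?({}^p\tau^{\le 0}\scrF)\cong\Ind^\chi_?({}^p\tau^{\ge 1}\scrF)[-1]$. Then use that $\Ind^\chi_?$ has perverse amplitude of width one.

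For $?=!$ you correctly note that $\Ind^\chi_!$ is right $t$-exact and maps ${}^pD^{\ge 0}$ into ${}^pD^{\ge -1}$. So $\Ind^\chi_!({}^p\tau^{\le 0}\scrF)\in{}^pD^{\le 0}$. On the other side, ${}^p\tau^{\ge 1}\scrF\in{}^pD^{\ge 1}$ gives $\Ind^\chi_!({}^p\tau^{\ge 1}\scrF)\in{}^pD^{\ge 0}$, hence $\Ind^\chi_!({}^p\tau^{\ge 1}\scrF)[-1]\in{}^pD^{\ge 1}$. Two isomorphic objects lying in ${}^pD^{\le 0}$ and ${}^pD^{\ge 1}$ are zero. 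So both truncations lie in the kernel immediately, and no degree survives.

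For $?=*$ the same count works. $\Ind^\chi_*$ is left $t$-exact and maps ${}^pD^{\le 0}$ into ${}^pD^{\le 1}$. So $\Ind^\chi_*({}^p\tau^{\le 0}\scrF)[1]\in{}^pD^{\le 0}$ and $\Ind^\chi_*({}^p\tau^{\ge 1}\scrF)\in{}^pD^{\ge 1}$. This is exactly how the paper concludes: width-one amplitude is precisely what the crude count needs.

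\textbf{Why the write-up still fails as a proof.} The two remedies you offer for the phantom degree are not arguments.
\begin{itemize}
\item ``Fully faithful-ish'' is not a mathematical statement.
\item The second option is circular. The object in question is $\Ind^\chi_?$ of a truncation of $\scrF$, not a summand of $\Ind^\chi_?(\scrF)=0$. Idempotence $\Ind^\chi_!\circ\Ind^\chi_!\cong\Ind^\chi_!$ does not force any object in the image of $\Ind^\chi_!$ to vanish.
\end{itemize}

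You should also delete the opening claim that $\Ind^\chi_?$ is $t$-exact up to a shift and commutes with truncations. It is false, and your own next paragraph contradicts it. For a counterexample, take $X=\A^1$, $H=\G_a$ acting by translation, $K_\chi=1$, and $\G_m$ acting by dilation. Then $\Ind^\chi_!$ sends the skyscraper at $0$ to $\uk_{\A^1}[2]$, which sits in perverse degree $-1$. But it fixes $\uk_{\A^1}[1]$, which sits in degree $0$.

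With the count corrected and these passages removed, your proof is complete and coincides with the paper's.
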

    \begin{proof}
        We will just verify the lemma for the $!$-Kirillov model. The $*$-variant can be obtained by a similar argument.
        We must check that for $\scrF \in \Kir_! (X, \k)$ that ${}^p \tau_{\leq 0} \scrF$ and ${}^p \tau_{\geq 0} \scrF$ are both in $\Kir_! (X, \k)$.
        First, observe that $\Ind_!^{\chi}$ is right $t$-exact, i.e., 
        \begin{equation}\label{eq:t_str_existence_1}
                \Ind_!^{\chi} ({}^p D_{K_{\chi} \rtimes \G_m}^b (X, \k)^{\leq 0}) \subseteq {}^p D_{K_{\chi} \rtimes \G_m}^b (X, \k)^{\leq 0} .
        \end{equation}
        Moreover, since $\G_a$ is unipotent of dimension 1, we also have that
        \begin{equation}\label{eq:t_str_existence_2}
             \Ind_!^{\chi} ({}^p D_{K_{\chi} \rtimes \G_m}^b (X, \k)^{\geq 0}) \subseteq {}^p D_{K_{\chi} \rtimes \G_m}^b (X, \k)^{\geq -1} .
        \end{equation}
        We can now consider the distinguished triangle induced from truncation
        \[\Ind_!^{\chi} ({}^p \tau_{\leq 0} \scrF) \to \Ind_!^{\chi} (\scrF) \to \Ind_!^{\chi} ({}^p \tau_{\geq 1}) \to .\]
        The middle term is $0$ by assumption, so there is an isomorphism 
        \[ \Ind_!^{\chi} ({}^p \tau_{\geq 1}) \cong \Ind_!^{\chi} ({}^p \tau_{\leq 0} \scrF) [1] \]
        The left-hand side of this isomorphism is in ${}^p D_{K_{\chi} \rtimes \G_m}^b (X, \k)^{\geq 0}$ by (\ref{eq:t_str_existence_2}).
        The right-hand side of the isomorphism is in ${}^p D_{K_{\chi} \rtimes \G_m}^b (X, \k)^{\leq -1}$ by (\ref{eq:t_str_existence_1}).
        As a result, we conclude that both sides are in fact 0. Hence, ${}^p \tau_{\leq 0} \scrF$ and ${}^p \tau_{\geq 0} \scrF$ are objects in $\Kir_! (X, \k)$.
    \end{proof}

    There are fully faithful $t$-exact functors
    \[\iota_! : \Kir_! (X, \k) \to D_{K_{\chi} \rtimes \G_m}^b (X, \k) \qquad\text{and}\qquad \iota_* : \Kir_* (X, \k) \to D_{K_{\chi} \rtimes \G_m}^b (X, \k)\]
    coming from $\Kir_! (X, \k)$ and $\Kir_* (X, \k)$ being full subcategories of $D_{K_{\chi} \rtimes \G_m}^b (X, \k)$.

    Since there is a natural isomorphism of functors $\DD \circ \Ind_!^{\chi} \cong \Ind_*^{\chi} \circ \DD$, one has that Verdier duality exchanges the $!$- and $*$-Kirillov models. 
    In particular, $\DD$ restricts to an equivalence of categories
    \[\DD : \Kir_! (X, \k)^{\op} \stackrel{\sim}{\to} \Kir_* (X, \k).\]

    \subsection{The Prototypical Example}

    Before we continue with the general theory, we will take a short interlude to focus on $\G_m$-equivariant sheaves on $\A^1$.
    We can define two maps
    \[\begin{tikzcd}
    \G_m \arrow[r, "u", hook] & \A^1 & \pt \arrow[l, "s"', hook']
    \end{tikzcd}\]
    as the obvious inclusion maps.
    The sheaves $u_* \uk [1]$ and $u_! \uk [1]$ are called the \emph{Fourier--Laumon kernels}. By \cite[Lemme 1.4]{Lau}, they satisfy
    \begin{equation}\label{eq:properties_of_FL_kernels}
        R\Gamma_c (u_* \uk [1]) \cong \Av^{\G_a}_! (u_* \uk [1]) = 0 \qquad\text{and}\qquad  R\Gamma (u_! \uk [1]) \cong \Av^{\G_a}_* (u_! \uk [1]) = 0. 
    \end{equation}

    Let $\sigma : \A^2 \to \A^1$ denote the map given by $(x,y) \mapsto x+y$. 
    We can make $\A^2$ into a $\G_m$-variety by $t \cdot (x,y) = (tx,ty)$. Under this action, $\sigma$ is $\G_m$-equivariant.
    There are two monoidal structures on $D_{\G_m}^b (\A^1, \k)$ given by $!$- and $*$-convolution,
    \[(-) \cstar (-) \coloneq \sigma_! \For^{\G_m^2}_{\G_m} (- \boxtimes -) : D_{\G_m}^b (\A^1, \k) \times D_{\G_m}^b (\A^1, \k) \to D_{\G_m}^b (\A^1, \k),\]
    \[(-) \sstar (-) \coloneq \sigma_* \For^{\G_m^2}_{\G_m} (- \boxtimes -) : D_{\G_m}^b (\A^1, \k) \times D_{\G_m}^b (\A^1, \k) \to D_{\G_m}^b (\A^1, \k).\]
    Since $\A^1$ is a commutative group scheme, both of these monoidal structures are symmetric.
    The monoidal unit for both $\cstar$ and $\sstar$ are given by $s_* \k$. 
    Verdier duality induces a symmetric monoidal equivalence of categories
    \[\DD : (D_{\G_m}^b (\A^1, \k)^{\op}, \cstar) \stackrel{\sim}{\to}  (D_{\G_m}^b (\A^1, \k), \sstar).\]

    \begin{lemma}\label{lem:two_out_of_three}
        There are isomorphisms
        \[u_! \uk [1] \cstar u_* \uk [1] \cong u_* \uk [1] \qquad\text{and}\qquad u_* \uk [1] \sstar u_! \uk [1] \cong u_! \uk [1].\]
    \end{lemma}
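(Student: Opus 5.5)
The plan is to prove the first isomorphism directly, using the defining triangle for $u_! \uk$ together with the vanishing (\ref{eq:properties_of_FL_kernels}). The second isomorphism will then follow formally by Verdier duality.

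\emph{Step 1: a triangle for $u_! \uk[1]$.} Consider the open--closed decomposition $\G_m \stackrel{u}{\hookrightarrow} \A^1 \stackrel{s}{\hookleftarrow} \pt$. It gives a distinguished triangle in $D_{\G_m}^b(\A^1,\k)$,
\[ u_! \uk[1] \to \uk_{\A^1}[1] \to s_* \k[1] \to . \]
Set $\scrF = u_* \uk[1]$. Since $(-)\cstar \scrF$ is a triangulated functor, convolving gives
\[ u_! \uk[1] \cstar \scrF \to \uk_{\A^1}[1] \cstar \scrF \to s_*\k[1] \cstar \scrF \to . \]
Because $s_*\k$ is the monoidal unit, the third term is $\scrF[1]$.

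\emph{Step 2: the middle term vanishes.} This is the key computation and the only place where care is needed. Let $a : \A^1 \to \pt$ be the structure map. I will show
\[ \uk_{\A^1} \cstar \scrF \cong a^* a_! \scrF \]
$\G_m$-equivariantly. By definition, $\uk_{\A^1} \cstar \scrF = \sigma_! \For^{\G_m^2}_{\G_m}(p_2^* \scrF)$, where $p_2 : \A^2 \to \A^1$ is the second projection. The shear map $\theta(x,y) = (x+y, y)$ is an automorphism of $\A^2$ commuting with the diagonal $\G_m$-action, and it satisfies $\sigma = p_1 \circ \theta$ and $p_2 = p_2 \circ \theta$. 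Hence
\[ \sigma_! p_2^* \scrF \cong p_{1!}\, \theta_!\, \theta^* p_2^* \scrF \cong p_{1!}\, p_2^* \scrF . \]
Proper base change for the cartesian square formed by $p_1, p_2, a, a$ then gives $p_{1!}\, p_2^* \scrF \cong a^* a_! \scrF$. All of these maps are $\G_m$-equivariant for the diagonal action, so base change holds in the equivariant derived category.

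By (\ref{eq:properties_of_FL_kernels}), $a_! \scrF = R\Gamma_c(u_*\uk[1]) = 0$. This vanishing is also equivariant, because $\For$ to the non-equivariant category is conservative. Therefore the middle term of the triangle in Step 1 is zero.

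\emph{Step 3: conclusion and duality.} With the middle term zero, the triangle in Step 1 reads $u_!\uk[1]\cstar\scrF \to 0 \to \scrF[1] \to$. Rotating it gives
\[ u_!\uk[1]\cstar u_*\uk[1] \cong \scrF[1][-1] = u_*\uk[1], \]
which is the first isomorphism.

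For the second isomorphism, apply the symmetric monoidal equivalence $\DD : (D_{\G_m}^b(\A^1,\k)^{\op},\cstar) \to (D_{\G_m}^b(\A^1,\k),\sstar)$. Since $u$ is smooth of relative dimension zero, $\DD(u_!\uk[1]) \cong u_*\DD(\uk[1]) \cong u_*\uk[1]$, because $\DD\uk_{\G_m} = \uk[2]$. Similarly, $\DD(u_*\uk[1]) \cong u_!\uk[1]$. Applying $\DD$ to the first isomorphism therefore yields $u_*\uk[1]\sstar u_!\uk[1] \cong u_!\uk[1]$.

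I expect the main obstacle to be Step 2: justifying the shear change of variables and the base change while keeping track of the $\For^{\G_m^2}_{\G_m}$ in the definition of $\cstar$. This is resolved by observing that $\theta$, $p_1$, $p_2$ and $a$ are all equivariant for the diagonal $\G_m$.
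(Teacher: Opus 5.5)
Your proposal is correct and is essentially the paper's proof. Both use the excision triangle for $u_!\uk[1]$ (yours is a rotation of the paper's), convolve with $u_*\uk[1]$, kill the $\uk_{\A^1}[1]\cstar u_*\uk[1]$ term using $R\Gamma_c(u_*\uk[1])=0$, and obtain the second isomorphism from Verdier duality. Your shear-and-base-change computation $\uk_{\A^1}\cstar\scrF\cong a^*a_!\scrF$ simply spells out the paper's one-line identification of that term with $\Av^{\G_a}_!(u_*\uk)$.
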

    \begin{proof}
        We will just prove the first isomorphism. The second follows from Verdier duality.
        There is an excision triangle
        \[s_* \k \to u_! \uk [1] \to \uk_{\A^1} [1] \to.\]
        We can apply $(-) \cstar u_* \uk [1]$ to produce a distinguished triangle
        \[u_* \uk [1] \to u_! \uk [1] \cstar u_* \uk [1] \to \uk_{\A^1} [1] \cstar u_* \uk [1] \to.  \]
        After forgetting the $\G_m$-equivariance, the last term can be identified with $\Av^{\G_a}_! (u_* \uk)$ which vanishes by (\ref{eq:properties_of_FL_kernels}).
        Therefore, $u_* \uk [1] \to u_! \uk [1] \cstar u_* \uk [1]$ is an isomorphism.
    \end{proof}

    \begin{example}\label{ex:kir_on_A1}
        We may consider the Kirillov model for $\A^1$. We claim that $\Kir_! (\A^1, \k)$ is generated as a triangulated category by $u_* \uk [1]$.
        Indeed, from (\ref{eq:properties_of_FL_kernels}), one has that $u_* \uk [1]$ is in $\Kir_! (\A^1, \k)$.
        On the other hand, suppose that $\scrF \in \Kir_! (\A^1, \k)$ and consider the excision triangle
        \[s_* s^! \scrF \to \scrF \to u_* u^* \scrF \to \] 
        It follows from (\ref{eq:properties_of_FL_kernels}) that $u_* u^* \scrF$ is also in $\Kir_! (\A^1, \k)$.
        In particular, we have isomorphisms
        \[0 = R\Gamma_c (\scrF) \cong R\Gamma_c (s_* s^! \scrF) \cong s^! \scrF.\]
        Therefore, $\scrF \cong u_* u^* \scrF$ which proves our claim.

        As a consequence, the functor
        \[ R\Hom (u_* \uk [1], -) : \Kir_! (\A^1, \k) \to D^b (\mod{\k})\]
        is a $t$-exact equivalence of categories.

        A symmetric argument shows that $\Kir_* (\A^1, \k)$ is generated $u_! \uk [1]$.
    \end{example}

    \subsection{Averaging and Translation Functors}

    We return to the general setting from \S\ref{subsec:kirillov_def}.
    We can make $D_{K_{\chi} \rtimes \G_m}^b (X, \k)$ into a module over $(D_{\G_m}^b (\A^1, \k), \cstar)$ and $(D_{\G_m}^b (\A^1, \k), \sstar)$  using both $*$- and $!$-convolution.
    Let $a : H \times X \to X$ denote the action map.
    Note that the pullback along $\chi$ defines an equivalence of categories $\chi^* : D_{\G_m}^b (\A^1, \k) \stackrel{\sim}{\to} D_{K_{\chi} \rtimes \G_m} (H, \k)$.
    Let $\tilde{a} : H \times^{K_{\chi} \rtimes \G_m} X \to X$ be the map induced from the action map $a$.
    We can then define
    \[(-) \cstar (-) \coloneq \tilde{a}_! (\chi^* (-) \tilde{\boxtimes} -) : D_{\G_m}^b (\A^1, \k) \times D_{K_{\chi} \rtimes \G_m}^b (X, \k) \to D_{K_{\chi} \rtimes \G_m}^b (X, \k),\]
    \[(-) \sstar (-) \coloneq \tilde{a}_* (\chi^* (-) \tilde{\boxtimes} -) : D_{\G_m}^b (\A^1, \k) \times D_{K_{\chi} \rtimes \G_m}^b (X, \k) \to D_{K_{\chi} \rtimes \G_m}^b (X, \k).\]
    Here $\chi^* (\scrF) \tilde{\boxtimes} (-)$ is the functor sending an object $\scrG$ to the unique object in $D_{K_{\chi} \rtimes \G_m}^b (H \times^{K_{\chi} \rtimes \G_m} X, \k)$ whose pullback along $H \times X \to H \times^{K_{\chi} \rtimes \G_m} X$ is isomorphic to $\chi^* \scrF \boxtimes \scrG \in D_{(K_{\chi} \rtimes \G_m)^2}^b (X, \k)$.

    \begin{lemma}\label{lem:existence_of_kirillov_averaging}\qquad
        \begin{enumerate}
        \item The tautological functor $\iota_! : \Kir_! (X, \k) \hookrightarrow D_{K_{\chi} \rtimes \G_m}^b (X, \k)$ admits a right adjoint
        \[\Av^!_{\Kir} : D_{K_{\chi} \rtimes \G_m}^b (X, \k) \to \Kir_! (X, \k), \qquad \scrF \mapsto u_* \uk [1] \cstar \scrF.\]
        Moreover, there is a functorial distinguished triangle
        \[\iota_! \Av^!_{\Kir} (\scrF) \to \scrF \to \Ind_!^{\chi} (\scrF) \to\]
        for $\scrF \in D_{K_{\chi} \rtimes \G_m}^b (X, \k)$.
        \item The tautological functor $\iota_* : \Kir_* (X, \k) \hookrightarrow D_{K_{\chi} \rtimes \G_m}^b (X, \k)$ admits a left adjoint
        \[\Av^*_{\Kir} : D_{K_{\chi} \rtimes \G_m}^b (X, \k) \to \Kir_* (X, \k), \qquad \scrF \mapsto u_! \uk [1] \sstar \scrF.\]
        Moreover, there is a functorial distinguished triangle
        \[\Ind_*^{\chi} (\scrF) \to \scrF \to \iota_* \Av_*^{\Kir} (\scrF) \to\]
        for $\scrF \in D_{K_{\chi} \rtimes \G_m}^b (X, \k)$.
        \end{enumerate}
    \end{lemma}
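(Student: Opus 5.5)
I will only treat part (1); part (2) then follows by applying Verdier duality. Duality exchanges $\cstar$ and $\sstar$, exchanges $\Ind^{\chi}_!$ and $\Ind^{\chi}_*$, and exchanges the two Kirillov models, so it converts a right adjoint into a left adjoint and reverses the triangle.

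\emph{Step 1: the triangle.} Apply $(-)\cstar\scrF$ to the excision triangle $s_*\k \to u_!\uk[1] \to \uk_{\A^1}[1]\to$, and also use its Verdier-dual/rotated form $\uk_{\A^1}[1] \to u_*\uk[1] \to s_*\k[1]\to$. Rotating the latter gives a triangle
\[ u_*\uk[1]\to s_*\k \to \uk_{\A^1}[2]\to .\]
Convolving this with $\scrF$ and using that $s_*\k$ is the unit gives
\[ u_*\uk[1]\cstar\scrF \to \scrF \to \uk_{\A^1}[2]\cstar \scrF \to .\]
The third term is $\tilde a_!$ of the $\chi$-pullback of the constant sheaf, shifted by $2 = 2\dim \G_a$. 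It should therefore be identified with $\For\,\Av_!^{H\rtimes\G_m}$ from $K_\chi\rtimes\G_m$, i.e. with $\Ind_!^\chi(\scrF)$. To check this, write $\Av_!$ as $!$-pushforward along $H\times^{K_\chi\rtimes\G_m}X\to X$ with the dimension shift $[2\dim(H/K_\chi)]$, and note that the fibre $H/K_\chi\cong\G_a$ appears in both descriptions. Functoriality of this triangle in $\scrF$ is clear from the construction.

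\emph{Step 2: the first term lies in $\Kir_!$.} I need $\Ind_!^\chi(u_*\uk[1]\cstar\scrF)=0$. By Step 1 this is $\uk_{\A^1}[2]\cstar u_*\uk[1]\cstar\scrF$, using associativity of the module structure. After forgetting $\G_m$-equivariance, the factor $\uk_{\A^1}\cstar u_*\uk[1]$ is $\Av^{\G_a}_!(u_*\uk[1])$, which vanishes by (\ref{eq:properties_of_FL_kernels}), exactly as in Lemma~\ref{lem:two_out_of_three}. Since vanishing can be checked after forgetting equivariance, the claim follows.

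\emph{Step 3: adjunction.} Let $\scrG\in\Kir_!$. I will show that $\Hom(\scrG,\Ind_!^\chi\scrF[n])=0$ for all $n$; applying $\Hom(\scrG,-)$ to the triangle of Step 1 then gives
\[ \Hom(\scrG,u_*\uk[1]\cstar\scrF)\cong \Hom(\scrG,\scrF).\]
Since $\Ind_!^\chi = \For\circ\Av_!$ and $\Av_!$ is left adjoint to $\For$ (up to shift), the vanishing should follow from an adjunction. The cleanest route is to use the right adjoint of $\Ind_!^\chi$, which is $\For\circ\Av_*$ up to shift, together with the kernel description. Concretely, $\Hom(\scrG,\Ind_!\scrF)\cong\Hom(\Ind_*'\scrG,\scrF)$, where $\Ind'$ is the left adjoint of $\Ind_!$. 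This left adjoint is again convolution with $\uk_{\A^1}$ (suitably shifted), because convolution with $\scrK$ has left adjoint given by convolution with the inverse-pulled-back kernel, and $\uk_{\A^1}$ is invariant under $x\mapsto -x$. It therefore annihilates $\scrG$, since $\scrG\in\ker\Ind_!^\chi$.

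\emph{Main obstacle.} The hard part is getting this adjunction right with the correct $!$/$*$ flavour. A $!$-convolution typically has a right adjoint built from $*$-pushforwards, not a left adjoint. If the above fails, I would instead argue via the idempotence of Lemma~\ref{lem:two_out_of_three}. The unit of that lemma shows that $u_*\uk[1]\cstar\scrG\to\scrG$ is an isomorphism for $\scrG\in\Kir_!$: in the triangle of Step 1 applied to $\scrG$, the third term is $\Ind_!^\chi\scrG=0$. The triangle then exhibits $u_*\uk[1]\cstar(-)$ as a colocalization onto the kernel of $\Ind_!^\chi$. This formally gives the right adjoint as soon as the counit map is an isomorphism on $\Kir_!$ and $\Av^!_{\Kir}$ lands in $\Kir_!$, by the standard criterion for idempotent comonads, namely $\scrF'\cstar\scrF'\cong\scrF'$ via the counit. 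That criterion is exactly what Steps 1 and 2 supply.
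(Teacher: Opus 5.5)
Your skeleton is the paper's: convolve $u_*\uk[1]\to s_*\k\to\uk_{\A^1}[2]\to$ with $\scrF$, identify the third term with $\Ind_!^{\chi}(\scrF)$, show the first term lies in $\Kir_!(X,\k)$, and deduce the adjunction. The paper only says the adjunction can be ``readily checked''.

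Your Step 2 is a valid variant. The paper applies $\Ind_!^{\chi}$ to the triangle and uses that $\Ind_!^{\chi}(\scrF)\to\Ind_!^{\chi}\Ind_!^{\chi}(\scrF)$ is invertible because $H/K_{\chi}\cong\G_a$ is unipotent. You instead use associativity and Laumon's vanishing $\uk_{\A^1}\cstar u_*\uk[1]=0$. Your version has an advantage: the same vanishing also gives $u_*\uk[1]\cstar\Ind_!^{\chi}(-)=0$, which is exactly what an idempotence argument needs. There is a small slip in Step 1: the dual excision triangle is $\uk_{\A^1}[1]\to u_*\uk[1]\to s_*\k\to$, not one ending in $s_*\k[1]$. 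The rotated triangle you actually use is nonetheless correct.

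The ``cleanest route'' in Step 3, however, is wrong. $\cstar$-convolution naturally has a \emph{right} adjoint; here $\Ind_!^{\chi}\dashv\Ind_*^{\chi}$. Moreover, $\Ind_!^{\chi}$ has no left adjoint of the form $\Ind_!^{\chi}[k]$. Already for $X=\A^1$ one has $\Hom(\Ind_!^{\chi}(\scrF)[k],u_!\uk[1])\cong\Hom(\Av_!\scrF[k],\Av_*(u_!\uk[1]))=0$ for all $\scrF$ and $k$, because $R\Gamma(u_!\uk)=0$. On the other hand $\Ind_!^{\chi}(u_!\uk[1])\neq0$, since $R\Gamma_c(u_!\uk)\neq0$. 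Taking $\scrF=\Ind_!^{\chi}(u_!\uk[1])$, the identity of $\scrF$ would have to correspond to $0$.

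Your fallback does work, but it needs one more condition than you name.
\begin{itemize}
\item The two conditions you list are that the counit is invertible on $\Kir_!$ and that $\Av^!_{\Kir}$ lands in $\Kir_!$. These only give surjectivity of $\Hom(\scrG,u_*\uk[1]\cstar\scrF)\to\Hom(\scrG,\scrF)$ for $\scrG\in\Kir_!$.
\item Injectivity needs $u_*\uk[1]\cstar(-)$ to invert the counit, i.e.\ $u_*\uk[1]\cstar\Ind_!^{\chi}(\scrF)=0$.
\item That is where the kernel-level idempotence enters, namely your Step 2 vanishing together with symmetry of $\cstar$.
\end{itemize}

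The shortest proof is the one you wrote down and then abandoned. For $\scrG\in\Kir_!(X,\k)$ we have $\Hom(\scrG,\For\Av_!\scrF[n])\cong\Hom(\Av_!\scrG,\Av_!\scrF[n])$. This vanishes because $\For\Av_!\scrG=\Ind_!^{\chi}(\scrG)=0$ and $\For$ is conservative, so $\Av_!\scrG=0$. Combined with the triangle, this is the adjunction.
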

    \begin{proof}
        We will just prove the statement for the right adjoint of $\iota_!$. The left adjoint of $\iota_*$ is similar.
        Let $\scrF \in D_{K_{\chi} \rtimes \G_m}^b (X, \k)$. Consider the excision triangle
        \[u_* \uk [1] \to s_* \k \to \uk_{\A^1} [2] \to.\]
        We can apply $(-) \cstar \scrF$ to construct a distinguished triangle
        \begin{equation}\label{eq:existence_of_kirillov_av_1}
            u_* \uk [1] \cstar \scrF \to \scrF \to \uk_{\A^1} [2] \cstar \scrF \to.
        \end{equation}
        The last term of (\ref{eq:existence_of_kirillov_av_1}) can be identified with $\Ind_{!}^{\chi} (\scrF)$.
        We can then apply $\Ind_!^{\chi} (\scrF)$ to (\ref{eq:existence_of_kirillov_av_1}) to get a new distinguished triangle
        \[\Ind_!^{\chi} (u_* \uk [1] \cstar \scrF) \to \Ind_!^{\chi} (\scrF) \stackrel{\sim}{\to} \Ind_!^{\chi} \Ind_!^{\chi} (\scrF) \to.\]
        The second morphism is an isomorphism since $H/K_{\chi}$ is unipotent. As a result, we must have that $\Ind_!^{\chi} (u_* \uk [1] \cstar \scrF) = 0$, and hence $u_* \uk [1] \cstar \scrF \in \Kir_! (X, \k)$.
        It follows from (\ref{eq:existence_of_kirillov_av_1}) that if $\scrF \in \Kir_! (X, \k)$, then $u_* \uk [1] \cstar \scrF \cong \scrF$. 
        One can readily check from these observations that $u_* \uk [1] \star (-) : D_{K_{\chi} \rtimes \G_m}^b (X, \k) \to \Kir_! (X, \k)$ is indeed the right adjoint of $\iota_!$.
    \end{proof}

    Define a functor $T_{!}^* : \Kir_! (X, \k) \to \Kir_* (X, \k)$ by the composition
    \[\begin{tikzcd}
{\Kir_! (X, \k)} \arrow[r, "\iota_!"] & {D_{K_{\chi} \rtimes \G_m}^b (X, \k)} \arrow[r, "\Av_{\Kir}^*"] & {\Kir_* (X, \k).}
\end{tikzcd}\]
    This functor admits a right adjoint $T_{*}^! : \Kir_* (X, \k) \to \Kir_! (X, \k)$ given by the composition
        \[\begin{tikzcd}
    {\Kir_* (X, \k)} \arrow[r, "\iota_*"] & {D_{K_{\chi} \rtimes \G_m}^b (X, \k)} \arrow[r, "\Av_{\Kir}^!"] & {\Kir_! (X, \k).}
    \end{tikzcd}\]
    In light of Lemma \ref{lem:existence_of_kirillov_averaging}, $T_!^*$ (resp. $T_*^!$) is given by the convolution $u_! \uk [1] \sstar (-)$ (resp. $u_* \uk [1] \cstar (-)$).
    We call $T_!^*$ and $T_*^!$ the \emph{translation functors}. Since Verdier duality exchanges $*$-convolution with $!$-convolution, it follows that there is a natural isomorphism $T_*^! \circ \DD \cong \DD \circ T_!^*$.

    \begin{proposition}\label{prop:translation_functors_are_equivs}
        The functor $T_{!}^* : \Kir_! (X, \k) \to \Kir_* (X, \k)$ is a perverse $t$-exact equivalence of categories with inverse $T_*^!$.
    \end{proposition}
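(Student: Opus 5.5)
The plan is to prove that the two composites $T_*^! T_!^*$ and $T_!^* T_*^!$ are isomorphic to the identity by using associativity of convolution together with Lemma \ref{lem:two_out_of_three}. Then I will deduce $t$-exactness from the one-sided exactness properties of the two convolution functors.

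\emph{Equivalence.} For $\scrF \in \Kir_! (X, \k)$ we have
\[T_*^! T_!^* \scrF \cong u_* \uk [1] \cstar (u_! \uk [1] \sstar \scrF).\]
The $\cstar$- and $\sstar$-actions do not obviously associate with each other. To get around this, I first rewrite $T_!^*\scrF$ differently. Since $\scrF \in \Kir_!$, Lemma \ref{lem:existence_of_kirillov_averaging} gives $\scrF \cong u_* \uk[1] \cstar \scrF$. It would be convenient to trade $u_!\uk[1]\sstar(-)$ for a $!$-convolution, and I see two ways to do this.

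The more robust way is to apply the triangle of Lemma \ref{lem:existence_of_kirillov_averaging}(2) to $\scrF$:
\[\Ind_*^\chi \scrF \to \scrF \to T_!^* \scrF \to.\]
Applying $T_*^! = u_*\uk[1]\cstar(-)$ sends $\scrF$ to itself, because $\scrF\in\Kir_!$. It therefore suffices to show that $u_*\uk[1]\cstar \Ind_*^\chi\scrF = 0$. Now $\Ind_*^\chi\scrF$ is $H\rtimes\G_m$-equivariant, so convolving it with $u_*\uk[1]$ factors through $R\Gamma_c(u_*\uk[1]) = 0$ by (\ref{eq:properties_of_FL_kernels}). This holds because $\cstar$ with a $\G_a$-equivariant object computes $\Av^{\G_a}_!$ of the kernel, tensored with the object. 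Hence the unit map $\scrF \to T_*^!T_!^*\scrF$ is an isomorphism.

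Symmetrically, for $\scrG\in\Kir_*$, apply $T_!^*$ to the triangle
\[T_*^!\scrG \to \scrG \to \Ind_!^\chi\scrG \to.\]
The term $u_!\uk[1]\sstar\Ind_!^\chi\scrG$ vanishes by $R\Gamma(u_!\uk[1])=0$, so the counit is an isomorphism. Alternatively, this second half follows from the first by Verdier duality, using $T_*^!\circ\DD\cong\DD\circ T_!^*$. Since the unit and counit of the adjunction are isomorphisms, $T_!^*$ is an equivalence with inverse $T_*^!$.

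\emph{$t$-exactness.} It suffices to show that $T_!^*$ is right $t$-exact and $T_*^!$ is right $t$-exact. If both hold, then for $\scrF\in\Kir_!^{\geq 0}$ and $\scrG\in\Kir_*^{\leq -1}$ we get
\[\Hom(\scrG, T_!^*\scrF)=\Hom(T_*^!\scrG,\scrF)=0,\]
so $T_!^*$ is also left $t$-exact. I will establish the two right exactness statements as follows.

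For $T_!^*$, use the triangle $\Ind_*^\chi\scrF\to\scrF\to T_!^*\scrF\to$. The functor $\Ind_*^\chi$ raises perverse degree by at most one, by the analogue of (\ref{eq:t_str_existence_2}) for the $\Av_*$ of a one-dimensional unipotent group. For $\scrF$ perverse this gives $\Ind_*^\chi\scrF\in{}^pD^{\leq 1}$, and hence $T_!^*\scrF\in{}^pD^{\leq 0}$.

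For $T_*^!$, use the triangle $T_*^!\scrG\to\scrG\to\Ind_!^\chi\scrG\to$ with $\scrG\in\Kir_*^{\leq 0}$. By (\ref{eq:t_str_existence_1}), $\Ind_!^\chi\scrG\in{}^pD^{\leq 0}$. The triangle then only gives $T_*^!\scrG\in{}^pD^{\leq 1}$, which is not enough.

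This is the main obstacle: the naive degree bounds are off by one. I would resolve it by using membership in the Kirillov model. For $\scrG$ perverse in $\Kir_*$, the degree-one part of $T_*^!\scrG$ is a quotient ${}^pH^0(\Ind_!^\chi\scrG)$. This object is $H$-equivariant. By adjunction it maps trivially from objects in $\Kir_*$: it is the image of $\Av_!$, which is left adjoint to $\For$, and $\Kir_*$ is orthogonal to the $H$-equivariant objects on the relevant side. Killing this part shows $T_*^!\scrG\in{}^pD^{\leq 0}$. If that fails, the fallback is to reduce along orbits, where $X$ is locally of the form $\A^1\times Y$ with the Kirillov condition, and to check exactness directly using Example \ref{ex:kir_on_A1}. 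There, $u_!\uk[1]\sstar u_*\uk[1]\cong u_!\uk[1]$ by Lemma \ref{lem:two_out_of_three}, and both generators are perverse.
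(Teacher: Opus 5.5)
Your proof that $T_!^*$ and $T_*^!$ are mutually inverse is correct and is essentially the paper's argument. The only difference is how you kill $u_*\uk[1]\cstar\Ind_*^\chi\scrF$: you note that $!$-convolution with an $H\rtimes\G_m$-equivariant object reduces to $R\Gamma_c(u_*\uk[1])\otimes(-)=0$, whereas the paper uses the triangle of Lemma~\ref{lem:existence_of_kirillov_averaging} and the zigzag identity. Your reduction of $t$-exactness to right $t$-exactness of both $T_!^*$ and $T_*^!$ is also fine, as is your proof for $T_!^*$.

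\textbf{The gap.} It is the right $t$-exactness of $T_*^!$, i.e.\ the vanishing of ${}^pH^1(T_*^!\scrG)=\operatorname{coker}(\scrG\to{}^pH^0\Ind_!^\chi\scrG)$ for perverse $\scrG\in\Kir_*(X,\k)$. The orthogonality you invoke is on the wrong side. For $\scrG\in\Kir_*$ and equivariant $\mathcal{E}$ one has $\Hom(\For\mathcal{E},\scrG)\cong\Hom(\mathcal{E},\Av_*\scrG)=0$. Maps \emph{out of} $\scrG$ into equivariant objects do not vanish in general; that vanishing characterizes $\Kir_!$. On $\A^1$, for example, $\Hom(u_!\uk[1],\uk_{\A^1}[1])\cong\k$. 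Moreover $T_*^!(u_!\uk[1])\cong u_*\uk[1]$ is perverse (Lemma~\ref{lem:two_out_of_three}), so the map $u_!\uk[1]\to{}^pH^0\Ind_!^\chi(u_!\uk[1])\cong\uk_{\A^1}[1]$ is an epimorphism onto a nonzero object. So what must be shown is that $\scrG\to{}^pH^0\Ind_!^\chi\scrG$ is always \emph{surjective}. If it were zero, as your orthogonality would force, ${}^pH^1(T_*^!\scrG)$ would be all of ${}^pH^0\Ind_!^\chi\scrG$, the opposite of what you want. Your fallback via local product structures $\A^1\times Y$ is not available for a general $X$ with a nice $H\rtimes\G_m$-action.

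\textbf{The missing input.} The forgetful functor from $H\rtimes\G_m$- to $K_\chi\rtimes\G_m$-equivariant perverse sheaves is pullback along a smooth map of quotient stacks with connected fibres $H/K_\chi\cong\G_a$, so its essential image is stable under subquotients (cf.\ \cite[\S 4.2]{BBD}). With this the argument closes in three steps.
\begin{enumerate}
\item ${}^pH^1(T_*^!\scrG)$ is a quotient of ${}^pH^0\Ind_!^\chi\scrG=\For\,{}^pH^0\Av_!\scrG$, so it is of the form $\For\mathcal{E}$.
\item It also lies in $\Kir_!(X,\k)$, since $\Kir_!$ is stable under perverse truncation (Lemma~\ref{lem:t_str_existence}).
\item $\Kir_!$ contains no nonzero equivariant object: $\Ind_!^\chi\For\mathcal{E}\cong\For\mathcal{E}$, because $\Av_!\For\to\id$ is an isomorphism ($\G_a$ being contractible).
\end{enumerate}
Hence ${}^pH^1(T_*^!\scrG)=0$.

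You were right to flag the off-by-one problem. The paper's own last step only records that $T_!^*$ is right $t$-exact (from $\Av_{\Kir}^*\dashv\iota_*$ with $\iota_*$ $t$-exact) and then appeals to $T_!^*$ being an equivalence. That appeal does not by itself give the other half, since a shift is a right $t$-exact equivalence. An argument like the one above is what actually closes it.
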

    \begin{proof} 
        Let $\scrF \in \Kir_! (X, \k)$. By Lemma \ref{lem:existence_of_kirillov_averaging}, there is a distinguished triangle
        \[\Ind_*^{\chi} \scrF \to \scrF \to u_! \uk [1] \sstar \scrF \to.\]
        We can then apply $u_* \uk [1] \cstar (-)$ to the above distinguished triangle to yield 
        \[u_* \uk [1] \cstar \Ind_*^{\chi} \scrF \to u_* \uk [1] \cstar \scrF \to u_* \uk [1] \cstar (u_! \uk [1] \sstar \scrF) \to. \]
        Note that since $\scrF \in \Kir_! (X, \k)$, we have that $u_* \uk [1] \cstar \scrF \cong \scrF$.
        We claim that $\scrF \to u_* \uk [1] \cstar (u_! \uk [1] \sstar \scrF)$ is an isomorphism. It suffices to show that $u_* \uk [1] \cstar \Ind_*^{\chi} \scrF  = 0$.
        Again by Lemma \ref{lem:existence_of_kirillov_averaging}, there is a distinguished triangle
        \[u_* \uk [1] \cstar \Ind_*^{\chi} \scrF \to \Ind_*^{\chi} \scrF \stackrel{\eta}{\to} \Ind_!^{\chi} \Ind_*^{\chi} \scrF \to.\]
        On the other hand, we have a morphism $\epsilon : \Ind_!^{\chi} \Ind_*^{\chi} \scrF \to \Ind_*^{\chi}$ induced by the counit of the adjunction $(\Av^{H \rtimes \G_m, !}_{K_{\chi} \rtimes \G_m}, \For^{H \rtimes \G_m}_{K_{\chi} \rtimes \G_m})$.
        By the zigzag identity, $\eta \circ \epsilon \cong \id$. Moreover, $\epsilon$ is necessarily an isomorphism since $\G_a$ is contractible, and so $\eta$ is an isomorphism.
        We deduce that $u_* \uk [1] \cstar \Ind_*^{\chi} \scrF = 0$, and hence that $\scrF \to u_* \uk [1] \cstar (u_! \uk [1] \sstar \scrF)$ is an isomorphism.
        By a final application of Lemma \ref{lem:existence_of_kirillov_averaging}, we see that $u_* \uk [1] \cstar (u_! \uk [1] \sstar \scrF)$ can be naturally identified with $(T_*^! \circ T_!^*) (\scrF)$.
        Therefore, $T_*^!$ is a left inverse for $T_!^*$. The argument that $T_*^!$ is also a right inverse follows from a symmetric argument and is omitted.

        It remains to show that $T_!^*$ is perverse $t$-exact. Note that $\iota_!$ is $t$-exact and $\Av_{\Kir}^*$ is right $t$-exact because $\iota_*$ is $t$-exact.
        As a result, $T_!^*$ is right $t$-exact. Since $T_!^*$ is an equivalence of categories, this is enough to deduce that $T_!^*$ is $t$-exact.
    \end{proof}

    \begin{example}\label{ex:translation_functor_for_A1}
        We again consider the example of the Kirillov model for $\A^1$.
        By Lemma \ref{lem:two_out_of_three}, there is an isomorphism 
        \[ T_!^* (u_* \uk [1]) \cong u_! \uk[1] \sstar u_* \uk [1] \cong u_! \uk[1]. \]
    \end{example}

    \subsection{Sheaf Functors}

    Let $f : X \to Y$ be a bounded $H \rtimes \G_m$-equivariant morphism of ind-schemes.
    It follows from functorality and base change that $f_!$ and $f^*$ commute with $\Ind_!^{\chi}$.
    As a result, these functors restrict to functors
    \[f_! : \Kir_! (X, \k) \to \Kir_! (Y, \k) \qquad\text{and}\qquad f^* : \Kir_! (Y,\k) \to \Kir_! (X, \k). \]
    Likewise, $f_*$ and $f^!$ commute with $\Ind_*^{\chi}$, so there are restricted functors
    \[f_* : \Kir_* (X, \k) \to \Kir_* (Y, \k) \qquad\text{and}\qquad f^! : \Kir_* (Y,\k) \to \Kir_* (X, \k). \]
    Unlike the Whittaker model, the functors $f^!$ and $f_*$ do not preserve $\Kir_!$ and likewise $f^*$ and $f_!$ do not preserve $\Kir_*$.
    Nonetheless, these functors do admit well-defined adjoints.
    
    We will start with the $!$-Kirillov model. The functors $f_!$ and $f^*$ admit right adjoints denoted by
    \[f_{\Kir}^! : \Kir_! (Y, \k) \to \Kir_! (X, \k) \qquad\text{and}\qquad f_*^{\Kir} : \Kir_! (X,\k) \to \Kir_! (Y, \k).\]
    Explicitly, they are given by the compositions
    \[\begin{tikzcd}
{f_{\Kir}^! :  \Kir_! (Y, \k)} \arrow[r, "\iota_!", hook] & {D_{K_{\chi} \rtimes \G_m}^b (Y, \k)} \arrow[r, "f^!"] & {D_{K_{\chi} \rtimes \G_m}^b (X, \k)} \arrow[r, "\Av_{\Kir}^!"] & {\Kir_! (X, \k),}
\end{tikzcd} \]
    \[\begin{tikzcd}
{f_*^{\Kir} : \Kir_! (X, \k)} \arrow[r, "\iota_!", hook] & {D_{K_{\chi} \rtimes \G_m}^b (X, \k)} \arrow[r, "f_*"] & {D_{K_{\chi} \rtimes \G_m}^b (Y, \k)} \arrow[r, "\Av_{\Kir}^!"] & {\Kir_! (Y, \k).}
\end{tikzcd}\]
    Since $\iota_!$ is fully faithful, we have that $\Av_{\Kir}^! \circ \iota_! \cong \id$. 
    It is then easy to check from this observation that $f_! \dashv f_{\Kir}^!$ and $f^* \dashv f_*^{\Kir}$. 

    Similarly, in the $*$-Kirillov model, the functors $f_*$ and $f^!$ admit left adjoints. These will be denoted by
    \[f_{\Kir}^* : \Kir_* (Y, \k) \to \Kir_* (X, \k) \qquad\text{and}\qquad f_!^{\Kir} : \Kir_* (X,\k) \to \Kir_* (Y, \k).\]
    They are defined by the compositions
        \[\begin{tikzcd}
{f_{\Kir}^* : \Kir_* (Y, \k)} \arrow[r, "\iota_*", hook] & {D_{K_{\chi} \rtimes \G_m}^b (Y, \k)} \arrow[r, "f^*"] & {D_{K_{\chi} \rtimes \G_m}^b (X, \k)} \arrow[r, "\Av_{\Kir}^*"] & {\Kir_* (X, \k),}
\end{tikzcd} \]
    \[\begin{tikzcd}
{f_!^{\Kir} : \Kir_* (X, \k)} \arrow[r, "\iota_*", hook] & {D_{K_{\chi} \rtimes \G_m}^b (X, \k)} \arrow[r, "f_!"] & {D_{K_{\chi} \rtimes \G_m}^b (Y, \k)} \arrow[r, "\Av_{\Kir}^*"] & {\Kir_* (Y, \k).}
\end{tikzcd}\]

    \begin{lemma}\label{lem:kir_functors_commute_with_DD}
        With the same notation as above, there are natural isomorphisms
        \[\DD \circ f_! \cong f_* \circ \DD, \qquad \DD \circ f^* \cong f^! \circ \DD, \qquad \DD \circ f_*^{\Kir} \cong f_!^{\Kir} \circ \DD, \qquad \DD \circ f_{\Kir}^! \cong f_{\Kir}^* \circ \DD.\]
    \end{lemma}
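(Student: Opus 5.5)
The plan is to reduce all four isomorphisms to two inputs: classical Verdier duality in $D_{K_{\chi} \rtimes \G_m}^b$, and the compatibility of $\DD$ with the two Kirillov averaging functors.

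\textbf{The first two isomorphisms.} These are the usual ones for the ambient equivariant derived categories. I will check that they restrict correctly to the Kirillov models.

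For $\scrF \in \Kir_! (X, \k)$ we have $\DD \scrF \in \Kir_* (X, \k)$, since $\DD \circ \Ind_!^{\chi} \cong \Ind_*^{\chi} \circ \DD$. The classical isomorphism $\DD f_! \scrF \cong f_* \DD \scrF$ in $D_{K_{\chi} \rtimes \G_m}^b (Y, \k)$ then lies in the full subcategory $\Kir_* (Y, \k)$. In other words, $\DD \circ f_! \cong f_* \circ \DD$ as functors $\Kir_! (X,\k)^{\op} \to \Kir_* (Y,\k)$.

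The same argument gives $\DD \circ f^* \cong f^! \circ \DD$. Its relevance here is that $\DD \circ \iota_! \cong \iota_* \circ \DD$, which holds tautologically because both $\iota$'s are inclusions of full subcategories.

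\textbf{The averaging functors.} The key step is to show $\DD \circ \Av_{\Kir}^! \cong \Av_{\Kir}^* \circ \DD$ as functors $D_{K_{\chi} \rtimes \G_m}^b (X, \k)^{\op} \to \Kir_* (X, \k)$.

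I will do this by adjunction. For $\scrG \in \Kir_* (X, \k)$ and $\scrF \in D^b_{K_\chi \rtimes \G_m}(X,\k)$, compute
\[\Hom(\DD\Av^!_{\Kir}\scrF, \scrG) \cong \Hom(\DD\scrG, \Av^!_{\Kir}\scrF) \cong \Hom(\iota_!\DD\scrG, \scrF) \cong \Hom(\DD\scrF, \iota_*\scrG) \cong \Hom(\Av^*_{\Kir}\DD\scrF, \scrG).\]
The first step uses that $\DD$ is an anti-equivalence $\Kir_! \leftrightarrow \Kir_*$. The second is Lemma \ref{lem:existence_of_kirillov_averaging}(1). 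The third is Verdier duality on the ambient category together with $\DD \iota_! \cong \iota_* \DD$. The fourth is Lemma \ref{lem:existence_of_kirillov_averaging}(2).

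Naturality in $\scrG$ and Yoneda then give the isomorphism. Alternatively, one can see it directly from the formulas $\Av^!_{\Kir} = u_*\uk[1]\cstar(-)$ and $\Av^*_{\Kir} = u_!\uk[1]\sstar(-)$. Here $\DD$ exchanges $\cstar$ and $\sstar$, as $\tilde a_!$ and $\tilde a_*$ are exchanged and $\DD$ commutes with $\chi^*$ and the twisted product up to shift, and $\DD(u_*\uk[1]) \cong u_!\uk[1]$.

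\textbf{The last two isomorphisms.} Composing the pieces gives
\[\DD f_*^{\Kir} = \DD\Av^!_{\Kir} f_* \iota_! \cong \Av^*_{\Kir}\DD f_*\iota_! \cong \Av^*_{\Kir} f_! \DD\iota_! \cong \Av^*_{\Kir} f_! \iota_*\DD = f_!^{\Kir}\DD.\]
The same chain with $f^!$ and $f^*$ gives $\DD f^!_{\Kir} \cong f^*_{\Kir}\DD$.

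\textbf{Main obstacle.} The only real care needed is in the duality step: whether $\DD$ commutes with the equivariant twisted product $\chi^*(-)\tilde\boxtimes(-)$ without extra shifts or twists. This concerns the smooth quotient by $K_\chi \rtimes \G_m$, where $H/K_\chi \cong \G_a$ has relative dimension one. To avoid it, I would use the adjunction argument above, which needs only the two adjunctions and ambient Verdier duality.
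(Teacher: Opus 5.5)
Your proposal is correct and follows the paper's proof. The first two isomorphisms are the ambient Verdier duality isomorphisms restricted to the Kirillov models, and the last two reduce to $\DD \circ \Av_{\Kir}^! \cong \Av_{\Kir}^* \circ \DD$. The paper only says this fact ``can be deduced from the definitions'', presumably via the convolution formulas. Your Yoneda argument using the adjunctions $\iota_! \dashv \Av_{\Kir}^!$ and $\Av_{\Kir}^* \dashv \iota_*$ fills in that step formally, without tracking normalizations in the twisted product.
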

    \begin{proof}
        The first two isomorphisms are immediate. 
        The latter two follows from the fact that $\DD \circ \Av_{\Kir}^! \cong \Av_{\Kir}^* \circ \DD$ which can be deduced from the definitions. 
    \end{proof}

    \begin{definition}\label{def:sh_functors_commute_with_translation}
        We say that $f : X \to Y$ admits an \emph{$H \rtimes \G_m$-equivariant compactification} if there exists an ind-scheme of ind-finite type $\overline{X}$ with a nice $H \rtimes \G_m$-action such that $f$ factors as
        $f = \pi \circ j$ where $j : X \to \overline{X}$ is an $H \rtimes \G_m$-equivariant open embedding and $\pi : \overline{X} \to Y$ is an $H \times \G_m$-equivariant proper map.
    \end{definition}

    \begin{lemma}\label{lem:translation_and_sh_functors}
        Assume that $f : X \to Y$ admits an $H \rtimes \G_m$-equivariant compactification. Then there are natural isomorphisms
         \[T_!^* \circ f_! \cong f_!^{\Kir} \circ T_!^*, \qquad T_!^* \circ f^* \cong f_{\Kir}^* \circ T_!^*, \qquad T_!^* \circ f_*^{\Kir} \cong f_* \circ T_!^*, \qquad T_!^* \circ f_{\Kir}^! \cong f^! \circ T_!^*.\]
    \end{lemma}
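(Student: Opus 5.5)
Since $T_!^*$ and $T_*^!$ are mutually inverse equivalences (Proposition \ref{prop:translation_functors_are_equivs}), the four isomorphisms split into two adjoint pairs. We have $f_! \dashv f_{\Kir}^!$ and $f_!^{\Kir} \dashv f^!$ (on $\Kir_*$), and $f^* \dashv f_*^{\Kir}$ and $f_{\Kir}^* \dashv f_*$. Conjugating by an equivalence preserves adjunctions. Hence the first isomorphism $T_!^* f_! \cong f_!^{\Kir} T_!^*$, passed to right adjoints, gives $f_{\Kir}^! T_*^! \cong T_*^! f^!$, i.e. $T_!^* f_{\Kir}^! \cong f^! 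T_!^*$. Likewise the second gives the third. So the plan is to prove only the first two isomorphisms directly and then deduce the rest by uniqueness of adjoints.

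For the pullback isomorphism, both sides are computed inside $D^b_{K_\chi\rtimes\G_m}$. By Lemma \ref{lem:existence_of_kirillov_averaging}, $T_!^* f^*\scrF = u_!\uk[1]\sstar f^*\scrF$, while
\[f_{\Kir}^* T_!^*\scrF = \Av^*_{\Kir} f^*(u_!\uk[1]\sstar \scrF) = u_!\uk[1]\sstar f^*(u_!\uk[1]\sstar\scrF).\]
It therefore suffices to produce a natural map $f^*(u_!\uk[1]\sstar\scrF)\to u_!\uk[1]\sstar f^*\scrF$ and show it becomes an isomorphism after applying $u_!\uk[1]\sstar(-)$, using $u_!\uk[1]\sstar u_!\uk[1]\sstar(-)\cong u_!\uk[1]\sstar(-)$ on the relevant objects. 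The base change map $f^*\tilde a_*\to\tilde a_* f^*$ for the cartesian square of action maps $H\times^{K_\chi\rtimes\G_m}X\to H\times^{K_\chi\rtimes\G_m}Y$ supplies this map. Its cone is a cone of a base change morphism, and one checks that it lies in the image of $\Ind_*^\chi$, hence is killed by $\Av_{\Kir}^*$. A cleaner route is to use the triangle $\Ind_*^\chi\to \id\to \iota_*\Av^*_{\Kir}$ directly: apply $f^*$ to it (here $f^*$ commutes with $\Ind_*^\chi$ because $\Av_*$ for the unipotent $\G_a$ is $\Av_!$ up to shift on these objects, by the same contractibility argument used in Proposition \ref{prop:translation_functors_are_equivs}), and then apply $\Av^*_{\Kir}$, which kills the image of $\Ind_*^\chi$. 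That $\Av^*_{\Kir}$ kills this image is exactly the vanishing $u_!\uk[1]\sstar \Ind_*^\chi=0$, dual to what was shown in the proof of Proposition \ref{prop:translation_functors_are_equivs}.

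The pushforward isomorphism $T_!^*f_!\cong f_!^{\Kir}T_!^*$ is where the compactification hypothesis enters, and I expect it to be the main obstacle. Write $f=\pi\circ j$. For proper $\pi$ we have $\pi_!=\pi_*$, which commutes with both $\sstar$ and $\cstar$ by proper base change. Hence $T_!^*\pi_!\cong\pi_*T_!^*$, and $\pi_*=\pi_!$ agrees with $\pi_!^{\Kir}$ on $\Kir_*$ since $\Av^*_{\Kir}$ is the identity there. For the open embedding $j$, $j^*=j^!$, so the pullback case already proved gives $T_!^* j^*\cong j^*_{\Kir}T_!^*$ with $j_{\Kir}^*=\Av^*_{\Kir}j^!=j^!$ on $\Kir_*$. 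Taking left adjoints of $j^*$ on the $!$-side is awkward, so instead I use the equivalent statement on right adjoints, $T_!^*j_*^{\Kir}\cong j_*T_!^*$, which follows from $j^*\dashv j_*^{\Kir}$ and $j_{\Kir}^*\dashv j_*$. For $j_!$ itself, I would pass through Verdier duality: Lemma \ref{lem:kir_functors_commute_with_DD} together with $T_*^!\DD\cong\DD T_!^*$ converts the $j_*^{\Kir}$ statement into the $j_!$ statement. Composing the $j$ and $\pi$ cases, and checking compatibility of the composites $(\pi j)_!^{\Kir}\cong\pi_!^{\Kir}j_!^{\Kir}$ (which holds since $\Av^*_{\Kir}$ is a localization and $\pi_!$ preserves $\Kir_*$), yields the first isomorphism. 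The remaining two then follow by adjunction as explained in the first paragraph.
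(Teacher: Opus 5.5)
Your proof is correct in outline and takes a different route from the paper's, but one justification in it is wrong. The paper uses the compactification at once to reduce to $f$ proper or an open embedding. In each case the functor preserving both Kirillov models ($f_!=f_*$, resp.\ $f^*=f^!$) is $\sstar$-linear, which gives two of the isomorphisms directly; the other two follow by adjunction and Proposition~\ref{prop:translation_functors_are_equivs}.

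You instead pair the four isomorphisms by adjunction at the outset. You then prove the pullback isomorphism for \emph{arbitrary} equivariant $f$, by applying $f^*$ and then $\Av^*_{\Kir}$ to the triangle $\Ind_*^{\chi}\scrF\to\scrF\to\iota_*\Av^*_{\Kir}\scrF$. The compactification is used only for $f_!$: the proper case is as in the paper, and the open case comes from the pullback case via adjunction and Lemma~\ref{lem:kir_functors_commute_with_DD}. These steps check out, and you also treat the compatibility $(\pi j)_!^{\Kir}\cong\pi_!^{\Kir}j_!^{\Kir}$, which the paper leaves implicit. The Verdier duality detour is unnecessary: $j_!\dashv j^*$ on $\Kir_!$ and $j_!^{\Kir}\dashv j^!=j^*_{\Kir}$ on $\Kir_*$, so you can simply take left adjoints.

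The incorrect point is the parenthetical claim that $f^*$ commutes with $\Ind_*^{\chi}$. This fails in general for two reasons. If it held, $f^*$ would preserve $\Kir_*=\ker\Ind_*^{\chi}$, which the paper explicitly says does not happen. Also, $\Av_*$ is not $\Av_!$ up to shift on Kirillov objects: $\Ind_!^{\chi}(u_*\uk[1])=0$ by (\ref{eq:properties_of_FL_kernels}), while $\Ind_*^{\chi}(u_*\uk[1])\neq 0$ because $R\Gamma(u_*\uk[1])\neq 0$.

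What you actually need is weaker and immediate. Write $\For=\For^{H\rtimes\G_m}_{K_\chi\rtimes\G_m}$ and let $\Av_*$ be its right adjoint. Then:
\begin{itemize}
\item $\Ind_*^{\chi}\scrF=\For\Av_*\scrF$ lies in the essential image of $\For$;
\item the equivariant functor $f^*$ commutes with $\For$;
\item $\Av^*_{\Kir}$ kills that essential image. Indeed, for $\mathcal{K}\in\Kir_*$ we have $\Hom(\Av^*_{\Kir}\For\mathcal{G},\mathcal{K})\cong\Hom(\mathcal{G},\Av_*\iota_*\mathcal{K})=0$, since $\For\Av_*\mathcal{K}=\Ind_*^{\chi}\mathcal{K}=0$ and $\For$ is conservative. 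Now take $\mathcal{K}=\Av^*_{\Kir}\For\mathcal{G}$.
\end{itemize}
With this replacement your pullback step is sound.

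Note also that $f_!$ commutes with $\For$ as well. The identical argument therefore gives $T_!^*f_!\cong f_!^{\Kir}T_!^*$ for every $f$. So your method actually proves all four isomorphisms without the compactification hypothesis. The paper's shorter argument, by contrast, genuinely relies on the factorization $f=\pi\circ j$ and on the $\sstar$-linearity of $\pi_*$ and $j^*$.
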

    \begin{proof}
        Since $f$ admits a compactification, we only have to check these isomorphisms hold when $f$ is proper or $f$ is an open embedding. 

        First, assume that $f$ is proper. It is easy to check from functorality that $f_* = f_! : D_{K_{\chi} \rtimes \G_m}^b (X, \k) \to D_{K_{\chi} \rtimes \G_m}^b (Y, \k)$ is $D_{\G_m}^b (\A^1, \k)$-linear with respect to the $\sstar$-action.
        In this case, $f_!^{\Kir} = f_!$ and $f_*^{\Kir} = f_*$, and so we can conclude that $T_!^* \circ f_! \cong f_!^{\Kir} \circ T_!^*$ and $T_!^* \circ f_*^{\Kir} \cong f_* \circ T_!^*$.
        By taking adjoints and using Proposition \ref{prop:translation_functors_are_equivs}, we deduce that $T_!^* \circ f^* \cong f_{\Kir}^* \circ T_!^*$ and $T_!^* \circ f_{\Kir}^! \cong f^! \circ T_!^*$ as well.

        Now assume that $f$ is open. By base change, $f^* = f^! : D_{K_{\chi} \rtimes \G_m}^b (Y, \k) \to D_{K_{\chi} \rtimes \G_m}^b (X, \k)$ is $D_{\G_m}^b (\A^1, \k)$-linear with respect to the $\sstar$-action.
        Moreover, $f_{\Kir}^* = f^*$ and $f_{\Kir}^! = f^!$. It then follows from the definitions that $T_!^* \circ f^* \cong f_{\Kir}^* \circ T_!^*$ and $T_!^* \circ f_{\Kir}^! \cong f^! \circ T_!^*$.
        The remaining isomorphisms follow from taking adjoints and Proposition \ref{prop:translation_functors_are_equivs}.
    \end{proof}

    \begin{lemma}\label{lem:recollement}
        Let $U \subseteq X$ be an $H \times \G_m$-stable open subset with complement $Z \subseteq X$. Write $j : U \hookrightarrow X$ and $i : Z \hookrightarrow X$ denote the inclusion maps.
        There are recollement diagrams
        \[\begin{tikzcd}
{\Kir_! (Z, \k)} \arrow[r, "i_!"] & {\Kir_! (X, \k)} \arrow[r, "j^*"] \arrow[l, "i^*"', bend right] \arrow[l, "i_{\Kir}^!", bend left] & {\Kir_! (U, \k)} \arrow[l, "j_!"', bend right] \arrow[l, "j_*^{\Kir}", bend left]
\end{tikzcd}\]
\[\begin{tikzcd}
{\Kir_* (Z, \k)} \arrow[r, "i_*"] & {\Kir_* (X, \k)} \arrow[r, "j^!"] \arrow[l, "i_{\Kir}^*"', bend right] \arrow[l, "i^!", bend left] & {\Kir_* (U, \k)} \arrow[l, "j_!^{\Kir}"', bend right] \arrow[l, "j_*", bend left]
\end{tikzcd}\]
    \end{lemma}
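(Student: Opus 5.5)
I will deduce both recollements from the usual recollement on the ambient equivariant categories $D_{K_{\chi} \rtimes \G_m}^b(-, \k)$, using only the facts already established. For the $!$-Kirillov model, the section on sheaf functors shows that $i_!$, $i^*$, $j_!$ and $j^*$ preserve $\Kir_!$. It also constructs the right adjoints $i_{\Kir}^! = \Av^!_{\Kir}\circ i^!\circ\iota_!$ and $j_*^{\Kir} = \Av^!_{\Kir}\circ j_*\circ \iota_!$, with $i_!\dashv i^!_{\Kir}$ and $j^*\dashv j_*^{\Kir}$. The adjunctions $i^*\dashv i_!$ and $j_!\dashv j^*$ hold already in the ambient category, and they restrict because $\Kir_!$ is a full subcategory. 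So all the functors and the six adjunctions in the diagram are in place.

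It then remains to check the recollement axioms, in this order.
\begin{enumerate}
\item $j^* i_! = 0$. This holds ambiently.
\item $i_!$, $j_!$ and $j_*^{\Kir}$ are fully faithful. For $i_!$ and $j_!$ this is inherited from the ambient category, since the unit and counit are computed there. For $j_*^{\Kir}$ I show the counit $j^* j_*^{\Kir}\to\id$ is an isomorphism. Since $j$ is an open embedding, $j^*$ is linear for the $\cstar$-action, which gives
\[j^*\Av^!_{\Kir}(j_*\scrF) \cong u_*\uk[1]\cstar j^*j_*\scrF \cong u_*\uk[1]\cstar\scrF \cong \scrF\]
for $\scrF\in\Kir_!(U,\k)$, by Lemma \ref{lem:existence_of_kirillov_averaging}.
\item The two gluing triangles. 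For $\scrF\in\Kir_!(X,\k)$, the triangle $j_!j^*\scrF\to\scrF\to i_!i^*\scrF\to$ is the ambient one, and all its terms lie in $\Kir_!$. For the other triangle, I take the ambient triangle $i_*i^!\scrF\to\scrF\to j_*j^*\scrF\to$ and apply the exact functor $\Av^!_{\Kir}$. This fixes $\scrF$. Because $i$ is proper, $i_*=i_!$ commutes with $\cstar$-convolution by $u_*\uk[1]$ (the same linearity used in the proof of Lemma \ref{lem:translation_and_sh_functors}). The first term therefore becomes $i_!\Av^!_{\Kir} i^!\scrF = i_! i^!_{\Kir}\scrF$, and the third term is $j_*^{\Kir}j^*\scrF$ by definition.
\end{enumerate}

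The $*$-version follows from the $!$-version by Verdier duality. Use Lemma \ref{lem:kir_functors_commute_with_DD} and $\DD:\Kir_!(-)^{\op}\simeq\Kir_*(-)$; this swaps $i^*\leftrightarrow i^!$, $i^!_{\Kir}\leftrightarrow i^*_{\Kir}$, $j_!\leftrightarrow j_*$ and $j_*^{\Kir}\leftrightarrow j_!^{\Kir}$, which carries the first diagram to the second. Alternatively, one can transport the recollement along the equivalence $T_!^*$ using Lemma \ref{lem:translation_and_sh_functors}, since $i$ and $j$ trivially admit compactifications.

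The main obstacle is the compatibility of $\Av^!_{\Kir}$ with $i_*$ and $j^*$. This needs the base-change and linearity statements for equivariant convolution with the Fourier--Laumon kernel along closed and open embeddings. These statements are formal from proper base change for $\tilde a$, but they must be phrased carefully in the equivariant setting.
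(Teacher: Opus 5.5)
Your proof is correct, but it takes a different route from the paper's.

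The paper checks only the two non-formal axioms: full faithfulness of $j_*^{\Kir}$, and the triangle $i_! i^!_{\Kir}\scrF \to \scrF \to j_*^{\Kir} j^*\scrF \to$. It gets both by transport through the translation equivalence:
\begin{itemize}
\item Lemma \ref{lem:translation_and_sh_functors} gives $j_*^{\Kir} \cong T_*^! \circ j_* \circ T_!^*$, so full faithfulness follows from that of $j_*$ together with Proposition \ref{prop:translation_functors_are_equivs}.
\item The triangle comes from applying $T_*^!$ to the ordinary excision triangle for $T_!^*\scrF$ in $\Kir_*(X,\k)$. There $i^!$ and $j_*$ are honest sheaf functors preserving the subcategory.
\end{itemize}

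You instead stay in the ambient category $D^b_{K_\chi\rtimes\G_m}$ and apply $\Av^!_{\Kir} = u_*\uk[1]\cstar(-)$ directly. The only input is that $f^*$ and $f_!$ commute with $\cstar$-convolution, by base change and the K\"unneth formula for $!$-pushforward. This is the same input that shows these functors preserve $\Kir_!$ in the first place.

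Your version is more elementary and self-contained. It never uses the equivalence $T_!^*$ or Lemma \ref{lem:translation_and_sh_functors}. You also spell out the formal axioms, and you handle the $*$-case by Verdier duality rather than ``similarly.'' The paper's version is shorter given the machinery already built. It also makes visible the guiding principle: the functors missing from one Kirillov model are just the honest functors of the other, transported by $T$.

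One small correction to attribution. The $\cstar$-linearity of $i_!$ that you need is not what the proof of Lemma \ref{lem:translation_and_sh_functors} uses; that proof uses $\sstar$-linearity of $f_*=f_!$ for proper $f$. Your linearity holds anyway for any equivariant $f$ by functoriality of $!$-pushforward, so nothing is lost.
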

    \begin{proof}
        We will just prove that diagram involving the $!$-Kirillov models forms a recollement diagram. The argument for the $*$-Kirillov variant is similar.
        There are only two non-trivial axioms that must be checked.
        \begin{enumerate}
            \item The functor $j_*^{\Kir} : \Kir_! (U, \k) \to \Kir_! (X, \k)$ is fully faithful.
            \item For $\scrF \in \Kir_! (X, \k)$, there is a distinguished triangle
            \[i_! i_{\Kir}^! \scrF \to \scrF \to j_*^{\Kir} j^* \scrF \to.\]
        \end{enumerate}
        
        For (1), by Lemma \ref{lem:translation_and_sh_functors}, there is a natural isomorphism $j_*^{\Kir} \cong T_*^! \circ j_* \circ T_!^*$. The full faithfulness of $j_*^{\Kir}$ then follows from $j_*$ being fully faithful and Proposition \ref{prop:translation_functors_are_equivs}.

        We can now prove (2). There is a distinguished triangle in $\Kir_* (X)$ induced by excision
        \[i_! i^! T_!^* \scrF \to T_!^* \scrF \to j_* j^* T_!^* \scrF \to\]
        We can apply $T_*^!$ to the above triangle to yield
        \[i_! T_*^! i^! T_!^* \scrF \to T_*^! T_!^* \scrF \to T_*^! j_* T_!^* j^* \scrF \to .\]
        By Lemma \ref{lem:translation_and_sh_functors} and Proposition \ref{prop:translation_functors_are_equivs}, this triangle can be rewritten as
         \[i_! i_{\Kir}^! \scrF \to \scrF \to j_*^{\Kir} j^* \scrF \to.\]
    \end{proof}

    \begin{lemma}\label{lem:costandards_and_perversity}
        Let $f : X \hookrightarrow Y$ be a locally closed inclusion of an affine $H \rtimes \G_m$-subvariety. 
        Then $f_*^{\IW}$ is perverse $t$-exact.
    \end{lemma}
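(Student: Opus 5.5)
The superscript $\IW$ in the statement appears to be a typo for $\Kir$, so I read the claim as saying that $f_*^{\Kir} : \Kir_! (X, \k) \to \Kir_! (Y, \k)$ is perverse $t$-exact. The plan is to transport the question to the $*$-Kirillov model, where the honest pushforward $f_*$ is available. By Lemma \ref{lem:translation_and_sh_functors} we have a natural isomorphism $T_!^* \circ f_*^{\Kir} \cong f_* \circ T_!^*$, so $f_*^{\Kir} \cong T_*^! \circ f_* \circ T_!^*$. By Proposition \ref{prop:translation_functors_are_equivs}, both $T_!^*$ and its inverse $T_*^!$ are perverse $t$-exact equivalences. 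It therefore suffices to show that
\[ f_* : \Kir_* (X, \k) \to \Kir_* (Y, \k) \]
is perverse $t$-exact, where the $t$-structures are those of Lemma \ref{lem:t_str_existence}, i.e.\ restricted from $D_{K_{\chi} \rtimes \G_m}^b$.

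To apply Lemma \ref{lem:translation_and_sh_functors}, $f$ needs an equivariant compactification. I would take $\overline{X}$ to be the closure of $X$ in $Y$, which is $H \rtimes \G_m$-stable, and factor $f$ as an open embedding into $\overline{X}$ followed by a closed (hence proper) embedding into $Y$.

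The $t$-structures on the Kirillov models are restrictions of the perverse $t$-structures on the equivariant derived categories, and those in turn are transported from the non-equivariant ones along the forgetful functor. So the question reduces to the perverse $t$-exactness of $f_* : D_{\cons}^b (X, \k) \to D_{\cons}^b (Y, \k)$ for an affine locally closed embedding, restricted to the relevant subcategory. The key input is the classical fact that for an affine morphism (in particular an affine locally closed embedding) $f_*$ is perverse right $t$-exact by Artin vanishing. It is also left $t$-exact, since $f^*$ is right $t$-exact for a locally closed embedding, or more directly since $f_*$ is right adjoint to the right $t$-exact functor $f^*$. Hence $f_*$ is $t$-exact. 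For general noetherian $\k$ of finite global dimension I would quote the version of Artin vanishing valid for such coefficients, for instance via reduction to fields using $\k \otimes^L$ together with the fact that perversity can be detected after derived base change to residue fields.

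The main obstacle is that the ambient varieties here are ind-schemes and the sheaves carry $K_\chi \rtimes \G_m$-equivariance. Both are handled by the definition of the equivariant perverse $t$-structure through $\For$ and by working on a finite-type closed piece of $Y$ containing the support. The other concern is to check honestly that $f$ being affine is exactly the hypothesis needed, including that $X$ affine implies the inclusion $X \hookrightarrow Y$ is an affine morphism (true when $Y$ is separated). Once these are in place, composing the $t$-exact functors $T_!^*$, $f_*$ and $T_*^!$ gives the perverse $t$-exactness of $f_*^{\Kir}$.
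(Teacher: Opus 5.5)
Your proposal is correct and matches the paper's proof. The paper also writes $f_*^{\Kir} \cong T_*^! \circ f_* \circ T_!^*$ using Lemma \ref{lem:translation_and_sh_functors} and Proposition \ref{prop:translation_functors_are_equivs}, and then uses that $f_*$ on $\Kir_*$ is perverse $t$-exact for an affine locally closed embedding by \cite[Corollaire 4.1.3]{BBD}. Your reading of $f_*^{\IW}$ as $f_*^{\Kir}$ is the intended one, and your explicit equivariant compactification via the closure of $X$ in $Y$ supplies a detail the paper leaves implicit.
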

    \begin{proof}
        Since $f$ is affine, the morphism $f_* : \Kir_* (X, \k) \to \Kir_* (Y, \k)$ is automatically perverse $t$-exact (see \cite[Corollaire 4.1.3]{BBD}). The result then follows from Proposition \ref{prop:translation_functors_are_equivs} and Lemma \ref{lem:translation_and_sh_functors}.
    \end{proof}

    Let $\k \to \k'$ be a ring homomorphism. 
    Since extension of scalars $\k' \otimes_{\k}^L (-)$ commutes with $\Ind_!^{\chi}$ and $\Ind_*^{\chi}$, we can consider the restriction of extension of scalars to the $*$- and $!$-Kirillov models.
    \[\k' \otimes_{\k}^L (-) : \Kir_! (X, \k) \to \Kir_! (X, \k') \qquad\text{and}\qquad \k' \otimes_{\k} (-) : \Kir_* (X, \k) \to \Kir_* (X, \k'). \]
    Moreover, $\k' \otimes_{\k}^L (-)$ commutes with the pullback and pushforward functors considered in this section. 

    \begin{lemma}\label{lem:eos_commutes_with_sh_functors}\quad
        \begin{enumerate}
            \item $\k' \otimes_{\k}^L (-)$ commutes with $\iota_!$, $\Av_{\Kir}^!$, and $T_!^*$.
            \item If $f : X \to Y$ admits an $\H \rtimes \G_m$-equivariant compactification, then $\k' \otimes_{\k} (-)$ commutes with $f_!$, $f^*$, $f_*^{\Kir}$, and $f_{\Kir}^!$.
        \end{enumerate}
        Obvious variations of (1) and (2) hold for the $*$-Kirillov model as well.
    \end{lemma}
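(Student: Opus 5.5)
Extension of scalars $\k'\otimes_{\k}^L(-)$ is compatible with all six operations on $D^b_{K_\chi\rtimes\G_m}(-,\k)$, in the sense that it commutes with $f_!$, $f^*$, convolution, and equivariant averaging. It commutes with $f_*$ and $f^!$ only up to a canonical base-change morphism, which need not be an isomorphism in general. The plan is to reduce every functor in the statement to composites of $!$-pushforwards, $*$-pullbacks, convolutions and $\iota_!$, and then invoke these commutations.

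For (1), commutation with $\iota_!$ is tautological: both Kirillov models are defined as kernels of $\Ind_!^{\chi}$, and extension of scalars preserves these kernels. For $\Av_{\Kir}^!$, Lemma \ref{lem:existence_of_kirillov_averaging} identifies it with $u_*\uk[1]\cstar(-)=\tilde a_!(\chi^*u_*\uk[1]\,\tilde{\boxtimes}\,-)$. Both $\tilde a_!$ and the twisted external product commute with $\k'\otimes^L_{\k}(-)$ by the projection formula and Künneth. Moreover $\k'\otimes^L_{\k}u_*\uk_{\G_m}[1]\cong u_*\underline{\k'}_{\G_m}[1]$, because $u_*$ of a constant sheaf along the inclusion $\G_m\hookrightarrow\A^1$ is computed stalkwise: its stalk at $0$ is $R\Gamma(S^1,\k)$, which is free, so the comparison map is an isomorphism. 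Hence $\Av_{\Kir}^!$ commutes with extension of scalars. For $T_!^*=u_!\uk[1]\sstar(-)$ one does not want to deal with $\tilde a_*$ directly. Instead I would use the excision triangle $u_!\uk[1]\to \uk_{\A^1}[1]\to s_*\k[1]\to$ to write $u_!\uk[1]\sstar\scrF$ as the cone of $\Ind_*^{\chi}\scrF[\ast]\to\scrF$, matching the triangle in Lemma \ref{lem:existence_of_kirillov_averaging}(2). The functor $\Ind_*^{\chi}$ is $*$-averaging along a unipotent group $\G_a$, so it agrees with $\Ind_!^{\chi}$ up to the shift $[2]$. Since $\Ind_!^{\chi}$ commutes with extension of scalars, this yields the claim for $T_!^*$.

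For (2), $f_!$ and $f^*$ commute with extension of scalars in general by proper base change and the projection formula. For $f_*^{\Kir}$ and $f_{\Kir}^!$ one could try to argue directly, but the cleaner route uses Lemma \ref{lem:translation_and_sh_functors}. Since $f$ admits an equivariant compactification, $f_*^{\Kir}\cong T_*^!\circ f_*\circ T_!^*$ and $f^!_{\Kir}\cong T_*^!\circ f^!\circ T_!^*$. Factoring $f=\pi\circ j$, it suffices to treat the proper and the open cases. For $\pi$ proper, $\pi_*=\pi_!$ and $\pi^!$ is handled by adjunction and equivalence, or better avoided. For an open embedding $j$, $j^!=j^*$. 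The one real input is that $j_*$ commutes with $\k'\otimes^L_{\k}$ on constructible complexes. This holds because, for constructible sheaves on a stratified space, $j_*$ is computed locally by cohomology of links of finite-type varieties, and extension of scalars commutes with $R\Gamma$ of constructible complexes, since such complexes are perfect on strata when $\k$ has finite global dimension. Combined with (1) for $T_!^*$ and $T_*^!$ (the latter being the inverse of $T_!^*$ by Proposition \ref{prop:translation_functors_are_equivs}, or directly $u_*\uk[1]\cstar$), we get commutation for $f_*^{\Kir}$ and $f^!_{\Kir}$. Equivalently, $f_{\Kir}^!=\Av^!_{\Kir}\circ f^!\circ\iota_!$, and one only needs $f^!$ to commute with extension of scalars on constructible objects.

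The main obstacle I expect is rigorously justifying that $*$-pushforward and $!$-pullback commute with $\k'\otimes^L_{\k}$ for constructible sheaves on ind-schemes. I would reduce this to finite-type closed pieces, where it is standard using constructibility and finite global dimension (cf. the argument behind Lemma \ref{lem:eos_and_spherical}). The $*$-Kirillov variants follow by the same arguments, or via Verdier duality (Lemma \ref{lem:kir_functors_commute_with_DD}), noting that $\DD$ commutes with extension of scalars on constructible complexes.
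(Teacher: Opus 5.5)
\textbf{A false step in part (1).} Your argument for $T_!^*$ in part (1) rests on a false claim: $\Ind_*^{\chi}$ does \emph{not} agree with $\Ind_!^{\chi}$ up to a shift. Unipotence of $H/K_\chi\cong\G_a$ only tells you that the two averaging functors agree on objects already in the essential image of $\For^{H\rtimes\G_m}_{K_\chi\rtimes\G_m}$. This is what is used for $\epsilon$ in the proof of Proposition~\ref{prop:translation_functors_are_equivs}. On general objects the two functors differ, and that difference is the whole point of the Kirillov model.

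Concretely, on $\A^1$ with $H=\G_a$ you have $\Ind_!^{\chi}(u_*\uk[1])=0$ by (\ref{eq:properties_of_FL_kernels}). After forgetting $\G_m$-equivariance, however, $\Ind_*^{\chi}(u_*\uk[1])$ is the constant sheaf with fiber $R\Gamma(u_*\uk[1])\cong R\Gamma(\G_m,\k)[1]\neq 0$. If your claim held, $\Kir_!(X,\k)$ and $\Kir_*(X,\k)$ would coincide, contradicting this computation and the asymmetry in Example~\ref{ex:kir_on_A1}.

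The same confusion shows up when you say both Kirillov models are kernels of $\Ind_!^{\chi}$. In fact $\Kir_*$ is the kernel of $\Ind_*^{\chi}$, so even preservation of $\Kir_*$ under $\k'\otimes^L_{\k}(-)$ requires knowing that $\Ind_*^{\chi}$ commutes with extension of scalars.

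\textbf{Repair, and comparison with the paper.} The repair uses nothing new. After reducing to finite-type quotients of $H$ and finite-type closed pieces of $X$, $\Ind_*^{\chi}$ (equivalently $\tilde a_*$) is a $*$-pushforward. A $*$-pushforward of constructible complexes commutes with $\k'\otimes^L_{\k}(-)$; this is exactly the input you already invoke for $j_*$ in part (2). With it, your triangle $\Ind_*^{\chi}\scrF\to\scrF\to T_!^*\scrF\to$ gives the claim. So does a direct computation with $u_!\uk[1]\sstar(-)$, using $\k'\otimes^L_{\k}u_!\uk[1]\cong u_!\underline{\k'}[1]$.

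This is how the paper proceeds. Just before the lemma it records that extension of scalars commutes with $\Ind_!^{\chi}$, $\Ind_*^{\chi}$ and all of $f_!,f^*,f_*,f^!$. Hence (1) is immediate from $\Av^!_{\Kir}=u_*\uk[1]\cstar(-)$ and $T_!^*=u_!\uk[1]\sstar(-)$. Part (2) then follows from (1) and Lemma~\ref{lem:translation_and_sh_functors}, as in your plan.

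Given that input, your reduction of (2) to proper and open pieces is unnecessary. Note also that $\pi^!$ cannot be handled by adjunction alone. Passing to right adjoints in $\pi_!\circ(\k'\otimes^L_{\k}-)\cong(\k'\otimes^L_{\k}-)\circ\pi_!$ only yields commutation of $\pi^!$ with \emph{restriction} of scalars. There too you need the standard constructible fact, or Verdier duality.
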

    \begin{proof}
        Statement (1) follows from the definitions.
        Finally, (2) follows from (1) using the description of $f_*^{\Kir}$ and $f_{\Kir}^!$ from Lemma \ref{lem:translation_and_sh_functors}.
    \end{proof}

    \subsection{Riemann--Hilbert Correspondence}\label{subsec:rh}
    For an ind-variety $X$ with a nice $H$-action, we denote the bounded derived category of holonomic $D$-modules by $\Dmod_H^{\hol} (X)$. We will also write $\Dmod_H^{\rh} (X)$ for the full subcategory of $\Dmod_H^{\hol} (X)$
    consisting of regular holonomic $D$-modules.

Let $\k = \C$. We can apply the Riemann--Hilbert correspondence to change our sheaf-theoretic setting to that of regular holonomic $D$-modules.
    In particular, there is a commutative square
        \[\begin{tikzcd}
{D_{K_{\chi} \rtimes \G_m}^b (X, \C)} \arrow[d, "\sim"] \arrow[r, "\Ind_!^{\chi}"] & {D_{H \rtimes \G_m}^b (X, \C)} \arrow[d, "\sim"] \\
\Dmod_{K_{\chi} \rtimes \G_m}^{\rh} (X) \arrow[r, "\Ind_!^{\chi}"]                  & \Dmod_{H \rtimes \G_m}^{\rh} (X).                 
\end{tikzcd}\]
    The definition of the Kirillov model can be made regardless of sheaf-theoretic setting, so we can consider the $D$-module variant,
    \[\Kir_*^{\dR} (X) \coloneq \ker \left(  \Ind_*^{\chi} : \Dmod_{K_{\chi} \rtimes \G_m}^{\hol} (X) \to \Dmod_{K_{\chi} \rtimes \G_m}^{\hol} (X)\right).\]
    Here we are using holonomic $D$-modules so that we can later compare with the Whittaker model.  
    Then the above commutative square gives a $t$-exact fully faithful functor
    \begin{equation}\label{eq:kir_and_deRham}
        \Kir_* (X, \C) \hookrightarrow \Kir_*^{\dR} (X).
    \end{equation}

    On the other hand, in the de Rham setting there is an exponential $D$-module, denoted $\exp \in \Dmod^{\hol} (\A^1)$. 
    We define the \emph{Whittaker model} for $X$,
    \[\Whit^{\dR} (X) \coloneq \Dmod_{(H, \chi^* \exp) }^{\hol} (X),\]
    as the category of $(H, \chi^* \exp)$-equivariant holonomic $D$-modules on $X$. 

    Consider the composition of functors
    \[\begin{tikzcd}
\Whit^{\dR} (X) \arrow[r, hook] & \Dmod_{K_{\chi} }^{\hol} (X) \arrow[r, "\Av_*^{\G_m}"] & \Dmod_{K_{\chi} \rtimes \G_m }^{\hol} (X)
\end{tikzcd}\]
    where the first arrow is the forgetful functor. Since $\Av_*^{\G_m}$ commutes with $\Av_*^{\G_a}$, it is easy to see that the image of this composition belongs to $\Kir_*^{\dR} (X)$.
    In particular, there is a functor
    \begin{equation}\label{eq:whit_vs_kir}
        \Whit^{\dR} (X) \to \Kir_*^{\dR} (X).
    \end{equation}
    It is shown in \cite[\S 1.6.3]{GaiWhit} (see also \cite[Proposition A.3.2]{GL}) that (\ref{eq:whit_vs_kir}) is an equivalence of categories.

    Composing the functors (\ref{eq:kir_and_deRham}) and (\ref{eq:whit_vs_kir}), we obtain a fully faithful functor
    \begin{equation*}
        \mathcal{S} : \Kir_* (X, \C) \hookrightarrow \Whit^{\dR} (X).
    \end{equation*}
    Note that $\mathcal{S}$ commutes with $!$-pullbacks and $*$-pushforwards.

    In general, there is no reason to believe that $\mathcal{S}$ is essentially surjective. We will later see that this holds at least for the affine Grassmannian.
    The author expects that $\mathcal{S}$ is $t$-exact in general, but such a proof will not be provided. In the special case of the affine Grassmannian, the $t$-exactness will be proved in the next section.

    \section{Iwahori--Whittaker Sheaves}
    
    The goal of this section is to apply the Kirillov model in two important cases: partial (finite) flag varieties and the affine Grassmannian.

    \subsection{Whittaker Sheaves on Finite Flag Varieties}

    Let $P \subset G$ be a parabolic subgroup containing $B^{-}$ with Levi subgroup $L$.
    Let $W_L \subset W$ be the Weyl group of $L$ with respect to $T \subseteq L$, and denote the set of minimal length coset representatives for $W/W_L$ by $W^L$.

    Consider the character $\chi : U^+ \to \G_a$ defined by the composition
    \[U^+ \to U^+ / [U^+, U^+] \stackrel{\prod_{\alpha} u_{\alpha}}{\longrightarrow} \prod_{\alpha \in \Phi_s} \G_a \stackrel{\textnormal{sum}}{\to} \G_a.\]
    Write $U_{\chi}^+ = \ker \chi$. The cocharacter $\zeta : \G_m \to T$ affords an action of $U^+ \rtimes \G_m$ on the partial flag variety $G/P$. 

    Consider the $!$-Kirillov model on $G/P$ with respect to $(U^+ \rtimes \G_m, \chi)$, denoted
    \[D_{\Wh, !}^b (G/P, \k) \coloneq \Kir_{(U^+ \rtimes \G_m, \chi), !} (G/P, \k).\]

    The Bruhat decomposition gives a stratification of $G/P$ into $B^{+}$-orbits,
    \[G/P = \bigsqcup_{w \in W^L} X_w^P\]
    where $X_w^P = B^{+} \dot{w} P/P$ for some lift $\dot{w} \in N_G (T)$ of $w$. 
    Note that the $X_w^P$'s are locally closed affine spaces.

    Before proceeding with the general theory, we can focus on the case of $P = B^{-}$. Let $h_e : X_e^{B^{-}} \hookrightarrow G/B^{-}$ denote the inclusion map.
    Note that $X_e^{B^{-}} \cong U^{+}$ as $U^{+} \rtimes \G_m$-varieties. 
    As a result, $\chi$ induces an isomorphism 
    \[ \overline{\chi} : U_{\chi}^+ \rtimes \G_m \backslash X_e^{B^{-}} \stackrel{\sim}{\to} \G_m \backslash \A^1\]
    where $\G_m$ acts on $\A^1$ by dilation. We then have a functor
    \[ \Kir_! (\A^1, \k) \stackrel{\overline{\chi}^*}{\longrightarrow} \Kir_! (X_e^{B^{-}}, \k) \stackrel{h_{e!}}{\longrightarrow} D_{\Wh, !}^b (G/B^{-}, \k). \]

    \begin{proposition}\label{prop:strata_desc_of_Wh_for_flag_varieties}\quad
        \begin{enumerate}
            \item If $P \supsetneq B^{-}$, then $\Kir_! (X_w^P, \k) = 0$. In particular, $D_{\Wh, !}^b (G/P, \k) = 0$.
            \item If $P = B^{-}$, then the following composition
            \[\begin{tikzcd}
D^b (\mod{\k}^{\fg}) \arrow[r, "\sim"',"\ref{ex:kir_on_A1}"] & {\Kir_! (\A^1, \k)} \arrow[r, "\overline{\chi}^*"] & {\Kir_! (X_e^{B^{-}}, \k)} \arrow[r, "h_{e!}"] & {D_{\Wh, !}^b (G/B^{-}, \k)}
\end{tikzcd}\]
            is a $t$-exact equivalence of categories.
        \end{enumerate}
    \end{proposition}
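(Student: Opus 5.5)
Both parts will be reduced to a single stratum and then to Example \ref{ex:kir_on_A1}, working stratum by stratum with the recollement of Lemma \ref{lem:recollement}. Each $X_w^P$ is $U^+ \rtimes \G_m$-stable, since $B^+ = T U^+$ and $\G_m$ acts through $\zeta$. By induction on the closure order of the Bruhat stratification, the recollement shows that every $\scrF \in D_{\Wh,!}^b(G/P,\k)$ is an iterated extension of objects $h_{w!}\scrG_w$ with $\scrG_w = h_w^*\scrF \in \Kir_!(X_w^P,\k)$; the functors $h_{w!}$, $h_w^*$ preserve $\Kir_!$. Hence the vanishing statement in (1) follows once $\Kir_!(X_w^P,\k)=0$ for every $w$. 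Similarly, (2) follows once we know that $\Kir_!(X_w^{B^-},\k)=0$ for $w \neq e$ and that the functor on the open stratum is an equivalence.

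To analyze a single stratum, write $X_w^P \cong U^+/(U^+\cap \dot w P \dot w^{-1})$ as a $U^+ \rtimes \G_m$-homogeneous space, with $\G_m$ acting through $\zeta$ after conjugation. There are two cases.

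\emph{Case A.} Some simple root subgroup $U_\alpha$ lies in the stabilizer $U^+ \cap \dot w P\dot w^{-1}$. Then $\chi$ is nontrivial on the stabilizer. For $\scrF\in D^b_{U^+_\chi\rtimes\G_m}(X_w^P,\k)$, induction along the orbit map identifies $D^b_{U^+_\chi\rtimes \G_m}(X_w^P)$ with sheaves on a point that are equivariant for $\mathrm{Stab}\cap (U_\chi^+\rtimes\G_m)$. Under this identification, $\Ind^\chi_!\scrF$ is computed by $!$-averaging along the $\G_a$-direction, which already acts through the stabilizer; that direction is $\chi(U_\alpha)\cong\G_a$. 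Since $\G_a$ is contractible, this averaging gives $\scrF$ up to the shift $[2]$ and a Tate twist. So $\Ind_!^\chi\scrF\cong\scrF[2]$, and the kernel is zero.

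\emph{Case B.} No simple root subgroup lies in the stabilizer. For $P \supsetneq B^-$, there is a simple $\alpha$ with $U_\alpha$, or a suitable $\dot w$-conjugate of it, contained in $\dot wP\dot w^{-1}$. This is the standard fact that Whittaker sheaves vanish on non-open Bruhat cells and on partial flag varieties: $\chi$ is trivial on $U^+ \cap \dot w B^- \dot w^{-1}$ exactly when $w = w_0$-type, i.e. when the orbit is free. So Case B only occurs for $P = B^-$ and the open (free) stratum, which in the paper's conventions is $X_e^{B^-}\cong U^+$.

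On the free stratum, $\bar\chi^*$ is an equivalence $D^b_{\G_m}(\A^1)\simeq D^b_{U_\chi^+\rtimes\G_m}(U^+)$ intertwining the two $\Ind^\chi_!$. It therefore restricts to an equivalence $\Kir_!(\A^1)\simeq \Kir_!(X_e^{B^-})$, and by Example \ref{ex:kir_on_A1} this is equivalent to $D^b(\mod{\k}^{\fg})$.

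It remains to check that $h_{e!}$ is an equivalence onto $D^b_{\Wh,!}$ and is $t$-exact. By the stratum vanishing, $h_e^*$ is conservative on $D^b_{\Wh,!}(G/B^-)$, and $h_e^*h_{e!}\cong\id$. Thus $h_{e!}$ is fully faithful, and every $\scrF$ satisfies $\scrF\cong h_{e!}h_e^*\scrF$ because the cone of the counit has vanishing restriction to every stratum; this gives essential surjectivity. For $t$-exactness, the first two arrows are $t$-exact by Example \ref{ex:kir_on_A1} and smooth pullback with the correct shift. Since $h_{e!}$ is an equivalence with inverse $h_e^*\cong h_{\e,\Kir}^!$ restricted, it is right $t$-exact ($h_e$ affine open, $!$-extension is right exact) and its inverse $h_e^*$ is $t$-exact. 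Hence $h_{e!}$ is $t$-exact.

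I expect the main obstacle to be the combinatorial Case B claim: carefully matching the paper's conventions ($B^-$, $B^+$-orbits, $\zeta$-action) to show that every non-open stratum, and every stratum for $P\supsetneq B^-$, has stabilizer meeting $\chi$ nontrivially. A secondary issue is justifying $\Ind_!^\chi \cong [2]$ on such orbits, which requires care with the $\G_m$-semidirect structure.
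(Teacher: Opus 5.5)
Your proposal is correct and follows essentially the paper's argument: on every stratum other than the open cell $X_e^{B^-}$, some simple root subgroup $U_\alpha$ fixes the base point, so (since $U^+=U^+_\chi U_\alpha$) $U^+_\chi\rtimes\G_m$-equivariance is automatically $U^+\rtimes\G_m$-equivariance and $\Ind^\chi_!$ is an invertible shift; on the open cell, $\overline{\chi}$ reduces everything to Example \ref{ex:kir_on_A1}, and you spell out both this step and the recollement gluing, which the paper instead cites from \cite{Sa1} and leaves implicit, respectively. The combinatorial point you flag as the main obstacle is easier than your hedges suggest and involves no conjugates: for $w\neq e$ take a left descent $\alpha$ of $w$, so that $w^{-1}(\alpha)<0$ and hence $U_\alpha\subseteq \dot w B^-\dot w^{-1}\subseteq \dot w P\dot w^{-1}$, while for $w=e$ and $P\supsetneq B^-$ take any simple root of the Levi factor of $P$.
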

    \begin{proof}
        Let $\scrF \in \Kir_! (X_w^P, \k)$. Note that for all $\beta \in \Phi^+$, we have that $U_{\beta} \dot{w} P/P$ is stable under the $\G_m$-action.
        If $P \neq B^{-}$ or $w \neq e$, we can find a simple root $\alpha \in \phi^+$ such that $U_{\alpha} \dot{w} P/P \subseteq \dot{w}P/P$.
        Since $U^+ = U_{\chi}^+ U_{\alpha}$, this subset inclusion forces $\scrF$ to be $U^+$-equivariant.
        Note that $\For_{U_{\chi}^+ \rtimes \G_m}^{U^+ \rtimes \G_m}$ is fully faithful since $U^+/U_{\chi}^+ \cong \G_a$ is unipotent. As a result, the natural morphism $\Av_{U_{\chi}^+ \rtimes \G_m, !}^{U^+ \rtimes \G_m} \For_{U_{\chi}^+ \rtimes \G_m}^{U^+ \rtimes \G_m} \stackrel{\sim}{\to} \id$ is an isomorphism.
        Since $\scrF$ occurs in the essential image of $\For_{U_{\chi}^+ \rtimes \G_m}^{U^+ \rtimes \G_m}$, we must have that $\Av_{U_{\chi}^+, ! \rtimes \G_m}^{U^+ \rtimes \G_m} \scrF = 0$ if and only if $\scrF = 0$.

        We have now shown (1) and the equivalence $\Kir_! (X_e^{B^-}, \k) \stackrel{\sim}{\to} D_{\Wh, !}^b (G/B^{-}, \k)$. It remains to show that $D^b (\mod{\k}^{\fg}) \to \Kir_! (X_e^{B^{-}}, \k)$ is a $t$-exact equivalence of categories.
        This follows from the discussion found in \cite[\S 5.8.4]{Sa1}. 
      \end{proof}

    We could have alternatively considered the $*$-Kirillov model on $G/P$,
    \[D_{\Wh, *}^b (G/P, \k) \coloneq \Kir_{(U^+ \rtimes \G_m, \chi), *} (G/P, \k).\]
    The constructions and results of this section have obvious variants for the $*$-Kirillov model as well.

    \subsection{Iwahori--Whittaker Sheaves on Affine Grassmannians}

    Let $I^+ \subset \L^+ G$ denote the Iwahori subgroup associated with $B^+$, and let $I_u^+$ denote its pro-unipotent radical.
    Denote $\chi_{I^+}$ the composition of $\chi : U^{+} \to \G_a$ with the projection $I_u^+ \twoheadrightarrow U^+$.
    Let $K_{\chi} = \ker (\chi_{I^+}) \subset I_u^+$ which is a normal subgroup of $I_u^+$ such that $I_u^+/K_{\chi} \cong \G_a$. 
    The cocharacter $\zeta : \G_m \to T$ affords an action of $I_u^+ \rtimes \G_m$ on $\Gr$.
    
    Define the categories of $!$- and $*$-\emph{Iwahori--Whittaker sheaves} on $\Gr$ via the Kirillov model,
    \[D_{\IW, !}^b (\Gr, \k) \coloneq \Kir_{(I_u^+ \rtimes \G_m, \chi_{I^+}), !} (\Gr,\k) \qquad\text{and}\qquad D_{\IW, *}^b (\Gr, \k) \coloneq \Kir_{(I_u^+ \rtimes \G_m, \chi_{I^+}), *} (\Gr,\k).\]  
    The perverse $t$-structure on $D_{K_{\chi} \rtimes \G_m}^b (\Gr, \k)$ restricts to $t$-structures on $D_{\IW, *}^b (\Gr, \k)$ and $D_{\IW, !}^b (\Gr, \k)$. We denote the heart of these $t$-structure by
    \[P_{\IW, !} (\Gr, \k) \qquad\text{and}\qquad P_{\IW, *} (\Gr, \k).\]

    We can apply Lemma \ref{lem:av_and_convolution} to the situation where $H_1 = K_{\chi} \rtimes \G_m$ and $H_2 = I_u^+ \rtimes \G_m$ to show that convolution induces bifunctors
    \[D_{\IW, !}^b (\Gr, \k) \times D_{\L^+ G}^b (\Gr, \k) \stackrel{\star}{\to} D_{\IW, !}^b (\Gr, \k) \quad\text{and}\quad D_{\IW, *}^b (\Gr, \k) \times D_{\L^+ G}^b (\Gr, \k) \stackrel{\star}{\to} D_{\IW, *}^b (\Gr, \k).\]

    \begin{lemma}\label{lem:iw_action_commutes_with_D_and_T}
        Let $\scrF \in D_{\IW, !}^b (\Gr, \k)$ and $\scrG \in D_{\L^+ G}^b (\Gr, \k)$. There are natural isomorphisms of sheaves in $D_{\IW, *}^b (\Gr, \k)$,
        \begin{enumerate}
            \item $\DD (\scrF) \star \DD (\scrG) \cong \DD (\scrF \star \scrG)$;
            \item $T_!^* (\scrF) \star \scrG \cong T_!^* (\scrF \star \scrG)$.
        \end{enumerate}
    \end{lemma}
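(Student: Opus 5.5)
Both isomorphisms come from base change along the convolution diagram $\Gr \times \Gr \xleftarrow{p} \L G \times \Gr \xrightarrow{q} \L G \times^{\L^+ G} \Gr \xrightarrow{m} \Gr$. Two features of this diagram do the work: $m$ is proper (ind-proper), and $p,q$ are smooth quotient maps.

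For (1), we first verify that Verdier duality commutes with twisted external product. This means showing $\DD(\scrF \tilde{\boxtimes} \scrG) \cong \DD\scrF \tilde{\boxtimes} \DD\scrG$ as $K_{\chi}\rtimes\G_m$-equivariant objects on $\L G \times^{\L^+ G} \Gr$. Since $q^*$ is an equivalence onto the $K_\chi\rtimes\G_m \times \L^+G$-equivariant category, it suffices to compare $q^*$ of both sides. Because $p$ and $q$ are smooth, $p^*$ and $q^*$ intertwine $\DD$ up to shifts by relative dimension. More precisely, we work on finite-dimensional approximations: fix a closed $\L^+G$-stable finite-type piece $\overline{\Gr^\lambda}$ containing the supports, and replace $\L G$ by $\L G/\L^+_k G$ for $k$ large, on which $\L^+ G$ acts through a finite-type quotient. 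After this reduction, the relative dimension shifts of $p$ and $q$ agree. Combining this with $\DD(\scrF\boxtimes\scrG)\cong\DD\scrF\boxtimes\DD\scrG$ gives $q^*\DD(\scrF \tilde{\boxtimes} \scrG) \cong p^*(\DD\scrF\boxtimes\DD\scrG)$, hence the claim. Since $m$ is proper, $\DD m_* \cong m_* \DD$, and (1) follows. We must also check that both sides land in $D^b_{\IW,*}(\Gr,\k)$. This holds because $\DD$ sends $\Kir_!$ to $\Kir_*$, and convolution preserves $\Kir_*$ by Lemma \ref{lem:av_and_convolution}.

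For (2), recall that $T_!^* \cong u_!\uk[1] \sstar (-)$, where the action is $\tilde a_*(\chi^*(-)\tilde{\boxtimes}-)$ along the left $I_u^+$-action. The right convolution $(-)\star\scrG$ is built from the right structure of $\Gr$ as a quotient, so the left $I_u^+$-action commutes with it. We therefore form the combined diagram $I_u^+ \times^{K_\chi\rtimes\G_m} \L G \times^{\L^+G} \Gr$, which maps to $\Gr$ in two ways: apply $\tilde a$ first and then $m$, or apply $m$ first and then $\tilde a$. Both composites equal the same action map. Smooth base change along the quotient maps identifies the twisted products,
\[\chi^* u_!\uk[1] \tilde{\boxtimes} (\scrF \tilde{\boxtimes} \scrG) \cong (\chi^* u_!\uk[1]\tilde{\boxtimes}\scrF)\tilde{\boxtimes}\scrG.\]
Functoriality of $*$-pushforward then gives $u_!\uk[1]\sstar(\scrF\star\scrG) \cong (u_!\uk[1]\sstar\scrF)\star\scrG$. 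In other words, $(-)\star\scrG$ is $D^b_{\G_m}(\A^1,\k)$-linear for the $\sstar$-action. This is the same style of argument as for proper $f$ in Lemma \ref{lem:translation_and_sh_functors}, and here it does not even need properness, since only $*$-pushforwards are composed.

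The main obstacle is bookkeeping rather than any conceptual difficulty. We need to make the equivariant twisted products precise on ind-schemes. This involves checking the $\G_m$-equivariance, where $\G_m$ acts via $\zeta$ on the left only and so commutes with the right $\L^+G$-quotient. It also involves checking that the duality shifts for $p$ and $q$ match on finite-type approximations, which I would do by a dimension count exactly as in the standard proof that spherical convolution commutes with $\DD$ \cite{MV}.
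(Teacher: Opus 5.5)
Your proposal is correct and is essentially the paper's argument. The paper's proof is the one-line remark that both isomorphisms follow from base change on the convolution diagram, using that $m$ is proper and $p,q$ are smooth, and you have expanded exactly this.

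One small caution on (2): pulling $m_*$, and the non-proper $\tilde a_*$, through a twisted external product is a K\"unneth-type base change, not mere functoriality of $*$-pushforward. So your aside that properness is unnecessary is not justified by the reason you give. A clean fix is to use $m_*=m_!$ and run the argument for $T_*^! = u_*\uk[1]\cstar(-)$, where only $!$-pushforwards occur and K\"unneth is automatic, and then invert using Proposition \ref{prop:translation_functors_are_equivs}.
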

    \begin{proof}
        Both statements follow easily from the definitions using base change and the observation that $m$ is proper and $p,q$ are smooth.
    \end{proof}

    Recall that we have an affine bundle $p_{\lambda} : \Gr^{\lambda} \to G/P_{\lambda}$.
    For any $\mu \in w \cdot \lambda$ for $w\in W_f$, we have that $\Gr_{\mu}^{+} = p_{\lambda}^{-1} (X_w^{P_{\lambda}})$.
    In particular, $p_{\lambda}$ restricts to a morphism $p_{\lambda} : \Gr_{\lambda}^{+} \to X_e^{P_{\lambda}}$ realizing $\Gr_{\lambda}^{+}$ as an affine bundle over $X_e^{P_{\lambda}}$ 

    \begin{proposition}\label{prop:reduction_to_flag_variety}
        The functor
        \[p_{\lambda}^* : \Kir_! (X_e^{P_{\lambda}}, \k) \to \Kir_! ( \Gr_{\lambda}^{+}, \k)\]
        is an equivalence of categories. In particular, $\Kir_! ( \Gr_{\lambda}^{+}, \k)$ is nonzero if and only if $\lambda \in \bfXv^{++}$.
    \end{proposition}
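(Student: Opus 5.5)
The plan is to show that $p_{\lambda}^*$ is an equivalence already at the level of the ambient equivariant derived categories,
\[p_{\lambda}^* : D^b_{U^+_{\chi} \rtimes \G_m}(X_e^{P_{\lambda}}, \k) \to D^b_{K_{\chi} \rtimes \G_m}(\Gr_{\lambda}^+, \k),\]
and then to check that this equivalence matches the two Kirillov models. Throughout, $I_u^+ \rtimes \G_m$ acts on the base through the projection $I_u^+ \twoheadrightarrow U^+$.

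\textbf{Step 1: the fibres of $p_\lambda$ are $N$-orbits.} Let $N \coloneq \ker(I_u^+ \twoheadrightarrow U^+)$. This is a normal pro-unipotent subgroup, stable under $\G_m$. Because $\chi_{I^+}$ factors through $U^+$, we have $N \subseteq K_{\chi}$ and $K_{\chi}/N \cong U^+_{\chi}$.

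Since $T$ fixes $L_{\lambda}$, the group $I_u^+$ acts transitively on $\Gr_{\lambda}^+ = I^+ \cdot L_{\lambda}$. The map $p_{\lambda}$ is $I_u^+ \rtimes \G_m$-equivariant, because loop rotation commutes with the constant loops and with $\zeta$. The base $X_e^{P_{\lambda}} = U^+ P_{\lambda}/P_{\lambda}$ is a homogeneous $U^+$-space with base point stabilizer $U^+ \cap P_{\lambda}$. This subgroup is contained in $P_{\lambda}$ and therefore fixes $L_{\lambda}$.

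Hence the stabilizer in $I_u^+$ of the fibre $p_{\lambda}^{-1}(P_{\lambda}/P_{\lambda})$ is $N \cdot (U^+ \cap P_{\lambda})$. It follows that this fibre equals $N \cdot L_{\lambda}$, and by translation every fibre of $p_{\lambda}$ is a single $N$-orbit. Moreover, $p_{\lambda}$ is an affine bundle, so it identifies $X_e^{P_{\lambda}}$ with the quotient $N \backslash \Gr_{\lambda}^+$.

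\textbf{Step 2: descent along a pro-unipotent quotient.} Here I would use the standard fact that if a normal pro-unipotent subgroup $N$ acts on a variety $Y$ with smooth quotient map $Y \to N\backslash Y$ whose fibres are $N$-orbits with contractible (unipotent) stabilizer quotients, then pullback gives an equivalence
\[D^b_{H/N}(N \backslash Y, \k) \simeq D^b_H(Y, \k).\]
This is the main technical point. I would handle it by choosing a normal finite-codimension subgroup $N' \subset N$ acting trivially on $\Gr_{\lambda}^+$, which is possible because $\Gr_\lambda^+$ is finite type and the action is nice. This reduces to an honest linear algebraic unipotent group $N/N'$ acting on $\Gr_{\lambda}^+$ with quotient an affine bundle.

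In that finite-type situation, the statement follows from the quotient equivalence for free actions, combined with the fact that the remaining unipotent stabilizers in the fibres are contractible. The contractibility means that forgetting equivariance along them is fully faithful, and every $K_\chi$-equivariant object is automatically $N$-equivariant. Applying this with $H = K_{\chi} \rtimes \G_m$ gives the displayed equivalence above.

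\textbf{Step 3: compatibility with the Kirillov condition.} Since $p_{\lambda}$ is $I_u^+ \rtimes \G_m$-equivariant, $p_{\lambda}^*$ commutes with $\Ind_!^{\chi}$ by smooth base change; this is the same remark made in the subsection on sheaf functors. We also have $I_u^+/K_{\chi} = U^+/U^+_{\chi}$. Consequently, an object lies in $\ker \Ind_!^{\chi}$ on the base if and only if its pullback lies in $\ker \Ind_!^{\chi}$ upstairs: the "if" direction uses that the equivalence of Step 2 reflects zero objects. This shows that the restriction $p_{\lambda}^* : \Kir_!(X_e^{P_\lambda}, \k) \to \Kir_!(\Gr_\lambda^+, \k)$ is an equivalence.

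\textbf{Step 4: the nonvanishing criterion.} By Proposition \ref{prop:strata_desc_of_Wh_for_flag_varieties}, $\Kir_!(X_e^{P_{\lambda}}, \k)$ is nonzero exactly when $P_{\lambda} = B^-$. This happens exactly when $\langle \alpha, \lambda \rangle \neq 0$ for every $\alpha \in \Phi_s$, and for dominant $\lambda$ that means $\lambda \in \bfXv^{++}$. Combined with Step 3, this gives the second assertion of the proposition.
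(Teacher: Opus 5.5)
Your proof is correct and follows the paper's route: the paper likewise asserts that $K_\chi$-equivariance makes $p_\lambda^*$ an equivalence $D^b_{U^+_\chi\rtimes\G_m}(X_e^{P_\lambda},\k)\simeq D^b_{K_\chi\rtimes\G_m}(\Gr_\lambda^+,\k)$, restricts it to the Kirillov models, and then invokes Proposition \ref{prop:strata_desc_of_Wh_for_flag_varieties}, and your Steps 1--3 fill in details the paper leaves implicit. Two small remarks: the $I_u^+$-equivariance of $p_\lambda$ comes from $t\cdot(gx)=(t\cdot g)(t\cdot x)$ with $t\cdot g\to g|_{z=0}$ as $t\to 0$, not from loop rotation commuting with $I_u^+$; and your condition $P_\lambda=B^-$ is the correct one, since the paper's ``$P_\lambda=B^+$'' is a typo.
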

    \begin{proof}
        Our $K_{\chi}$-equivariance ensures that 
        \[ p_{\lambda}^* : D_{U_{\chi}^+ \rtimes \G_m}^b (X_e^{P_{\lambda}}, \k) \to D_{K_{\chi} \rtimes \G_m}^b (\Gr_{\lambda}^{+}, \k)\]
        is an equivalence of categories. Clearly after restricting to objects in the Kirillov model, we obtain the desired equivalence of categories.

        Note that $P_{\lambda} = B^{+}$ precisely when $\lambda \in \bfXv^{++}$. As a result, the claimed result on the degeneracy of $\Kir ( \Gr_{\lambda}^{+}, \k)$ when $\lambda \notin \bfXv^{++}$ follows from Proposition \ref{prop:strata_desc_of_Wh_for_flag_varieties}.
    \end{proof}

    For each $\lambda \in \bfXv^{++}$ and $M \in \mod{\k}^{\fg}$, define sheaves
    \[\scrK_{\lambda}^! (M) \coloneq p_{\lambda}^* h_{e!} \overline{\chi}^* (u_* \underline{M}) \qquad\text{and}\qquad \scrK_{\lambda}^* (M) \coloneq p_{\lambda}^* h_{e*} \overline{\chi}^* (u_! \underline{M}).\] 
    Note that $\scrK_{\lambda}^* (M)$ is in $\Kir_* (\Gr_{\lambda}^+, \k)$ since $p_{\lambda}$, $\overline{\chi}$ are smooth and by Proposition \ref{prop:strata_desc_of_Wh_for_flag_varieties}.
    By Propositions \ref{prop:strata_desc_of_Wh_for_flag_varieties} and \ref{prop:reduction_to_flag_variety}, $\Kir_! (\Gr_{\lambda}^{+}, \k)$ (resp. $\Kir_* (\Gr_{\lambda}^{+}, \k)$) is generated by $\scrK_{\lambda}^! (\k)$ (resp. $\scrK_{\lambda}^* (\k)$).
    
    Note that there is a smooth morphism $\chi_{\lambda} : \Gr_{\lambda}^+ \to \A^1$ defined by $\chi_{\lambda} (u \cdot L_{\lambda}) = \chi (u)$ for $u \in I_u^+$.
    We have the following alternate description of the sheaves $\scrK_{\lambda}^! (M)$ and $\scrK_{\lambda}^* (M)$ given by isomorphisms
    \begin{equation}\label{eq:alt_desc_of_FLK}
        \scrK_{\lambda}^! (M) \cong \chi_{\lambda}^* u_* \underline{M} \qquad\text{and}\qquad \scrK_{\lambda}^* (M) \cong \chi_{\lambda}^* u_! \underline{M}.
    \end{equation}

\subsection{Standard and Costandard Objects}

    Iwahori--Whittaker sheaves on the affine Grassmannian have two notable features that make them easier to work with than sheaves in $P_{\L^+ G} (\Gr, \k)$.
    \begin{enumerate}
        \item When $\k$ is a field, $P_{\IW, ?} (\Gr, \k)$ is a highest weight category.
        \item The realization functor $D^b P_{\IW, ?} (\Gr, \k) \to D_{\IW, ?}^b (\Gr, \k)$ is an equivalence of categories.
    \end{enumerate}
    It is also true that $P_{\L^+ G} (\Gr, \k)$ is a highest weight category; however, the argument is rather involved since the $\L^+ G$-orbits are not affine spaces.
    The key difference with Iwahori--Whittaker sheaves is that the highest weight structure is essentially immediate from abstract sheaf theory (see \cite[\S 3.3]{BGS}). 
    The second point is blatantly false when Iwahori--Whittaker sheaves is replaced by $\L^+ G$-equivariant sheaves. 
    The goal of this subsection is to prove these two statements. 

    For $\lambda \in \bfXv^{++}$ and $M \in \mod{\k}^{\fg}$, we set
    \[\Delta_{\lambda}^{\IW, *} (M) \coloneq (j_{\lambda}^+)_! \scrK_{\lambda}^! (\k) [\langle \lambda, 2\rho \rangle] \qquad\text{and}\qquad \nabla_{\lambda}^{\IW, *} (M) \coloneq (j_{\lambda}^{+})_*^{\Kir} \scrK_{\lambda}^! (M) [\langle \lambda, 2\rho \rangle].\]
    These sheaves are called the \emph{standard} and \emph{costandard} Iwahori--Whittaker sheaves in $D_{\IW, !}^b (\Gr, \k)$.
    Note that the standard sheaves are perverse by \cite[Corollaire 4.1.3]{BBD} and the costandards sheaves are perverse by Lemma \ref{lem:costandards_and_perversity}.
    We can analogously define standard and costandard Iwahori--Whittaker sheaves in $D_{\IW, *}^b (\Gr, \k)$,
    \[\Delta_{\lambda}^{\IW, !} (M) \coloneq (j_{\lambda}^+)_!^{\Kir} \scrK_{\lambda}^* (M) [\langle \lambda, 2\rho \rangle] \qquad\text{and}\qquad \nabla_{\lambda}^{\IW, !} (M) \coloneq (j_{\lambda}^{+})_* \scrK_{\lambda}^* (M) [\langle \lambda, 2\rho \rangle].\]
    By Lemma \ref{lem:translation_and_sh_functors} and Example \ref{ex:translation_functor_for_A1}, there are isomorphisms
    \begin{equation}\label{eq:translation_of_std_costds}
        T_!^* \left( \Delta_{\lambda}^{\IW, !} (M)\right) \cong \Delta_{\lambda}^{\IW, *} (M) \qquad\text{and}\qquad T_!^* \left( \nabla_{\lambda}^{\IW, !} (M)\right) \cong \nabla_{\lambda}^{\IW, *} (M).
    \end{equation}
    Likewise, the standard and costandard sheaves are related by Verdier duality by the isomorphisms
    \begin{equation}\label{eq:DD_of_std_costds}
      \DD \left( \Delta_{\lambda}^{\IW, !} (\k)\right) \cong \nabla_{\lambda}^{\IW, *} (\k) \qquad\text{and}\qquad \DD \left( \nabla_{\lambda}^{\IW, !} (\k)\right) \cong \Delta_{\lambda}^{\IW, *} (\k).  
    \end{equation}
    We will almost always drop the $*$- and $!$-superscripts from the notation of the standard and costandard sheaves. One can infer from context whether the objects live in the $!$- or $*$-Kirillov model.
    
    For $\lambda, \mu \in \bfXv^{++}$, a standard argument (cf., \cite[\S 3.3]{BGS}) using Lemma \ref{lem:recollement} and Proposition \ref{prop:reduction_to_flag_variety} shows that
    \begin{equation}\label{eq:hom_std_costd}
        \Hom_{D_{\IW, ?}^b (\Gr, \k)} (\Delta_{\lambda}^{\IW} (M), \nabla_{\mu}^{\IW} (N) [n]) \cong \begin{cases} \Hom_{D^b (\mod{\k}^{\fg})} (M, N [n]) & \text{if }\lambda = \mu, \\ 0 & \text{otherwise.}\end{cases}
    \end{equation}    
    The natural transformation $(j_{\lambda}^{+})_! \to (j_{\lambda}^{+})_*^{\IW}$ induced by adjunction gives a canonical morphism in $\Delta_{\lambda}^{\IW, !} (M) \to \nabla_{\lambda}^{\IW, !} (M)$ which generates the Hom-space as a $\k$-module when $M = \k$.
    We will denote by $\IC_{\lambda}^{\IW, !} (M)$ the image of this generator. 
    Likewise, there is a canonical morphism $\Delta_{\lambda}^{\IW, *} (M) \to \nabla_{\lambda}^{\IW, *} (M)$ induced by adjunction that generates its Hom-space as a $\k$-module when $M = \k$.
    We denote the image of this map by $\IC_{\lambda}^{\IW, *} (M)$. 
    These sheaves are called the \emph{IC sheaves}.
    As with the standard and costandard objects, we will often drop the $!$- or $*$-superscript from the notation of the IC sheaves. 
    When $\k$ is a field, the objects $\{\IC_{\lambda}^{\IW} (\k)\}_{\lambda \in \bfXv^{++}}$ form a complete set of pairwise non-isomorphic  simple objects of the finite-length abelian category $P_{\IW, ?} (\Gr ,\k)$ (see \cite[Proposition 1.4.26]{BBD}).

    Note that since $\zeta$ is minimal in $\bfXv^{++}$ under the partial order $\preceq$, we have that 
    \begin{equation}\label{eq:min_IC_clean}
    \Delta_{\zeta}^{\IW} (M) \cong \IC_{\zeta}^{\IW} (M) \cong \nabla_{\zeta}^{\IW} (M).
    \end{equation}

    \begin{lemma}\label{lem:std_costds_generate}
        $D_{\IW, ?}^b (\Gr, \k)$ is generated by the set of standard sheaves $\{ \Delta_{\lambda}^{\IW} (\k) \}_{\lambda \in \bfXv^{++}}$, or alternatively, the set of costandard sheaves $\{ \nabla_{\lambda}^{\IW} (\k) \}_{\lambda \in \bfXv^{++}}$.
    \end{lemma}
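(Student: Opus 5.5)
The plan is a dévissage along the stratification $\Gr = \bigsqcup_{\lambda} \Gr_{\lambda}^+$, using the recollement of Lemma \ref{lem:recollement}. I treat $D_{\IW, !}^b (\Gr, \k)$ and the standard objects. The $*$-model then follows by applying the equivalence $T_!^*$ of Proposition \ref{prop:translation_functors_are_equivs}, which sends standards to standards and costandards to costandards by (\ref{eq:translation_of_std_costds}). The statement for costandards follows from the one for standards by applying Verdier duality $\DD$, which exchanges the two models and, by (\ref{eq:DD_of_std_costds}), exchanges $\Delta$ and $\nabla$.

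Let $\scrF \in D_{\IW,!}^b(\Gr,\k)$. Every object is supported on some closed $L^+G$-stable finite union $Z = \overline{\Gr^{\nu}}$, so it suffices to work on such $Z$. I induct on the number of $I^+$-orbits in the support of $\scrF$. Choose an orbit $\Gr_{\lambda}^+$ that is open in the support, and let $j = j_\lambda^+$ (restricted appropriately) with closed complement $i$.

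Lemma \ref{lem:recollement} gives a triangle $j_! j^* \scrF \to \scrF \to i_! i^* \scrF \to$. Here $i^*\scrF$ lies in the Kirillov model on a support with fewer orbits, so by induction it is in the triangulated subcategory generated by standards. Pushing forward by $i_!$ preserves standards, since $j_\mu^+$ factors through $i$ and $i_!$ is composed with a $!$-pushforward.

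For the remaining term, $j^*\scrF \in \Kir_!(\Gr_\lambda^+,\k)$:
\begin{itemize}
\item If $\lambda \notin \bfXv^{++}$, then $j^*\scrF = 0$ by Proposition \ref{prop:reduction_to_flag_variety}.
\item If $\lambda \in \bfXv^{++}$, then by Propositions \ref{prop:reduction_to_flag_variety} and \ref{prop:strata_desc_of_Wh_for_flag_varieties} the category $\Kir_!(\Gr_\lambda^+,\k)$ is equivalent to $D^b(\mod{\k}^{\fg})$ via $M \mapsto \scrK_\lambda^!(M)$.
\end{itemize}

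Since $\k$ is noetherian of finite global dimension, every object of $D^b(\mod{\k}^{\fg})$ is built in finitely many steps, by cones and shifts, from finite free modules. This uses finite projective resolutions of finitely generated modules. So $j^*\scrF$ lies in the triangulated category generated by $\scrK_\lambda^!(\k)$. Applying the triangulated functor $j_!$ puts $j_!j^*\scrF$ in the span of $\Delta_\lambda^{\IW}(\k)$. The triangle then places $\scrF$ in the span of standards, which closes the induction.

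The main obstacle is the bookkeeping on infinitely many strata, since the ind-scheme has infinitely many orbits. This is handled by the boundedness and support reduction to $\overline{\Gr^\nu}$ noted above, since $\overline{\Gr^\nu}$ contains only finitely many orbits. A secondary point is the finite-global-dimension argument needed to reduce from general $M$ to $M=\k$.
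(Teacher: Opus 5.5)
Your d\'evissage is the argument the paper intends: its proof only says the lemma is a routine application of Lemma \ref{lem:recollement} and Proposition \ref{prop:reduction_to_flag_variety}. The following parts of your argument are all fine:
\begin{itemize}
\item the support reduction;
\item the triangle $j_! j^* \scrF \to \scrF \to i_! i^* \scrF \to$;
\item the vanishing on non-strictly-dominant strata;
\item the transport to the $*$-model and to costandards via $T_!^*$ and $\DD$.
\end{itemize}

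The one step that is false as written is the claim that, since $\k$ is noetherian of finite global dimension, every object of $D^b(\mod{\k}^{\fg})$ is obtained from finite free modules by finitely many cones and shifts. Finite global dimension gives finite resolutions by finitely generated \emph{projective} modules, and these need not be free, or even stably free. Since $K_0$-classes are additive on triangles, every object of the triangulated subcategory generated by $\k$ has class in $\Z \cdot [\k] \subseteq K_0(\k)$. Over the Dedekind domain $\k = \Z[\sqrt{-5}]$, the non-principal ideal $P = (2, 1+\sqrt{-5})$ is projective but not stably free. So $[P] \notin \Z \cdot [\k]$, and $P$ is not in that subcategory.

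The same obstruction applies to the lemma itself. Consider the triangulated functor $(j_{\lambda}^+)^*$ followed by the equivalence $\Kir_!(\Gr_{\lambda}^+, \k) \simeq D^b(\mod{\k}^{\fg})$. It kills $\Delta_{\mu}^{\IW}(\k)$ for $\mu \neq \lambda$, because the strata are disjoint. It sends $\Delta_{\lambda}^{\IW}(\k)$ to a shift of $\k$. It sends $(j_{\lambda}^+)_! \scrK_{\lambda}^!(P)[\langle \lambda, 2\rho \rangle]$ to a shift of $P$. Hence the latter object is not in the triangulated subcategory generated by the $\Delta_{\mu}^{\IW}(\k)$.

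So the statement is only true if ``generated'' is read as ``generated as a thick subcategory'', i.e.\ also closing under direct summands. The paper glosses over the same point in Example \ref{ex:kir_on_A1}. With that reading, your proof works once you replace the false sentence by the following. Every finitely generated projective module is a direct summand of a finitely generated free one. So a bounded projective resolution exhibits every object of $D^b(\mod{\k}^{\fg})$ as a direct summand of an iterated cone of shifted finite free modules. Hence $\k$ generates $D^b(\mod{\k}^{\fg})$ as a thick subcategory.

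Thick subcategories are closed under cones. The triangulated functors $j_!$, $i_!$, $T_!^*$ and $\DD$ carry thick closures into thick closures. So the rest of your induction is unchanged.

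This weaker form is all that the later uses of the lemma need. Full faithfulness passes to direct summands, and $D^b P_{\L^+ G}(\Gr, \k)$ in Theorem \ref{thm:cs} is idempotent complete. Finally, if every finitely generated projective $\k$-module is stably free (e.g.\ $\k$ a field, a principal ideal domain, or a local ring), your original sentence is correct and the strict statement holds.
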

    \begin{proof}
        The lemma is a routine application of recollement (Lemma \ref{lem:recollement}) and Proposition \ref{prop:reduction_to_flag_variety}.
    \end{proof}

    \begin{lemma}\label{lem:t_structure_generation}
        Assume that $\k$ is a field.
        The perverse $t$-structure on $D_{\IW, ?}^b (\Gr, \k)$ is uniquely characterized by each of the following statements:
        \begin{enumerate}
            \item  ${}^p D_{\IW, ?}^b (\Gr, \k)^{\leq 0}$ is generated under extensions by $\Delta_{\lambda}^{\IW} (\k) [n]$ with $\lambda \in \bfXv^{++}$ and $n \geq 0$.
            \item ${}^p D_{\IW, ?}^b (\Gr, \k)^{\geq 0}$ is generated under extensions by $\nabla_{\lambda}^{\IW} (\k) [n]$ with $\lambda \in \bfXv^{++}$ and $n \leq 0$.
        \end{enumerate}
    \end{lemma}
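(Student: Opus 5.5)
We argue as for the standard characterization of the perverse $t$-structure on a category stratified by affine spaces (cf. \cite[\S 3.3]{BGS}), using the recollement of Lemma \ref{lem:recollement} together with the description of strata in Proposition \ref{prop:reduction_to_flag_variety}. Let $\mathcal{A}^{\leq 0}$ be the full subcategory generated under extensions by $\Delta_{\lambda}^{\IW}(\k)[n]$ with $n \geq 0$, and $\mathcal{B}^{\geq 0}$ the one generated by $\nabla_{\lambda}^{\IW}(\k)[n]$ with $n \leq 0$. Since a $t$-structure is determined by either of its aisles, it suffices to show $\mathcal{A}^{\leq 0} = {}^p D_{\IW, ?}^b (\Gr, \k)^{\leq 0}$ and $\mathcal{B}^{\geq 0} = {}^p D_{\IW, ?}^b (\Gr, \k)^{\geq 0}$. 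Replacing objects by their Verdier duals via (\ref{eq:DD_of_std_costds}) and $\DD : \Kir_! (\Gr, \k)^{\op} \stackrel{\sim}{\to} \Kir_* (\Gr, \k)$ (which exchanges the two perverse aisles), statement (2) for one model follows from statement (1) for the other. Moreover, $T_!^*$ is a $t$-exact equivalence by Proposition \ref{prop:translation_functors_are_equivs}, and it matches standards with standards by (\ref{eq:translation_of_std_costds}). It is therefore enough to prove (1) for a single model.

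The inclusion $\mathcal{A}^{\leq 0} \subseteq {}^p D^{\leq 0}$ is immediate: the standard objects are perverse, shifts by $n \geq 0$ stay in the aisle, and the aisle is closed under extensions. For the reverse inclusion, let $\scrF \in {}^p D^{\leq 0}$. Its support lies in a finite union of strata $\Gr_\mu^+$ (indeed in some $\overline{\Gr^\lambda}$). We induct on the number of strata, using the filtration by closed unions of orbits. Choose an open stratum $j = j_\lambda^+$ in the support, with closed complement $i$. Lemma \ref{lem:recollement} gives a triangle $j_! j^* \scrF \to \scrF \to i_* i^* \scrF \to$. In the $!$-model $i_!=i_*$ is just the pushforward, and $i^*$ preserves $\Kir_!$.

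The functors $j^*$ and $i^*$ are right $t$-exact for the perverse $t$-structure, since they are restrictions of the ordinary sheaf functors. Hence $i^*\scrF$ lies in ${}^pD^{\leq 0}$ on the smaller union, and by induction $i_*i^*\scrF \in \mathcal{A}^{\leq 0}$. On the stratum, $j^*\scrF \in \Kir_!(\Gr_\lambda^+,\k)$ is perverse-connective. By Propositions \ref{prop:reduction_to_flag_variety} and \ref{prop:strata_desc_of_Wh_for_flag_varieties}, the category $\Kir_!(\Gr_\lambda^+,\k)$ is $t$-exactly equivalent to $D^b(\mod{\k}^{\fg})$, with $\scrK^!_\lambda(\k)[\langle\lambda,2\rho\rangle]$ corresponding to $\k$. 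Since $\k$ is a field, $j^*\scrF$ is a finite direct sum of shifts $\scrK^!_\lambda(\k)[\langle\lambda,2\rho\rangle][n]$ with $n\ge 0$. Applying $j_!$ shows $j_!j^*\scrF \in \mathcal{A}^{\leq 0}$, and the triangle then gives $\scrF\in\mathcal{A}^{\leq0}$. If $\lambda\notin\bfXv^{++}$ the stratum contributes nothing, since $\Kir_!$ vanishes there.

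The main obstacle is the asymmetry of the sheaf functors on the Kirillov side. In the $!$-model we have $j_!$ and $i^*$ available, which is exactly what the $\Delta$-argument needs. The $\nabla$-argument, run directly in the $!$-model, would instead require $t$-exactness properties of $i^!_{\Kir}$ and $j_*^{\Kir}$, which are not evident. This is why statement (2) should be obtained through duality and $T_!^*$, as in the first step, rather than by a direct recollement argument. One must also check the $t$-exactness of the equivalence on each stratum; this is contained in Proposition \ref{prop:strata_desc_of_Wh_for_flag_varieties} together with the fact that $p_\lambda^*$ is a smooth pullback, so it is perverse $t$-exact after the shift by the relative dimension.
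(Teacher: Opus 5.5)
Your argument is correct, but it takes a different route from the paper.

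\textbf{The paper's route.} The paper defines both candidate halves at once:
\begin{itemize}
\item $D^{\leq 0}$, generated under extensions by $\Delta_\lambda^{\IW}(\k)[n]$ with $n\ge 0$;
\item $D^{\geq 0}$, generated under extensions by $\nabla_\lambda^{\IW}(\k)[n]$ with $n\le 0$.
\end{itemize}
It notes the easy inclusions into the perverse halves. It then invokes Bezrukavnikov's criterion for quasi-exceptional sets, \cite[Proposition 1]{Bez01}, to conclude that $(D^{\leq 0},D^{\geq 0})$ is itself a $t$-structure, which forces equality. This treats (1) and (2) symmetrically, separately in each model. The only geometric input is the orthogonality (\ref{eq:hom_std_costd}) together with generation. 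The real work, however, is delegated to an abstract theorem whose hypotheses are not spelled out.

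\textbf{Your route.} You prove the aisle statement in the $!$-model directly, by a BBD-style recollement induction over closed unions of orbits. The inputs are:
\begin{itemize}
\item the functors that genuinely preserve $\Kir_!$, namely $j_!$, $j^*$, $i^*$ and $i_*=i_!$;
\item the right $t$-exactness of $j^*$ and $i^*$;
\item the fact that, since $\k$ is a field, every object on a stratum is a direct sum of shifts of the perverse generator $\scrK^!_\lambda(\k)[\langle\lambda,2\rho\rangle]$.
\end{itemize}
You then move to the other model with $T_!^*$, using (\ref{eq:translation_of_std_costds}). You obtain the coaisle statements from Verdier duality via (\ref{eq:DD_of_std_costds}), using that $\DD$ swaps the perverse halves over a field.

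\textbf{Comparison.} Your approach is more elementary and self-contained. It also makes explicit how the $!$/$*$ asymmetry of the Kirillov model is handled. The cost is reliance on the translation and duality compatibilities. You could equally run the costandard induction directly in the $*$-model, since $j_*$ and $i^!$ preserve $\Kir_*$ and are left $t$-exact, and then apply $T_!^*$; this avoids duality.

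\textbf{One imprecision.} The support of a $K_\chi\rtimes\G_m$-equivariant object need not be a union of $I^+$-orbits. So the stratum should be chosen open in a closed finite union of orbits containing the support, not \emph{in the support} itself. This is what your filtration by closed unions of orbits already does.
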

    \begin{proof}
        Let $\text{D}^{\leq 0} \subseteq D_{\IW, ?}^b (\Gr, \k)$ be the smallest full subcategory that is stable under extensions and contains all $\Delta_{\lambda}^{\IW} (\k) [n]$ for $\lambda \in \bfXv^{++}$ and $n \geq 0$.
        Likewise, let $\text{D}^{\geq 0} \subseteq D_{\IW, ?}^b (\Gr, \k)$ be the smallest full subcategory that is stable under extensions and contains all $\nabla_{\lambda}^{\IW} (\k) [n]$ for  $\lambda \in \bfXv^{++}$ and $n \leq 0$.
        It is easy to see that $\text{D}^{\leq 0} \subset {}^p D_{\IW, ?}^b (\Gr, \k)^{\leq 0}$ and  $\text{D}^{\geq 0} \subset {}^p D_{\IW, ?}^b (\Gr, \k)^{\geq 0}$.
        To see that these containments are in fact equalities, it suffices to show that $(\text{D}^{\leq 0}, \text{D}^{\geq 0})$ defines a $t$-structure on $D_{\IW, ?}^b (\Gr, \k)$.
        This follows from \cite[Proposition 1]{Bez01}.
    \end{proof}

    \begin{lemma}\label{lem:h_wt_str}
        Assume that $\k$ is a field.
        The category $P_{\IW} (\Gr, \k)$ is a highest weight category with weight poset $(\bfXv^{++}, \preceq)$, standard objects $\{\Delta_{\lambda}^{\IW} (\k)\}_{\lambda \in \bfXv^{++}}$, and costandard objects $\{\nabla_{\lambda}^{\IW} (\k)\}_{\lambda \in \bfXv^{++}}$.
    \end{lemma}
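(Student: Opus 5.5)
We verify the axioms of a highest weight category in the sense of \cite[\S 3.2]{BGS} (equivalently Beilinson--Ginzburg--Soergel / Cline--Parshall--Scott), using the stratification $\Gr = \bigsqcup_{\lambda} \Gr_{\lambda}^+$. Only the strata with $\lambda \in \bfXv^{++}$ matter, by Proposition \ref{prop:reduction_to_flag_variety}. The plan runs through the standard formal consequences of the recollement of Lemma \ref{lem:recollement}, exactly as in \cite[\S 3.3]{BGS}. The only genuine input beyond abstract sheaf theory is that each stratum contributes a copy of $D^b(\mod{\k}^{\fg})$ with its standard $t$-structure, which is Propositions \ref{prop:strata_desc_of_Wh_for_flag_varieties} and \ref{prop:reduction_to_flag_variety}.

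\textbf{Step 1: basic structure and simples.} We already know that $P_{\IW}(\Gr,\k)$ is finite length and that its simples are exactly the $\IC_{\lambda}^{\IW}(\k)$ for $\lambda \in \bfXv^{++}$ (via \cite[Proposition 1.4.26]{BBD}). Each $\IC_\lambda^{\IW}(\k)$ is supported on $\overline{\Gr_\lambda^+}$, and its restriction to $\Gr_\lambda^+$ is $\scrK^!_\lambda(\k)[\langle\lambda,2\rho\rangle]$. The morphisms $\Delta_\lambda^{\IW}(\k) \twoheadrightarrow \IC_\lambda^{\IW}(\k) \hookrightarrow \nabla_\lambda^{\IW}(\k)$ are surjective and injective by the definition of $\IC$ as an image. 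We then show that the kernel of the first map and the cokernel of the second have composition factors $\IC_\mu^{\IW}(\k)$ only with $\mu \prec \lambda$. Both objects are supported on $\overline{\Gr_\lambda^+}\smallsetminus \Gr_\lambda^+ = \bigcup_{\mu\prec\lambda}\Gr_\mu^+$, because restriction to the open stratum is $t$-exact and $j^*$ takes all three maps to isomorphisms. For finiteness of the poset below $\lambda$ we use that $\{\mu \preceq \lambda\}$ is finite.

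\textbf{Step 2: Ext-vanishing.} The key homological axiom is
\[
\Ext^n(\Delta_\lambda^{\IW}(\k),\nabla_\mu^{\IW}(\k)) = \k \text{ if } n=0,\ \lambda=\mu, \text{ and } 0 \text{ otherwise},
\]
together with $\Ext^1(\Delta_\lambda, \Delta_\mu) = 0$ unless $\mu \succ \lambda$ (or the projective-cover formulation). The first is \eqref{eq:hom_std_costd} with $M = N = \k$. Since $\Ext^n$ in the heart maps injectively into $\Hom(-,-[n])$ in the triangulated category for $n \le 1$ (and agrees for $n=1$), this already gives $\Ext^1_{P_{\IW}}(\Delta_\lambda,\nabla_\mu)=0$ for all $\lambda,\mu$. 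We then invoke the criterion of \cite[Theorem 3.2.1]{BGS} (or \cite[\S 3.3]{BGS}). That criterion requires exactly three things: a finite-length category with simples indexed by a poset; objects $\Delta_\lambda$ and $\nabla_\lambda$ with the support and top/socle properties of Step 1; and $\Ext^{1,2}$-vanishing between $\Delta$ and $\nabla$. It then concludes that the category is highest weight, with enough projectives admitting $\Delta$-flags.

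\textbf{Step 3: the main obstacle.} I expect the main difficulty to be $\Ext^2$-vanishing in the abelian category, or equivalently the existence of projective covers. The abelian $\Ext^2$ only embeds in $\Hom_{D}(\Delta,\nabla[2])$, and the injectivity needed there is not formal. To get around this, I would construct the projectives directly. The method is the standard inductive one: order the finitely many relevant strata in a closed set $Z = \overline{\Gr_\lambda^+}$, and build $P_\lambda$ by successive universal extensions of $\Delta_\lambda$ by $\Delta_\mu$ with $\mu \succ \lambda$ inside $Z$. Each step uses only $\Hom_D(\Delta_\mu,\Delta_\nu[1])$, which is finitely generated, and the vanishing $\Hom_D(\Delta_\mu,\nabla_\nu[n])=0$ for $\mu\ne\nu$, which gives Ext-orthogonality. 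A projective object of the truncated category $P_{\IW}(Z,\k)$ is characterized by $\Ext^1(P,\nabla_\nu)=0$ for all $\nu$, which follows from the $\Delta$-flag and \eqref{eq:hom_std_costd}. This is precisely the BGS argument for stratifications with strata whose categories are equivalent to $D^b(\mod{\k}^{\fg})$. Lemma \ref{lem:t_structure_generation} supplies the compatibility of the glued $t$-structure with the standard one on each stratum that this argument needs.
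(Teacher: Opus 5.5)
Your Steps 1 and 2 are the paper's argument: the paper's proof simply feeds \eqref{eq:hom_std_costd} into the criterion of \cite[\S 3.3]{BGS}, exactly as you propose. The problem is Step 3, where you name the $\Ext^2$-axiom as the main obstacle. It is not an obstacle. For the heart $\mathcal{A}$ of any $t$-structure on a triangulated category $D$, the map $\Ext^n_{\mathcal{A}}(A,B)\to\Hom_D(A,B[n])$ is bijective for $n\le 1$ and \emph{injective} for $n=2$ \cite[Remarque 3.1.17]{BBD}. (You stated this shifted down by one.) The injectivity is purely formal. Take a Yoneda class spliced from $0\to B\to E\to C\to 0$ and $0\to C\to F\to A\to 0$, and suppose its image in $\Hom_D(A,B[2])$ vanishes. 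Then the class $A\to C[1]$ lifts to some $A\to E[1]$, which lies in $\Ext^1_{\mathcal{A}}(A,E)$. The splice is therefore zero, because the pullback of $0\to B\to E\to C\to 0$ along $E\to C$ splits. Hence $\Ext^2_{P_{\IW}}(\Delta^{\IW}_\lambda(\k),\nabla^{\IW}_\mu(\k))$ embeds in $\Hom(\Delta^{\IW}_\lambda(\k),\nabla^{\IW}_\mu(\k)[2])$, which is $0$ by \eqref{eq:hom_std_costd}. The BGS criterion you invoke in Step 2 then applies directly on each finite truncation $P_{\IW}(\Gr^+_{\preceq\nu},\k)$. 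You yourself wrote that $\Ext^2$ embeds in $\Hom_D(\Delta,\nabla[2])$, and that embedding is all the vanishing requires. Only surjectivity would be non-formal, and it is not needed.

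The workaround you sketch in place of this would also fail as written. The condition $\Ext^1(P,\nabla^{\IW}_\nu(\k))=0$ for all $\nu$ characterizes objects admitting a standard filtration, not projectives. Every $\Delta^{\IW}_\mu(\k)$ satisfies it, and these are not projective in general. Also, inside $Z=\overline{\Gr^+_\lambda}$ there is no $\mu\succ\lambda$, and $\Delta^{\IW}_\lambda(\k)$ is already projective in $P_{\IW}(Z,\k)$. The induction that does work is the paper's later Lemma~\ref{lem:existence_of_projs_for_IC}, following \cite{RSW}, and it runs the other way. It extends a projective $\scrP_\mu^{\prec\lambda}$ from the smaller closed piece by copies of $\Delta^{\IW}_\lambda(\k)$. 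The universal extension is what forces $\Ext^1(\scrP_\mu,\Delta^{\IW}_\lambda(\k))=0$, and projectivity is then checked against all simple objects rather than against costandards. For the field case at hand, delete Step 3 and replace it with the one-line $\Ext^2$ embedding above.
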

    \begin{proof}
        The lemma follows from a standard argument using (\ref{eq:hom_std_costd}). See \cite[\S 3.3]{BGS}.
    \end{proof}

    \begin{lemma}\label{lem:rh_perv_t_exact}
        The functor
        \[\scrS : D_{\IW, *}^b (\Gr, \C) \to \Whit^{\dR} (\Gr)\]
        is a $t$-exact equivalence of categories. 
    \end{lemma}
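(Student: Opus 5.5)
We already know from \S\ref{subsec:rh} that $\scrS$ is fully faithful, and that it commutes with $!$-pullbacks and $*$-pushforwards. It remains to show that $\scrS$ is essentially surjective and $t$-exact. The plan is to compare (co)standard objects on both sides and then use the generation results, Lemma \ref{lem:std_costds_generate} and Lemma \ref{lem:t_structure_generation}.

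First I would compute $\scrS(\nabla_\lambda^{\IW}(\C))$ for $\lambda \in \bfXv^{++}$. By definition,
\[\nabla_{\lambda}^{\IW} = (j_\lambda^+)_* \scrK_\lambda^*(\C)[\langle\lambda,2\rho\rangle], \qquad \scrK_\lambda^*(\C)\cong \chi_\lambda^* u_!\uk.\]
Since $\chi_\lambda$ is smooth, $\chi_\lambda^*$ agrees with $\chi_\lambda^!$ up to shift. Together with the compatibility of $\scrS$ with $!$-pullbacks and $*$-pushforwards, this reduces the computation to the stratum $\Gr_\lambda^+$. There the problem further reduces, via $\chi_\lambda$, to $\A^1$: we need the image of $u_!\uk[1]$ under $\Kir_*(\A^1,\C)\to\Kir_*^{\dR}(\A^1)\simeq \Whit^{\dR}(\A^1)$.

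On $\A^1$, the equivalence (\ref{eq:whit_vs_kir}) is $\Av_*^{\G_m}$ applied to the $\G_a$-Whittaker category. The latter is generated by $\exp$, and $\Av_*^{\G_m}(\exp)$ is the Fourier transform of a $\G_m$-equivariant delta-type object, namely $u_!\mathcal{O}[1]$ up to Riemann--Hilbert. Hence $u_!\uk[1]$ corresponds to $\exp$ (suitably shifted to be perverse). Consequently $\scrS(\nabla_\lambda^{\IW})$ is the Whittaker costandard object $j_{\lambda*}(\chi_\lambda^*\exp)[\langle\lambda,2\rho\rangle]$, which is perverse. For standard objects, I would use (\ref{eq:min_IC_clean}) and Verdier duality (\ref{eq:DD_of_std_costds}) together with duality on the de Rham side. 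Alternatively, and avoiding any claim that $\scrS$ commutes with $!$-pushforward, I would identify $\scrS(\Delta_\lambda^{\IW})$ through the characterization
\[\Hom(\Delta_\lambda,\nabla_\mu[n]) = \delta_{\lambda\mu}\delta_{n0}\,\C.\]
Since $\scrS$ is fully faithful and sends costandards to costandards, this characterizes its image as the standard object in the Whittaker category.

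Next, for essential surjectivity: on the de Rham side, $\Whit^{\dR}(\Gr)$ is generated by its costandard objects. This follows by the same recollement argument, since each $\Gr_\lambda^+$ supports a one-dimensional Whittaker category exactly when $\lambda\in\bfXv^{++}$, by the classical argument of \cite{ABBGM}. All these costandards lie in the image of $\scrS$, and that image is a triangulated subcategory because $\scrS$ is fully faithful and exact. Hence $\scrS$ is an equivalence.

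Finally, for $t$-exactness: by Lemma \ref{lem:t_structure_generation} (here $\k=\C$ is a field), the aisle ${}^pD^{\leq 0}$ and the coaisle ${}^pD^{\geq 0}$ of $D^b_{\IW,*}(\Gr,\C)$ are generated under extensions by the shifted standards and costandards, respectively. The analogous description holds on the Whittaker side by \cite[Proposition 1]{Bez01}, since standards there are perverse, $j_!$ being an affine embedding. Since $\scrS$ matches these generators, it is $t$-exact. I expect the main obstacle to be the $\A^1$ computation identifying $u_!\uk[1]$ with $\exp$ through $\Av^{\G_m}_*$ and Riemann--Hilbert, including getting the shift right so that perversity is preserved.
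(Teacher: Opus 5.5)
Your proposal is correct and follows essentially the same route as the paper. Both arguments show that $\scrS$ sends costandard objects to costandard objects, then use Lemma~\ref{lem:std_costds_generate} with full faithfulness for the equivalence, and Lemma~\ref{lem:t_structure_generation} for left $t$-exactness. The one difference is right $t$-exactness: you get it by also identifying $\scrS(\Delta_\lambda^{\IW})$ with the Whittaker standard object through the $\Hom$-characterization, whereas the paper deduces it from left $t$-exactness together with $\scrS$ being an equivalence (implicitly using that the Whittaker coaisle is also generated by costandards).
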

    \begin{proof}
        All the constructions and results so far from this section make sense for $\Whit^{\dR} (\Gr)$ as well (see \cite{ABBGM}).
        It can be easily checked that $\scrS$ takes costandard sheaves to costandard sheaves. By Lemma \ref{lem:std_costds_generate} and Lemma \ref{lem:t_structure_generation}, this implies that $\scrS$ is left $t$-exact and essentially surjective.
        We had already seen that $\scrS$ is fully faithful in \S\ref{subsec:rh}; therefore, $\scrS$ is an equivalence of categories. 
        The right $t$-exactness is then determined by the left $t$-exactness and the fact that $\scrS$ is an equivalence of categories.
    \end{proof}

    \begin{lemma}\label{lem:semisimple_char_0}
        Let $\k$ be a field of characteristic 0. Then $P_{\IW} (\Gr, \k)$ is semisimple. In particular, the natural morphisms
        \[\Delta_{\lambda}^{\IW} \to \IC_{\lambda}^{\IW} \to \nabla_{\lambda}^{\IW}\]
        are all isomorphisms.
    \end{lemma}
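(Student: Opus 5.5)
Since $P_{\IW}(\Gr,\k)$ is a highest weight category (Lemma \ref{lem:h_wt_str}), it suffices to show $\Ext^1_{P_{\IW}(\Gr,\k)}(\IC_\lambda^{\IW},\IC_\mu^{\IW})=0$ for all $\lambda,\mu\in\bfXv^{++}$. Equivalently, it suffices to show the canonical maps $\Delta_\lambda^{\IW}\to\IC_\lambda^{\IW}\to\nabla_\lambda^{\IW}$ are isomorphisms. Then every standard object is simple and also costandard, so by (\ref{eq:hom_std_costd}) we get $\Ext^1(\Delta_\lambda,\nabla_\mu)=0$, and every object is a direct sum of simples. Both statements are insensitive to field extension, because $\k'\otimes_\k(-)$ commutes with the constructions of $\Delta,\nabla$ and the canonical map (Lemma \ref{lem:eos_commutes_with_sh_functors}). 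So I reduce first to $\k=\Q$ and then to $\k=\C$. By Proposition \ref{prop:translation_functors_are_equivs} and (\ref{eq:translation_of_std_costds}), $T_!^*$ is a $t$-exact equivalence matching standards and costandards, so it is enough to treat the $*$-Kirillov model $P_{\IW,*}(\Gr,\C)$.

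Over $\C$, Lemma \ref{lem:rh_perv_t_exact} gives a $t$-exact equivalence $\scrS:D^b_{\IW,*}(\Gr,\C)\to\Whit^{\dR}(\Gr)$ sending costandards to costandards. Since it is an equivalence compatible with the recollement functors ($!$-pullbacks and $*$-pushforwards, and hence their adjoints), it also sends standards to standards and the canonical map to the canonical map. So the question is transferred to the de Rham Iwahori--Whittaker category. There, \cite{ABBGM} shows the Iwahori--Whittaker category is semisimple (cleanness of $\Delta_\lambda\to\nabla_\lambda$), and we can quote that result.

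If one prefers not to rely on the cleanness result of \cite{ABBGM}, here is an alternative argument inside the paper's framework. I would compare with Satake. By (\ref{eq:min_IC_clean}), $\Delta_\zeta\cong\nabla_\zeta$. Then convolution gives $\Delta_\zeta\star\scrJ_{!*}(\lambda,\C)$, which is perverse by Lemma \ref{lem:H_conv_t_exactness}. Since $P_{\L^+G}(\Gr,\C)$ is semisimple in characteristic $0$, $\scrJ_!(\lambda)\cong\scrJ_{!*}(\lambda)\cong\scrJ_*(\lambda)$. One would want $\Delta_\zeta\star\scrJ_!(\lambda)\cong\Delta_{\zeta+\lambda}$ and $\Delta_\zeta\star\scrJ_*(\lambda)\cong\nabla_{\zeta+\lambda}$, which together force $\Delta\cong\nabla$.

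The main obstacle in this alternative route is exactly those two identifications. I expect they require the stalk and costalk computations used in the proof of the main theorem, which is likely why quoting \cite{ABBGM} through Lemma \ref{lem:rh_perv_t_exact} is the preferable route here.
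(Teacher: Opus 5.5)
Your main route is the paper's own argument. The paper reduces to $\k=\C$ by extension of scalars, transports along the $t$-exact equivalence $\scrS$ of Lemma \ref{lem:rh_perv_t_exact} to the de Rham Iwahori--Whittaker category, and quotes \cite[Corollary 2.2.3]{ABBGM}; your extra checks (passing to the $*$-model via $T_!^*$, and verifying that $\scrS$ matches standards and the canonical map) only spell out what the paper leaves implicit. Your alternative Satake route is not needed, and it could not simply borrow Proposition \ref{prop:phi_on_std_costds}: the paper's proof of that proposition uses this very lemma, through the simplicity of $\Delta_{\zeta+\lambda}^{\IW}(\Q)$, to upgrade the surjection $f_\lambda^{\Q}$ to an isomorphism.
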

    \begin{proof}
        By a standard extension-of-scalars argument, it suffices to consider $\k = \C$.
        The result then follows from Lemma \ref{lem:rh_perv_t_exact} and \cite[Corollary 2.2.3]{ABBGM}.
    \end{proof}

    \begin{remark}
        It should be possible to give a proof of Lemma \ref{lem:semisimple_char_0} which avoids using the Riemann--Hilbert correspondence.
        One strategy would be to prove that $\IC_{\lambda}^{\IW}$ are parity sheaves in the sense of \cite{JMW}.
        Unfortunately, known methods for proving this (for example \cite[Lemma 3.4]{BGMRR}) make use of the decomposition theorem. 
        The decomposition theorem cannot be directly employed since the Fourier--Laumon kernel is not a simple perverse sheaf.  
    \end{remark}

    We say that an object $\scrF$ of $P_{\IW, ?} (\Gr, \k)$ has a \emph{standard filtration} (resp. \emph{costandard filtration}) if it admits a filtration whose subquotients are of the form $\Delta_{\lambda}^{\IW} (\k)$ (resp. $\nabla_{\lambda}^{\IW} (\k)$) for $\lambda \in \bfXv^{++}$.
    When $\k$ is a field, this notion agrees with the usual notion of (co-)standard filtrations in highest weight categories.
    Note that when $\k$ is not a field, we are only allowing $\k$-free (co-)standard objects to appear as subquotients in the filtration.

    For $\lambda \in \bfXv^+$, we will write
    \[\Gr_{\prec \lambda}^+ = \bigcup_{\mu \prec \lambda} \Gr_{\mu}^+ \qquad\text{and}\qquad \Gr_{\preceq \lambda}^+ = \bigcup_{\mu \preceq \lambda} \Gr_{\mu}^+.\]
    For a locally closed union $X$ of finitely many $I^+$-orbits in $\Gr$, we will write $P_{\IW, ?} (X, \k)$ for the perverse heart $\Kir_? (X, \k)$.
    The costandard, standard, and IC objects indexed by $\mu \prec \lambda$ (resp. $\mu \preceq \lambda$) for $\mu \in \bfXv^{++}$ can naturally be regarded as objects in $P_{\IW, ?} (\Gr_{\prec \lambda}^+, \k)$ (resp. $P_{\IW, ?} (\Gr_{\preceq \lambda}^+, \k)$).

    \begin{lemma}\label{lem:gens_for_perv}
        Any object of $P_{\IW, ?} (\Gr, \k)$ is a successive extension of $\IC_{\lambda}^{\IW} (M)$ for some $\lambda \in \bfXv^{++}$ and indecomposable $\k$-modules $M$.
    \end{lemma}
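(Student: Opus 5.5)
The plan is to induct on the number of $I^+$-orbits in the support, using the recollement of Lemma~\ref{lem:recollement} and the equivalence of Proposition~\ref{prop:reduction_to_flag_variety}. I will write the argument for $P_{\IW,!}$. The $*$-case is identical, or can be obtained by applying the $t$-exact equivalence $T_!^*$ of Proposition~\ref{prop:translation_functors_are_equivs} together with (\ref{eq:translation_of_std_costds}).

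\emph{Setup.} Let $\scrF \in P_{\IW,!}(\Gr,\k)$. Since $\scrF$ is constructible, it is supported on a closed union $X$ of finitely many $I^+$-orbits. Choose $\lambda$ maximal for $\preceq$ among the orbits meeting the support, and let $j = j_\lambda^+ : \Gr_\lambda^+ \hookrightarrow X$ (open in $X$), with closed complement $i : Z \hookrightarrow X$. If $\lambda \notin \bfXv^{++}$, then $j^*\scrF = 0$ by Proposition~\ref{prop:reduction_to_flag_variety}, and we simply shrink $X$. Otherwise:
\begin{enumerate}
\item $j^*$ is $t$-exact, so $j^*\scrF$ is perverse in $\Kir_!(\Gr^+_\lambda,\k)$.
\item By Propositions~\ref{prop:strata_desc_of_Wh_for_flag_varieties} and~\ref{prop:reduction_to_flag_variety} with Example~\ref{ex:kir_on_A1}, this category is $t$-exactly equivalent to $D^b(\mod{\k}^{\fg})$. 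Hence $j^*\scrF \cong \scrK^!_\lambda(M)[\langle\lambda,2\rho\rangle]$ for a finitely generated $\k$-module $M$.
\item Since $\k$ is noetherian, $M$ is a finite direct sum of indecomposables. Otherwise, repeatedly splitting off nontrivial summands would produce a strictly ascending chain of submodules.
\item The constructions $\Delta^{\IW}_\lambda(-)$, $\nabla^{\IW}_\lambda(-)$ and the canonical map between them are additive in $M$. Hence $\IC^{\IW}_\lambda(M) = \bigoplus \IC^{\IW}_\lambda(M_k)$, and a direct sum is an iterated extension. So it suffices to produce $\IC^{\IW}_\lambda(M)$ as a subquotient with the remaining pieces supported on $Z$.
\end{enumerate}

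\emph{Filtration.} Adjunction for $j^*\dashv j_*^{\Kir}$ gives a morphism $\phi : \scrF \to \nabla^{\IW}_\lambda(M)$ in the perverse heart. Here $\nabla$ is perverse by Lemma~\ref{lem:costandards_and_perversity}, since $\Gr_\lambda^+$ is affine. Set $\scrK = \ker\phi$ and $\scrF' = \operatorname{im}\phi$. Because $j^*\phi$ is an isomorphism and $j^*$ is exact, $j^*\scrK = 0$. The adjunction $j_! \dashv j^*$ gives $\psi : \Delta^{\IW}_\lambda(M) \to \scrF$, and by construction $\phi\circ\psi$ is the canonical map $\Delta^{\IW}_\lambda(M)\to\nabla^{\IW}_\lambda(M)$. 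Therefore $\IC^{\IW}_\lambda(M) = \operatorname{im}(\phi\psi) \subseteq \scrF'$. Since $j^*\IC^{\IW}_\lambda(M) \to j^*\scrF'$ is an isomorphism, $\scrC := \scrF'/\IC^{\IW}_\lambda(M)$ satisfies $j^*\scrC = 0$. We obtain the filtration
\[
\scrK \subseteq \phi^{-1}\bigl(\IC^{\IW}_\lambda(M)\bigr) \subseteq \scrF
\]
with subquotients $\scrK$, $\IC^{\IW}_\lambda(M)$ and $\scrC$. Kernels, images and cokernels make sense here because the heart of $\Kir_!$ is an abelian full subcategory of the perverse heart, closed under these operations (Lemma~\ref{lem:t_str_existence}).

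\emph{Induction.} By the recollement of Lemma~\ref{lem:recollement}, perverse objects of $\Kir_!(X,\k)$ killed by $j^*$ are of the form $i_!\scrG$ with $\scrG \in P_{\IW,!}(Z,\k)$. Here $i_!$ is $t$-exact and commutes with forming $\IC$'s, being a closed embedding. Thus $\scrK$ and $\scrC$ are supported on fewer orbits, and induction finishes the proof.

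The main obstacle I expect is justifying that $\phi\circ\psi$ really is the canonical map defining $\IC$, and that $\IC^{\IW}_\lambda(M)$, defined as an image in the heart, is compatible with restriction to $Z$ and with additivity in $M$. This compatibility is routine from the adjunction triangles of Lemma~\ref{lem:recollement}.
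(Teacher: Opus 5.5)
Your proof is correct. It isolates the $\IC$ piece by a different, somewhat more elementary mechanism than the paper.

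\textbf{The paper's route.} Following \cite{RSW}, the paper first takes the kernel $\scrK$ of the surjection $\scrF \twoheadrightarrow i_*\,{}^pH^0(i^*\scrF)$. It then divides $\scrK$ by the injection $i_*\,{}^pH^0(i^!_{\Kir}\scrK) \hookrightarrow \scrK$. The resulting $\scrQ$ has no nonzero subobject or quotient supported on the boundary, and the paper identifies it with $j^{\Kir}_{!*} j^*\scrQ$ using a Kirillov-model version of \cite[Corollaire 1.4.25]{BBD}. This route requires transporting BBD's results 1.4.17 and 1.4.25 to the recollement of Lemma \ref{lem:recollement}.

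\textbf{Your route.} You build $\IC^{\IW}_\lambda(M)$ directly as $\im(\phi\circ\psi)$, where $\psi : \Delta^{\IW}_\lambda(M) \to \scrF$ and $\phi : \scrF \to \nabla^{\IW}_\lambda(M)$ are the adjunction maps. You only need three facts:
\begin{enumerate}
\item the exactness of $j^*$;
\item the isomorphism $j^* j^{\Kir}_* \cong \mathrm{id}$;
\item the fact that objects killed by $j^*$ come from the boundary, via the recollement triangle.
\end{enumerate}
This avoids the truncated functors ${}^pH^0 i^*$ and ${}^pH^0 i^!_{\Kir}$, and the characterization of intermediate extensions, entirely.

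\textbf{Comparison.} The two filtrations have the same shape: a boundary piece, then $\IC$, then a boundary piece. In fact your $\ker\phi$ is the largest subobject of $\scrF$ supported on the boundary, since $\Hom(i_!\scrG, j^{\Kir}_* A) = \Hom(j^* i_!\scrG, A) = 0$. So your construction mirrors the paper's, which removes the largest boundary quotient first. The point you flagged as the main obstacle is not a problem. $\phi\circ\psi$ is the composite $j_! j^*\scrF \to \scrF \to j^{\Kir}_* j^*\scrF$ of counit and unit, and this is by definition the canonical map $j_! \to j^{\Kir}_*$ evaluated at $j^*\scrF$.

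\textbf{One point to tidy.} For $\Gr^+_\lambda$ to be open in $X$, $\lambda$ must be maximal among all orbits in $X$, not merely among those meeting the support. There are two easy fixes. You can take $X$ to be the union of the closures of the orbits meeting the support. Alternatively, choose $\lambda$ maximal in $X$ and shrink $X$ whenever $j^*\scrF = 0$; your argument already handles that case.
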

    \begin{proof}
        The proof of the lemma is closely based on \cite[Lemma 2.1.4]{RSW}. Since \emph{loc. cit.} only works with $\k$ being a Dedekind domain and since our theory of IC-sheaves is slightly exotic, we will provide the necessary details.
        We will just prove the case when $? = !$ as the case of $? = *$ is symmetric.
        Since every sheaf $\scrF \in P_{\IW, !} (\Gr, \k)$ is in $P_{\IW, !} (\Gr_{\preceq \lambda}^+, \k)$ for some $\lambda \in \bfXv^{+}$, we can prove the lemma using the following inductive refinement of the lemma.

        (\textbf{IH}) Every object $\scrF$ in $P_{\IW, !} (\Gr_{\preceq \lambda}^+, \k)$ is a successive extension of $\IC_{\mu}^{\IW} (M)$ for some $\mu \in \bfXv^{++}$ with $\mu \preceq \lambda$ and indecomposable $\k$-modules $M$.
        
        The base case when $\lambda = 0$ follows from Proposition \ref{prop:reduction_to_flag_variety}.
        Let $\lambda \in \bfXv^+$. Write $i : \Gr_{\prec \lambda}^+ \hookrightarrow \Gr_{\preceq \lambda}^+$ and $j : \Gr_{\lambda}^+ \hookrightarrow \Gr_{\preceq \lambda}^+$ for the inclusion maps.
        The functor $i_* : P_{\IW, !} (\Gr_{\prec \lambda}^+, \k) \to P_{\IW, !} (\Gr_{\preceq \lambda}^+, \k)$ is fully faithful, so we may assume by induction that any object in $P_{\IW, !} (\Gr_{\prec \lambda}^+, \k)$ is a successive extension of $\IC_{\mu}^{\IW} (M)$ for some $\mu \prec \lambda$ and indecomposable $\k$-modules $M$.
        Let $\scrF$ be an arbitrary object in $P_{\IW, !} (\Gr, \k)$.
       By \cite[Proposition 1.4.17 (ii)]{BBD}, the natural morphism $\scrF \to i_* {}^p H^0 (i^* \scrF)$ induced by adjunction is surjective. We denote its kernel by $\scrK$, and consider the short exact sequence of perverse sheaves
       \[0 \to \scrK \to \scrF \to i_* {}^p H^0 (i^* \scrF) \to 0.\]
       By induction, it then suffices to prove that $\scrK$ is a successive extension of $\IC_{\mu}^{\IW} (M)$ for some $\mu \preceq \lambda$ and indecomposable $\k$-modules $M$.
       Note that $\scrK$ has no non-zero quotient supported on $\Gr_{\lambda}^+$. By Lemma \ref{lem:translation_and_sh_functors} and \cite[Proposition 1.4.17 (ii)]{BBD}, the natural morphism $i_* {}^p H^0 (i_{\Kir}^! \scrK) \to \scrK$ is injective. We denote its cokernel by $\scrQ$.
       There is then a short exact sequence
       \[0 \to i_* {}^p H^0 (i_{\Kir}^! \scrK) \to \scrK \to \scrQ \to 0.\]
       By induction, it suffices to prove that $\scrQ$ is a successive extension of $\IC_{\mu}^{\IW} (M)$ for some $\mu \preceq \lambda$ and indecomposable $\k$-modules $M$.
       Moreover, $\scrQ$ has no non-trivial subobject or quotient supported on $\Gr_{\lambda}^+$. 
       Define the intermediate-extension functor
       \[j_{!*}^{\Kir} : P_{\IW, !} (\Gr_{\lambda}^+, \k) \to P_{\IW, !} (\Gr_{\preceq \lambda}^+, \k), \qquad \scrF \mapsto \im (j_! \scrF \to j_*^{\Kir} \scrF).\]
       Note that $\IC_{\lambda}^{\IW} (M) = j_{!*}^{\Kir} \scrK_{\lambda}^! (M) [\langle \lambda, 2\rho \rangle]$.
       The characterization of IC sheaves given in \cite[Corollaire 1.4.25]{BBD} can be easily translated to our setting, and in particular, we conclude that there is an isomorphism $\scrQ \cong j_{!*}^{\Kir} j^* \scrQ$. 
       By Proposition \ref{prop:reduction_to_flag_variety} and Proposition \ref{prop:strata_desc_of_Wh_for_flag_varieties}, we must have that $(j_{\lambda}^+)^* \scrQ$ is either 0 (when $\lambda \notin \bfXv^{++}$) or of the form $\scrK_{\lambda}^! (M) [\langle \lambda, 2\rho \rangle]$ where $M \in \mod{\k}^{\fg}$.
       Of course, $M$ is a direct sum of indecomposable $\k$-modules, and hence, the lemma follows.
    \end{proof}

    \begin{lemma}\label{lem:existence_of_projs_for_IC}
        For all $\lambda \in \bfXv^{++}$, there exists a projective object $\scrP$ in $P_{\IW, ?} (\Gr, \k)$ which admits a standard filtration and a surjection $\scrP \twoheadrightarrow \IC_{\lambda}^{\IW} (\k)$.
    \end{lemma}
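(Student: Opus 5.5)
We would follow the standard construction of projective covers in a category with a recollement and affine-space-like strata, as in \cite[\S 3.2]{BGS} and \cite[Lemma 2.1.6]{RSW}, now phrased in the Kirillov model. It suffices to treat $? = !$: the case $? = *$ then follows by applying $T_!^*$, since by Proposition \ref{prop:translation_functors_are_equivs} it is a $t$-exact equivalence, and by (\ref{eq:translation_of_std_costds}) it matches standard objects and, via images of the canonical maps, IC sheaves.

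First we pass to a finite closed union $Z = \Gr_{\preceq \nu}^+$ with $\lambda \preceq \nu$. We construct $\scrP$ in $P_{\IW,!}(Z,\k)$ and then push forward along the closed embedding $i : Z \hookrightarrow \Gr$. This is legitimate because $i_* = i_!$ is $t$-exact and fully faithful. It also preserves projectivity: $\Hom(i_*\scrP, -) \cong \Hom(\scrP, {}^pH^0 i^*(-))$, and ${}^pH^0 i^*$ is right exact on the heart. The pushforward also preserves standard filtrations and the surjection onto $\IC_\lambda^{\IW}(\k)$.

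Inside $Z$ we enumerate the strata $\Gr_\mu^+$ with $\mu \in \bfXv^{++}$, $\mu \preceq \nu$, compatibly with $\preceq$ (the other strata carry no objects by Proposition \ref{prop:reduction_to_flag_variety}). We start with $\scrP_0 = \Delta_\lambda^{\IW}(\k)$, which surjects onto $\IC_\lambda^{\IW}(\k)$ by definition. We then enlarge it by universal extensions. Suppose $\scrP_k$ is already built, has a standard filtration, and satisfies $\Ext^1(\scrP_k, \Delta_\mu^{\IW}(\k)) = 0$ for all $\mu$ already handled. Take the next $\mu$ and set $E = \Ext^1(\scrP_k, \Delta_\mu^{\IW}(\k))$. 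Form the universal extension
\[0 \to \Delta_\mu^{\IW}(E^\vee)\to \scrP_{k+1} \to \scrP_k \to 0.\]
Here one needs $E$ to be a finitely generated $\k$-module and, to stay within $\k$-free standard subquotients, $E$ free; otherwise one uses a free presentation and several copies of $\Delta_\mu^{\IW}(\k)$.

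The key vanishing input is $\Ext^n(\Delta_\mu^{\IW}(\k), \Delta_{\mu'}^{\IW}(\k)) = 0$ for $n \ge 1$ unless $\mu \prec \mu'$. This comes from the recollement (Lemma \ref{lem:recollement}) together with Proposition \ref{prop:reduction_to_flag_variety} and Proposition \ref{prop:strata_desc_of_Wh_for_flag_varieties}, because $\Kir_!(\Gr_\mu^+,\k)\simeq D^b(\mod{\k}^{\fg})$. Processing the strata from the top down (maximal $\mu$ first), each new extension kills the relevant $\Ext^1$ without reintroducing earlier ones.

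The end result $\scrP$ then satisfies $\Ext^1(\scrP, \Delta_\mu^{\IW}(\k)) = 0$ for all $\mu$ and also $\Ext^2(\scrP, \Delta^{\IW}_{\mu})=0$. The latter holds because each stratum is an affine-space-like piece and $\Ext^{\geq 1}$ between $\Delta$ and $\nabla$ vanishes by (\ref{eq:hom_std_costd}).

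Projectivity follows by combining these vanishings. $\Ext^1(\scrP,\nabla_\mu^{\IW}(M))=0$ for all $M$, by (\ref{eq:hom_std_costd}) and dévissage along the standard filtration. Every object of $P_{\IW,!}(Z,\k)$ is, by Lemma \ref{lem:gens_for_perv}, a successive extension of $\IC_\mu^{\IW}(M)$. One then shows $\Ext^1(\scrP,\IC_\mu^{\IW}(M))=0$ by induction on $\mu$, using the short exact sequences $0\to \IC_\mu\to\nabla_\mu\to C\to 0$ with $C$ supported on $\Gr^+_{\prec\mu}$. For this induction to close, one needs $\Ext^2(\scrP,-)$ to vanish on perverse objects supported on smaller strata. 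A cleaner route is to prove $\Ext^1(\scrP,\scrF)=0$ directly for every $\scrF$ in the heart, by induction on the support of $\scrF$ via the recollement triangle $i_!i^!_{\Kir}\scrF\to\scrF\to j_*^{\Kir}j^*\scrF$. On the open stratum this reduces to $\Ext^1$ into $j_*^{\Kir}$ of a shifted $\k$-module, which is controlled by the $\Ext^1(\scrP,\nabla)$-vanishing. The surjection $\scrP\twoheadrightarrow\scrP_0\twoheadrightarrow \IC_\lambda^{\IW}(\k)$ holds by construction.

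The main obstacle is working over a general noetherian ring $\k$ of finite global dimension rather than a field. There, $\Ext^1$-groups between standard objects need not be free, and we must keep the filtration subquotients of the form $\Delta^{\IW}_\mu(\k)$. The plan is to handle this by replacing $E^\vee$ with a finite free module surjecting onto the relevant group, following \cite{RSW}. The same issue appears in the projectivity check, where $\nabla^{\IW}_\mu(M)$ for non-free $M$ must be resolved using finite global dimension.
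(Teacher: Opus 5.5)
Your overall plan is the paper's: reduce to $?=!$ via $T_!^*$, then build $\scrP$ from $\Delta^{\IW}_\lambda(\k)$ by universal extensions, using a free cover of the relevant $\Ext^1$-module. The projectivity check, however, has a genuine gap. It never uses the vanishing $\Ext^1(\scrP,\Delta^{\IW}_\mu(\k))=0$ that the universal extensions were built to achieve.

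Both routes you spell out feed in only $\Ext^1(\scrP,\nabla^{\IW}_\mu(M))=0$ at the top stratum, plus induction on support. That vanishing holds for \emph{every} object with a standard filtration. For example, $\Delta^{\IW}_\lambda(\k)$ inside $P_{\IW,!}(\Gr^+_{\preceq\nu},\k)$ with $\lambda\prec\nu$ is in general not projective there, yet satisfies it. So no argument of this shape can work.

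Concretely:
\begin{enumerate}
\item The sequence $0\to\IC^{\IW}_\mu(M)\to\nabla^{\IW}_\mu(M)\to C\to0$ only gives a surjection $\Hom(\scrP,C)\twoheadrightarrow\Ext^1(\scrP,\IC^{\IW}_\mu(M))$. That is, it rewrites $\Ext^1$ as a cokernel of Homs, which is essentially the statement being proved.
\item In your ``cleaner route'', $i_!\,i^!_{\Kir}\scrF$ is not perverse. Its ${}^pH^1$ is $\textnormal{coker}(\scrF\to j^{\Kir}_*j^*\scrF)$, so $\Hom(\scrP,i_!i^!_{\Kir}\scrF[1])$ contains a degree-zero Hom and the induction does not close.
\end{enumerate}

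The missing ingredient is the paper's Steps 3--4.
\begin{itemize}
\item First prove $\Hom(\scrP,\scrF[2])=0$ for all perverse $\scrF$ on the truncation. This is where $\IC^{\IW}_\nu(M)\hookrightarrow\nabla^{\IW}_\nu(M)$ enters, combined with $\Ext^1$-vanishing against objects on lower strata and $\Hom(\scrP,\nabla^{\IW}_\nu(M)[2])=0$.
\item Then deduce $\Ext^1(\scrP,\IC^{\IW}_\mu(M))=0$ from the surjection $\Delta^{\IW}_\mu(M_{\textnormal{free}})\twoheadrightarrow\IC^{\IW}_\mu(M)$. Use $\Ext^1(\scrP,\Delta^{\IW}_\mu)=0$ for the middle term and the degree-$2$ vanishing for the perverse kernel.
\end{itemize}
Your asserted $\Ext^2(\scrP,\Delta^{\IW}_\mu)=0$ does not follow from $\Delta$--$\nabla$ orthogonality, which only controls $\Ext^2$ into $\nabla$'s. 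In any case, what is needed is vanishing against arbitrary perverse kernels, not just standards. The paper also inducts over the closed unions $\Gr^+_{\preceq\lambda}$. This way projectivity against objects on $\Gr^+_{\prec\lambda}$ is inherited from $\scrP^{\prec\lambda}_\mu$ (Step 1) instead of requiring a separate induction on support.

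Two further steps are wrong.
\begin{itemize}
\item \textbf{Order of extensions.} Your universal extensions go in the wrong order. Extending by $\Delta^{\IW}_\mu$ can create new classes in $\Ext^1(-,\Delta^{\IW}_{\mu'})$ when $\mu\prec\mu'$. So handling maximal $\mu$ first reintroduces classes you already killed. You must add strata in increasing order, which is what the paper does by enlarging $\Gr^+_{\prec\lambda}$ to $\Gr^+_{\preceq\lambda}$ and extending by the new maximal standard.
\item \textbf{Pushforward to $\Gr$.} Pushing forward along $i\colon Z\hookrightarrow\Gr$ does not preserve projectivity. Here $i^*$ is the \emph{left} adjoint of $i_*$; the correct formula is $\Hom(i_*\scrP,\scrF)\cong\Hom(\scrP,{}^pH^0 i^!_{\Kir}\scrF)$, and ${}^pH^0 i^!_{\Kir}$ is only left exact. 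For instance, $\Delta^{\IW}_\nu(\k)$ is projective in $P_{\IW,!}(\Gr^+_{\preceq\nu},\k)$. It stops being projective in a larger truncation as soon as some $\Ext^1(\Delta^{\IW}_\nu(\k),\Delta^{\IW}_{\nu'}(\k))\neq0$ with $\nu\prec\nu'$.
\end{itemize}
The paper's induction only produces projectives in the truncated categories $P_{\IW,!}(\Gr^+_{\preceq\lambda},\k)$. That is the statement to aim for, and it is what the realization argument uses one closed union at a time. Projectivity in all of $P_{\IW,?}(\Gr,\k)$ is not what the argument can deliver.
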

    \begin{proof}
       The proof of the lemma is closely based on \cite[Proposition 2.3.1]{RSW}. By Lemma \ref{lem:translation_and_sh_functors} and Proposition \ref{prop:translation_functors_are_equivs}, it suffices to just prove the $!$-Kirillov variant. 
       As in Lemma \ref{lem:gens_for_perv}, we will prove the lemma using the following inductive refinement.

        (\textbf{IH}) For all $\mu \preceq \lambda$ with $\mu \in \bfXv^{++}$, there exists a projective object $\scrP_{\mu} \in P_{\IW, !} (\Gr_{\preceq \lambda}^+, \k)$ which admits a standard filtration and a surjection $\scrP_{\mu} \to \IC_{\mu}^{\IW} (\k)$.

        The base case when $\lambda = 0$ is obvious. Let $\lambda \in \bfXv^+$ and assume the inductive hypothesis holds for all $\lambda' \prec \lambda$.
        We define $\scrP_{\lambda} = \Delta_{\lambda}^{\IW} (\k)$ when $\lambda \in \bfXv^{++}$. 
        It is clear from its definition that $\scrP_{\lambda}$ has a standard filtration and admits a surjective morphism $\scrP_{\lambda} \twoheadrightarrow \IC_{\lambda}^{\IW} (\k)$.
        We can first check that $\scrP_{\lambda}$ is projective. For any $\scrF \in P_{\IW, !} (\Gr_{\preceq \lambda}^+, \k)$, there is an isomorphism
        \[\Ext^1 (\scrP_{\lambda}, \scrF) \cong \Hom_{\Kir_! (\Gr_{\lambda}^+, \k)} (\scrK_{\lambda}^! [\langle \lambda, 2\rho \rangle ], (j_{\lambda}^+)^* \scrF [1])\]
        The right-hand side vanishes by Proposition \ref{prop:reduction_to_flag_variety} and Proposition \ref{prop:strata_desc_of_Wh_for_flag_varieties}.

        Now take $\mu \prec \lambda$. By induction, we have a projective object $\scrP_{\mu}^{\prec \lambda}$ in $P_{\IW, !} (\Gr_{\prec \lambda}^+, \k)$ which admits a standard filtration and a surjective morphism to $\Delta_{\mu}^{\IW} (\k)$.
        It remains to check that $\scrP_{\mu}^{\prec \lambda}$ can be enlarged to a projective object in  $P_{\IW, !} (\Gr_{\preceq \lambda}^+, \k)$ which admits a standard filtration and a surjective morphism to $\Delta_{\mu}^{\IW} (\k)$.
        To do so, consider $E = \Ext^1 (\scrP_{\mu}^{\prec \lambda}, \Delta_{\lambda}^{\IW} (\k))$ and let $E_{\textnormal{free}}$ be a finitely generated free $\k$-module with a surjection $E_{\textnormal{free}} \twoheadrightarrow E$.
        Let $E_{\textnormal{free}}^*$ denote the dual $\k$-module of $E_{\textnormal{free}}$.
        The sequence of canonical morphisms
        \[\k \to E_{\textnormal{free}}^* \otimes_{\k} E_{\textnormal{free}} \to E_{\textnormal{free}}^* \otimes_{\k} E \cong \Ext^1 (\scrP_{\mu}^{\prec \lambda}, E_{\textnormal{free}}^* \otimes_{\k} \Delta_{\lambda}^{\IW} (\k))\]
        gives rise to a distinguished extension
        \[0 \to E_{\textnormal{free}}^* \otimes_{\k} \Delta_{\lambda}^{\IW} (\k) \to \scrP_{\mu} \to \scrP_{\mu}^{\prec \lambda} \to 0.\]
        By induction $\scrP_{\mu}$ has a standard filtration. Moreover, it admits a surjection to $\IC_{\mu}^{\IW} (\k)$ via the composition $\scrP_{\mu} \twoheadrightarrow \scrP_{\mu}^{\prec \lambda} \twoheadrightarrow \IC_{\mu}^{\IW} (\k)$.
        To complete the proof of the lemma, it remains to check that $\scrP_{\mu}$ is projective. 

        \emph{Step 1.} $\Ext^1 (\scrP_{\mu}, \scrF) = 0$ for $\scrF \in P_{\IW} (\Gr_{\prec \lambda}^+, \k)$. From the definition of $\scrP_{\mu}$, there is a long exact sequence
        \[\ldots \to \Ext^1 (\scrP_{\mu}^{\prec \lambda}, \scrF) \to \Ext^1 (\scrP_{\mu}, \scrF) \to \Ext^1 (E_{\textnormal{free}}^* \otimes_{\k} \Delta_{\lambda}^{\IW} (\k), \scrF ) \to \ldots.\]
        The desired vanishing then follows from $\Delta_{\lambda}^{\IW} (\k)$ being projective in $P_{\IW} (\Gr_{\preceq \lambda}^+, \k)$ and $\scrP_{\mu}^{\prec \lambda}$ being projective in $P_{\IW} (\Gr_{\prec \lambda}^+, \k)$.

        \emph{Step 2.} $\Ext^1 (\scrP_{\mu}, \Delta_{\lambda}^{\IW} (\k)) = 0$. There is a long exact sequence
        \begin{align*}
            \ldots \to \Hom (E_{\textnormal{free}^* \otimes_{\k} \Delta_{\lambda}^{\IW} (\k), \Delta_{\lambda}^{\IW} (\k)}) \to \Ext^1& (\scrP_{\mu}^{\prec \lambda}, \Delta_{\lambda}^{\IW} (\k)) \to \Ext^1 (\scrP_{\mu}, \Delta_{\lambda}^{\IW} (\k)) \\
            &\to \Ext^1 (E_{\textnormal{free}^*} \otimes_{\k} \Delta_{\lambda}^{\IW} (\k), \Delta_{\lambda}^{\IW} (\k)) \to \ldots
        \end{align*}
        By adjunction, one has that $\Ext^1 (\Delta_{\lambda}^{\IW} (\k), \Delta_{\lambda}^{\IW} (\k)) = 0$, so it suffices to show that the first map is surjective.
        However, this map can be explicitly described under the canonical isomorphism 
        \[\Hom (E_{\textnormal{free}^* \otimes_{\k} \Delta_{\lambda}^{\IW} (\k), \Delta_{\lambda}^{\IW} (\k)}) \cong E_{\textnormal{free}} \otimes_{\k} (\Delta_{\lambda}^{\IW} (\k), \Delta_{\lambda}^{\IW} (\k)) \cong  E_{\textnormal{free}}\]
        as the map $E_{\textnormal{free}} \to E = \Ext^1 (\scrP_{\mu}^{\prec \lambda}, \Delta_{\lambda}^{\IW} (\k))$, which is surjective by construction.

        \emph{Step 3.} $\Hom (\scrP_{\mu}, \scrF [2]) = 0$ for all $\scrF \in P_{\IW} (\Gr_{\preceq \lambda}^+, \k)$. By Lemma \ref{lem:gens_for_perv}, it suffices to show that $\Hom (\scrP_{\mu}, \IC_{\nu}^{\IW} (M)) = 0$ for all $\nu \preceq \lambda$ and $M$ finitely generated $\k$-modules.
        Let $\scrQ$ denote the cokernel of the canonical inclusion $\IC_{\nu}^{\IW} (M) \to \nabla_{\mu}^{\IW} (M)$. We then have a long exact sequence
        \[\ldots \to \Ext^1 (\scrP_{\mu}, \scrQ) \to \Hom (\scrP_{\mu}, \IC_{\nu}^{\IW} (M) [2]) \to \Hom (\scrP_{\mu}, \nabla_{\nu}^{\IW} (M) [2]) \to \ldots.\]
        The first term vanishes from Step 1. Since $\scrP_{\mu}$ has a standard filtration, the third term will vanish by (\ref{eq:hom_std_costd}).

        \emph{Step 4.} $\Ext^1 (\scrP_{\mu}, \IC_{\lambda}^{\IW} (M)) = 0$ for all $M \in \mod{\k}^{\fg}$. 
        We can find a finitely generated free $\k$-module $M_{\textnormal{free}}$ with a surjection $M_{\textnormal{free}} \twoheadrightarrow M$.
It can be checked as in \cite[Lemma 3.3.5]{Ac} that the functor $\IC_{\lambda}^{\IW} (-) : \mod{\k}^{\fg} \to P_{\IW, !} (\Gr, \k)$ taking $N$ to $\IC_{\lambda}^{\IW} (N)$ takes surjective maps to surjective maps.
In particular, there is a surjective map $M_{\textnormal{free}} \otimes_{\k} \Delta_{\lambda}^{\IW} (\k) \to \IC_{\lambda}^{\IW} (\k)$. We denote its kernel by $\scrK$.
We can now consider the long exact sequence
\[\ldots \to \Ext^1 (\scrP_{\mu}, M_{\textnormal{free}} \otimes_{\k} \Delta_{\lambda}^{\IW} (\k)) \to \Ext^1 (\scrP_{\mu}, \IC_{\lambda}^{\IW} (M)) \to \Hom (\scrP_{\mu}, \scrK [2]) \to \ldots.\]
The first and third terms are zero by Steps 2 and 3 respectively.

Finally, we have seen from Steps 1 and 4 that $\Ext^1 (\scrP_{\mu}, \IC_{\nu}^{\IW} (M)) = 0$ for all $\nu \preceq \lambda$ and $M \in \mod{\k}^{\fg}$. We conclude from Lemma \ref{lem:gens_for_perv} that $\scrP_{\mu}$ is projective.
    \end{proof}

    \begin{lemma}\label{lem:enough_projectives}
        For any $\scrF \in P_{\IW, ?} (\Gr, \k)$, there exists a projective object $\scrP$ in $P_{\IW, ?} (\Gr, \k)$ which admits a standard filtration and a surjection $\scrP \twoheadrightarrow \scrF$.
        In particular, there are enough projective objects in $P_{\IW, ?} (\Gr, \k)$.
    \end{lemma}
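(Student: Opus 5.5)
The plan is to combine Lemma \ref{lem:gens_for_perv} with Lemma \ref{lem:existence_of_projs_for_IC}, working inside a finite closed piece $\Gr_{\preceq \lambda}^+$. As in the previous proof, Lemma \ref{lem:translation_and_sh_functors} and Proposition \ref{prop:translation_functors_are_equivs} let us treat only the $!$-Kirillov model. Given $\scrF \in P_{\IW, !} (\Gr, \k)$, choose $\lambda$ with $\scrF \in P_{\IW, !} (\Gr_{\preceq \lambda}^+, \k)$. It then suffices to find a projective $\scrP$ in $P_{\IW, !} (\Gr_{\preceq \lambda}^+, \k)$ with a standard filtration surjecting onto $\scrF$. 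This $\scrP$ remains projective in larger truncations $\Gr_{\preceq \lambda'}^+$, either by enlarging as in the proof of Lemma \ref{lem:existence_of_projs_for_IC} or by arguing directly for the full category with the inductively constructed objects. Since every object of $P_{\IW,!}(\Gr,\k)$ lies in some $\Gr_{\preceq\lambda'}^+$, the surjection persists.

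The first step produces, for each $\mu \preceq \lambda$ and each finitely generated $M$, a projective with standard filtration surjecting onto $\IC_\mu^{\IW}(M)$. Pick a surjection $\k^n \twoheadrightarrow M$. As in Step 4 of the previous lemma, $\IC_\mu^{\IW}(\k^n) \twoheadrightarrow \IC_\mu^{\IW}(M)$, and $\IC_\mu^{\IW}(\k^n) = \IC_\mu^{\IW}(\k)^{\oplus n}$, so $\scrP_\mu^{\oplus n}$ works. Both properties are closed under direct sums.

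The second step is an induction on the length of a filtration of $\scrF$ by IC sheaves, given by Lemma \ref{lem:gens_for_perv}. Take a short exact sequence $0 \to \scrF' \to \scrF \to \scrF'' \to 0$ in which $\scrF''$ is an IC sheaf $\IC_\mu^{\IW}(M)$ (or conversely) and $\scrF'$ has shorter length. By induction there are projectives $\scrP' \twoheadrightarrow \scrF'$ and $\scrP'' \twoheadrightarrow \scrF''$. Projectivity of $\scrP''$ lifts $\scrP'' \to \scrF''$ to $\scrP'' \to \scrF$. The map $\scrP' \oplus \scrP'' \to \scrF$ is then surjective by the snake lemma, and $\scrP' \oplus \scrP''$ is projective with a standard filtration. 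This is the standard horseshoe-type argument and needs only the existence of the filtration, not its order.

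The main obstacle I expect is bookkeeping across truncations. Projectivity was only established in $P_{\IW,!}(\Gr_{\preceq\lambda}^+,\k)$, and the "in particular" claim is about the whole category. I would handle this by making the statement relative to $\Gr_{\preceq\lambda}^+$, since any surjection onto $\scrF$ and all relevant extensions live there. The cleanest route is to note that Steps 1–4 of Lemma \ref{lem:existence_of_projs_for_IC} show that $\Ext^1(\scrP_\mu,-)$ vanishes on $P_{\IW,!}(\Gr_{\preceq\lambda}^+,\k)$. This vanishing is enough to lift maps to any $\scrF$ supported there, which is all the argument above uses. Enough projectives then follows immediately.
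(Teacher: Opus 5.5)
Your argument follows the paper's: you reduce via Lemma \ref{lem:gens_for_perv} to $\scrF = \IC_\mu^{\IW}(M)$, and you write out the horseshoe step that the paper leaves implicit. You then use Lemma \ref{lem:existence_of_projs_for_IC} for $M=\k$, and pass to general $M$ through a free cover $\k^n \twoheadrightarrow M$ and the surjection $\IC_\mu^{\IW}(\k)^{\oplus n}\twoheadrightarrow \IC_\mu^{\IW}(M)$ from Step 4 of that lemma.

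The error is the claim in your first paragraph that $\scrP$ ``remains projective in larger truncations,'' whether by enlarging or by arguing directly for the full category; this should be dropped.
\begin{itemize}
\item Enlarging, as in the proof of Lemma \ref{lem:existence_of_projs_for_IC}, produces a genuinely different object: an extension of $\scrP_\mu^{\prec\lambda}$ by copies of $\Delta_\lambda^{\IW}(\k)$. This is needed precisely because $\Ext^1(\scrP_\mu^{\prec\lambda},\Delta_\lambda^{\IW}(\k))$ need not vanish. So projectivity in $P_{\IW,?}(\Gr^+_{\preceq\lambda},\k)$ does not survive to $P_{\IW,?}(\Gr^+_{\preceq\lambda'},\k)$ for $\lambda'\succ\lambda$.
\item No argument can give projectivity in all of $P_{\IW,?}(\Gr,\k)$. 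Take $\k$ a field of characteristic $p>0$ and $G$ not a torus. The equivalence the paper is proving (Theorem \ref{thm:cs} combined with geometric Satake) identifies this category with $\Rep(G^\vee_\k)$. That category has no nonzero projective objects, so the ``in particular'' for the whole category cannot hold.
\end{itemize}

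The relative statement in your last paragraph is the correct formulation: for each $\lambda$, every object of $P_{\IW,?}(\Gr^+_{\preceq\lambda},\k)$ is a quotient of a projective object of that same category with a standard filtration. It is also exactly what the paper's proof establishes. The inductive hypothesis in Lemma \ref{lem:existence_of_projs_for_IC} only yields projectivity in the truncations, so the paper's wording about the whole category overstates what its argument gives. The truncated version is all Proposition \ref{prop:realization_is_equivalence} needs: run that argument on each $\Gr^+_{\preceq\lambda}$ and pass to the union. With that one sentence removed, your proof is fine.
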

    \begin{proof}
        By Lemma \ref{lem:gens_for_perv}, we may assume that $\scrF = \IC_{\lambda}^{\IW} (M)$ where $\lambda \in \bfXv^{++}$ and $M$ is an indecomposable $\k$-module.
        When $M = \k$, the desired projective object can be constructed by Lemma \ref{lem:existence_of_projs_for_IC}.
        For a general $M$, we can find a finitely generated free $\k$-module $M_{\textnormal{free}}$ along with a surjection $M_{\textnormal{free}} \twoheadrightarrow M$. 
        As in the proof of Step 4 in Lemma \ref{lem:existence_of_projs_for_IC}, we have a surjection $M_{\textnormal{free}} \otimes_{\k} \IC_{\lambda}^{\IW} (\k) \twoheadrightarrow \IC_{\lambda}^{\IW} (M)$.
        We are then done by the $M = \k$ case.
    \end{proof}

    \begin{proposition}\label{prop:realization_is_equivalence}
        The realization functor
        \[D^b P_{\IW, ?} (\Gr, \k) \to D_{\IW, ?}^b (\Gr, \k)\]
        is an equivalence of categories.
    \end{proposition}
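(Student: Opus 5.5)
We use the standard criterion of Beilinson (cf. \cite[Lemma 3.1.1 / Proposition 3.1.16]{BBD} and \cite[\S 3.2]{BGS}): the realization functor $D^b P_{\IW,?}(\Gr,\k) \to D_{\IW,?}^b(\Gr,\k)$ is fully faithful as soon as, for all $\scrF, \scrG$ in the heart, the natural maps $\Ext^n_{P_{\IW,?}}(\scrF,\scrG) \to \Hom_{D_{\IW,?}^b}(\scrF,\scrG[n])$ are isomorphisms. It suffices to prove that any class in $\Hom(\scrF,\scrG[n])$ with $n \geq 1$ is effaceable, i.e. killed by precomposition with some surjection $\scrP \twoheadrightarrow \scrF$ in the heart. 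By Lemma \ref{lem:translation_and_sh_functors} and Proposition \ref{prop:translation_functors_are_equivs} ($T_!^*$ is a $t$-exact equivalence commuting with standards), we may treat only $? = !$.

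For effaceability, take $\scrP \twoheadrightarrow \scrF$ as in Lemma \ref{lem:enough_projectives}, with $\scrP$ projective and admitting a standard filtration. The main claim is that
\[\Hom_{D_{\IW}^b}(\scrP,\scrG[n]) = 0 \quad\text{for all } n \geq 1 \text{ and all perverse } \scrG.\]
Then any class $\scrF \to \scrG[n]$ pulls back to zero on $\scrP$, giving effaceability.

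To prove the claim, first reduce to $\scrP = \Delta_\lambda^{\IW}(\k)$ via the standard filtration and the long exact sequence, although projectivity alone gives only $n = 1$. By Lemma \ref{lem:gens_for_perv}, further reduce to $\scrG = \IC_\mu^{\IW}(M)$. Since $\Delta_\lambda^{\IW}(\k) = (j_\lambda^+)_!\scrK_\lambda^!(\k)[\langle\lambda,2\rho\rangle]$, adjunction gives
\[\Hom(\Delta_\lambda^{\IW}(\k),\scrG[n]) \cong \Hom_{\Kir_!(\Gr_\lambda^+)}\bigl(\scrK_\lambda^!(\k)[\langle\lambda,2\rho\rangle],\,(j_\lambda^+)^*\scrG[n]\bigr).\]
Here $(j_\lambda^+)^*\scrG$ lies in ${}^pD^{\leq 0}$, and the stratum category is $D^b(\mod{\k}^{\fg})$ $t$-exactly by Propositions \ref{prop:reduction_to_flag_variety} and \ref{prop:strata_desc_of_Wh_for_flag_varieties}. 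The target therefore corresponds to a complex $N$ in degrees $\leq 0$, and $\Hom(\k, N[n]) = H^n(N) = 0$ for $n \geq 1$. This is exactly the computation used in Lemma \ref{lem:existence_of_projs_for_IC}, and it gives the vanishing directly without needing Lemma \ref{lem:gens_for_perv}. Filtrations then extend it to all of $\scrP$.

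Essential surjectivity follows because the image of a fully faithful triangulated functor is a triangulated subcategory, and it contains all $\Delta_\lambda^{\IW}(\k)$, which generate by Lemma \ref{lem:std_costds_generate}.

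The main subtlety is that the realization functor needs a filtered enhancement; we take the one inherited from $D^b_{K_\chi\rtimes\G_m}$, or work in $\infty$-categories. The other point to check is that the vanishing for $\Delta$ uses only the $!$-Kirillov adjunction $j_! \dashv j^*$, which is available, so no $\Kir$-modified functors are needed here.
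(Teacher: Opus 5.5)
There is a genuine gap at the adjunction step. In the $!$-Kirillov model the right adjoint of $(j_\lambda^+)_! \colon \Kir_!(\Gr_\lambda^+,\k) \to \Kir_!(\Gr,\k)$ is $(j_\lambda^+)^!_{\Kir}$, not $(j_\lambda^+)^*$. The functor $(j_\lambda^+)^*$ is instead left adjoint to $(j_\lambda^+)_*^{\Kir}$, and the two agree only when $j_\lambda^+$ is an open embedding. By Lemma~\ref{lem:translation_and_sh_functors}, $(j_\lambda^+)^!_{\Kir} \cong T_*^! \circ (j_\lambda^+)^! \circ T_!^*$ is left $t$-exact. So for perverse $\scrG$ the complex $N$ you obtain lies in degrees $\geq 0$, and $H^n(N)$ has no reason to vanish for $n \geq 1$.

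The computation in Lemma~\ref{lem:existence_of_projs_for_IC} that you cite is valid only because there the sheaf is supported on $\Gr^+_{\preceq\lambda}$, in which $\Gr^+_\lambda$ is open. In fact your main claim is false for individual standard objects. By \cite[Remarque 3.1.17]{BBD} it would make every $\Delta^{\IW}_\lambda(\k)$ projective in the whole heart $P_{\IW,!}(\Gr,\k)$. For a field $\k$ this forces $\Delta^{\IW}_\lambda(\k)\cong\IC^{\IW}_\lambda(\k)\cong\nabla^{\IW}_\lambda(\k)$ for all $\lambda$, i.e.\ semisimplicity of the heart, which fails in positive characteristic in general. So your reduction from $\scrP$ to its standard subquotients discards exactly the hypothesis needed beyond $n=1$.

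The missing ingredient is the paper's induction. It keeps $\scrP$ both projective and standardly filtered, and proves $\Hom(\scrP,\scrG[n])=0$ for $n\geq 1$ and perverse $\scrG$ by induction on $n$:
\begin{itemize}
\item The case $n=1$ is projectivity together with \cite[Remarque 3.1.17]{BBD}.
\item For the inductive step, Lemma~\ref{lem:gens_for_perv} reduces to $\scrG=\IC^{\IW}_\mu(M)$. One then uses the short exact sequence $0\to\IC^{\IW}_\mu(M)\to\nabla^{\IW}_\mu(M)\to\scrQ\to 0$ in the heart.
\item $\Hom(\scrP,\scrQ[n])$ vanishes by induction, and $\Hom(\scrP,\nabla^{\IW}_\mu(M)[n+1])$ vanishes by the standard filtration and (\ref{eq:hom_std_costd}).
\end{itemize}
Thus positive-degree vanishing for standards is only ever used against costandards, and dimension-shifting through costandards upgrades it to all perverse sheaves.

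With this substituted for your main claim, your effaceability framework and your essential-surjectivity argument go through. The effaceability formulation is even convenient here. The projectives of Lemma~\ref{lem:existence_of_projs_for_IC} are built as projectives of $P_{\IW,!}(\Gr^+_{\preceq\lambda},\k)$, and effacing a class $\scrF\to\scrG[n]$ only needs the vanishing for $\scrG$ supported on such a closed piece.
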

    \begin{proof}
        Our proof will closely follow \cite[Corollary 2.3.4]{RSW}. By a standard homological argument, it suffices to prove that the natural morphism
        \begin{equation}\label{eq:realization_is_equivalence_1}
            \Ext_{P_{\IW, ?} (\Gr, \k)}^i (\scrP, \scrF) \stackrel{\sim}{\to} \Hom_{D_{\IW, ?}^b (\Gr, \k)} (\scrP, \scrF [i])
        \end{equation}
        is an isomorphism for all $i \in \Z$, $\scrP$ projective in $P_{\IW, ?} (\Gr, \k)$ with a standard filtration, and $\scrF \in P_{\IW, ?} (\Gr, \k)$. 
        If $i \leq 0$, the isomorphism is obvious. We will prove by induction that when $i > 0$ then both sides of (\ref{eq:realization_is_equivalence_1}) vanish.
        For $i = 1$, the left-hand side vanishes since $\scrP$ is projective. On the other hand, (\ref{eq:realization_is_equivalence_1}) is an isomorphism by \cite[Remarque 3.1.17]{BBD}, so the right-hand side also vanishes.
        Assume the claim is known for some $i \geq 1$.
        By Lemma \ref{lem:gens_for_perv}, it suffices to consider the case when $\scrF = \IC_{\lambda}^{\IW} (M)$ where $\lambda \in \bfXv^{++}$ and $M$ is an indecomposable $\k$-module.
        We can consider the short exact sequence
        \[0 \to \IC_{\lambda}^{\IW} (M) \to \nabla_{\lambda}^{\IW} (M) \to \scrQ \to 0\]
        where $\scrQ$ is the cokernel of the first map. We can apply $R\Hom_{D_{\IW, ?}^b (\Gr, \k)} (\scrP, -)$ and taking cohomology to produce a long exact sequence
        \[\ldots \to \Hom (\scrP, \scrQ [i]) \to \Hom(\scrP, \IC_{\lambda}^{\IW} (M) [i+1]) \to \Hom (\scrP, \nabla_{\lambda}^{\IW} (M) [i+1]) \to \ldots.\]
        The first term vanishes by induction. The last term vanishes because $\scrP$ has a standard filtration using (\ref{eq:hom_std_costd}).
        We deduce that the middle term must also vanish.
    \end{proof}

    \section{Geometric Casselman--Shalika Equivalence}
    
    We are now ready to state the main result of the paper. The proof will occupy the remainder of the section. 

    \begin{theorem}\label{thm:cs}
        The functor 
        \[ \Phi_? : D_{\L^+ G}^b (\Gr, \k) \to D_{\IW, ?}^b (\Gr, \k)\] 
        defined by $\Phi_? (\scrF) \coloneq \Delta_{\zeta}^{\IW, ?} (\k) \star \scrF$ is perverse $t$-exact, and restricts to an equivalence of abelian categories
        \[P_{\L^+ G} (\Gr, \k) \to P_{\IW, ?} (\Gr, \k).\]
    \end{theorem}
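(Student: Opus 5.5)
We follow the strategy of \cite{ABBGM} and \cite{BGMRR}. By Proposition \ref{prop:translation_functors_are_equivs} and Lemma \ref{lem:iw_action_commutes_with_D_and_T}(2), combined with (\ref{eq:translation_of_std_costds}), there is a natural isomorphism $T_!^* \circ \Phi_! \cong \Phi_*$, and $T_!^*$ is a $t$-exact equivalence. It therefore suffices to treat one variant. Lemma \ref{lem:iw_action_commutes_with_D_and_T}(1), (\ref{eq:DD_of_std_costds}) and (\ref{eq:min_IC_clean}) give $\DD \circ \Phi_! \cong \Phi_* \circ \DD$, which lets us move freely between the two.

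\textbf{Step 1: $t$-exactness.} For a field $\k$, $t$-exactness follows immediately from Lemma \ref{lem:H_conv_t_exactness}, since $\Delta_\zeta^{\IW}(\k)$ is perverse and $\Phi$ is convolution on $D^b_{K_\chi\rtimes\G_m}$. For general $\k$, we first show that $\Phi$ commutes with $\k'\otimes^L_\k(-)$. This holds because convolution is built from pullbacks and proper pushforwards, and $\Delta_\zeta^{\IW}(\k)$ is obtained from the $\Z$-version by extension of scalars. Next, we check that perversity in $D_{\IW}^b(\Gr,\k)$ can be detected after $\otimes^L$ to the residue fields of $\k$; here $\k$ noetherian of finite global dimension and the sheaves are constructible. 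The pattern is \cite[Prop.~8.1]{MV} and Lemma \ref{lem:eos_and_spherical}. This yields right $t$-exactness of $\Phi$; left $t$-exactness follows by applying $\DD$ and using $\DD\Phi_!\cong\Phi_*\DD$.

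\textbf{Step 2: images of standard and costandard objects.} We aim to prove
\[\Phi(\scrJ_!(\lambda,\k))\cong\Delta^{\IW}_{\lambda+\zeta}(\k),\qquad \Phi(\scrJ_*(\lambda,\k))\cong\nabla^{\IW}_{\lambda+\zeta}(\k).\]
By Verdier duality and Lemma \ref{lem:eos_and_spherical}(2), the second isomorphism follows from the first in the other Kirillov model. By extension of scalars it suffices to take $\k=\Z$. The support of $\Delta_\zeta\star\scrJ_!(\lambda)$ lies in $\overline{\Gr^{\lambda+\zeta}}$.

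We then compute the $!$- and $*$-restrictions to the strata $\Gr^+_\mu$. The restriction to $\Gr^+_{\lambda+\zeta}$ should be $\scrK^!_{\lambda+\zeta}$, because the convolution map is an isomorphism over the open $I^+$-orbit. For $\mu\prec\lambda+\zeta$, the costalks should vanish, with $\Hom(\Delta_\mu,\Phi(\scrJ_!(\lambda))[n])$ computed via the recollement (Lemma \ref{lem:recollement}).

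Concretely, the restriction to $\Gr^+_\mu$ is controlled by the Kirillov-twisted cohomology of intersections $S_\nu\cap\Gr^\lambda$ (semi-infinite orbits). Since Artin vanishing is unavailable, we instead plan to reduce to $\k=\C$. Over $\C$, Lemma \ref{lem:rh_perv_t_exact} and \cite{ABBGM} give the statement. To pass to $\Z$, the costalks are finitely generated complexes, and vanishing after $\otimes^L\F_p$ for all $p$ and $\otimes\Q$ implies vanishing. For the $\F_p$ case, we need the argument to be characteristic-free. The main obstacle is here: establishing the costalk vanishing integrally without Artin vanishing. The approach to try first is the hyperbolic-localization/weight-functor argument of \cite[\S 3]{BGMRR}. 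That argument expresses $\Hom(\Delta_\mu^{\IW},\Phi(\scrF)[n])$ through a Kirillov-twisted compactly supported cohomology of $S_\mu\cap\Gr^\lambda$. Laumon's vanishing (\ref{eq:properties_of_FL_kernels}) then kills all contributions except a top-dimensional free one, giving a torsion-free answer concentrated in one degree, which matches the characteristic-$0$ computation.

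\textbf{Step 3: the equivalence.} With Steps 1 and 2 in hand, $\Phi$ sends $\scrJ_!(\lambda)$ to $\Delta_{\lambda+\zeta}$ and $\scrJ_*(\mu)$ to $\nabla_{\mu+\zeta}$, and $\lambda\mapsto\lambda+\zeta$ is a bijection $\bfXv^+\to\bfXv^{++}$. By Lemma \ref{lem:hom_vanishing_for_spherical} and (\ref{eq:hom_std_costd}), $\Phi$ induces isomorphisms on all $\Ext^n$ between standards and costandards. A d\'evissage on the objects these generate, using Lemma \ref{lem:std_costds_generate} and Proposition \ref{prop:realization_is_equivalence}, shows that the induced functor $D^bP_{\L^+G}\to D^bP_{\IW}\cong D^b_{\IW}$ is fully faithful on the relevant objects. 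This implies that $\Phi$ is fully faithful on $P_{\L^+G}(\Gr,\k)$. Essential surjectivity follows because every projective of $P_{\IW}$ has a standard filtration (Lemma \ref{lem:enough_projectives}) and $\Phi$ is exact and fully faithful with Ext$^1$ control: projectives lift to objects of $P_{\L^+G}$ with $\scrJ_!$-filtrations, and cokernels of maps between projectives lift as well.
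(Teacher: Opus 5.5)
\textbf{Gap: the key step of Step~2 is missing.} Step~2 carries essentially all of the content, and the mechanism you propose for it does not work.

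First, the direction is reversed. To identify $\Phi(\scrJ_!(\lambda,\k))$ with $\Delta^{\IW}_{\zeta+\lambda}(\k)$ you need the $*$-restrictions to the smaller strata to vanish, i.e.\ $\Hom(\Phi(\scrJ_!(\lambda,\k)),\nabla^{\IW}_{\zeta+\mu}(\k)[n])=0$ for $\mu\neq\lambda$. Vanishing of the costalks $\Hom(\Delta^{\IW}_{\nu},\Phi(\scrJ_!(\lambda,\k))[n])$ would make $\Phi(\scrJ_!(\lambda,\k))$ costandard. That would force $\Delta^{\IW}_{\zeta+\lambda}\cong\nabla^{\IW}_{\zeta+\lambda}$, which fails in positive characteristic.

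Second, Laumon's vanishing (\ref{eq:properties_of_FL_kernels}) cannot kill the unwanted terms. After Lemma~\ref{lem:desc_of_Phi} and base change, the relevant group is dual to $H^{\langle\lambda+\mu,2\rho\rangle+1}R\Gamma_{c,\G_m}$ of $(\chi^\zeta_\mu)^*u_*\k$ on $Y_{\lambda,\mu}=\Gr^\lambda\cap z^{-\zeta}\Gr^+_{\zeta+\mu}$. There is no $\G_a$-direction to integrate along there, and the kernel is not a local system. What the kernel does give is the excision triangle $s'_*\uk_{Y^0_{\lambda,\mu}}[-2]\to\uk_{Y_{\lambda,\mu}}\to(\chi^\zeta_\mu)^*u_*\k|_{Y_{\lambda,\mu}}\to$. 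Together with $\dim Y_{\lambda,\mu}\le\langle\lambda+\mu,\rho\rangle$, this identifies the critical non-equivariant group $H_c^{\langle\lambda+\mu,2\rho\rangle+1}$ with $H_c^{\langle\lambda+\mu,2\rho\rangle}(Y^0_{\lambda,\mu},\k)$. For $\k$ a field, that group vanishes if and only if $\dim Y^0_{\lambda,\mu}<\langle\lambda+\mu,\rho\rangle$, i.e.\ if and only if $\chi'_\mu$ is not identically zero on any top-dimensional component of $\Gr^\lambda\cap S_\mu$.

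This geometric fact is the missing idea (Lemma~\ref{lem:dim_Y_lambda_nu_0}). The paper deduces it from the Ng\^o--Polo/Frenkel--Gaitsgory--Vilonen geometric Casselman--Shalika formula for $D$-modules \cite{NP,FGV}. This is also how characteristic $0$ reaches the $\F_p$ case: the transfer is through a dimension estimate on varieties, not through base change of sheaves. That is why your ``reduce to $\C$'' plan cannot by itself handle $\F_p$.

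\textbf{Further issues.} You also pass over the $\G_m$-equivariance. Homs in the Kirillov model compute $R\Gamma_{c,\G_m}$, which is unbounded and is not $R\Gamma_c$. The paper first proves $t$-exactness and uses it to get vanishing of $R\Gamma_{c,\G_m}$ above degree $\langle\lambda+\mu,2\rho\rangle+1$ (Lemma~\ref{lem:hom_and_cohomology_std_costd}). It then uses the triangle of Lemma~\ref{lem:G_m_equiv_cohomology_comparison} to reduce the critical degree to the non-equivariant group above.

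Moreover, no integral stalk computation is needed. Over $\F_p$ and $\Q$, vanishing of $\Hom(\Phi_*(\scrJ_!(\lambda,\k)),\nabla^{\IW}_{\zeta+\mu}(\k))$ in degree $0$, together with the highest weight structure, makes the canonical map $\Delta^{\IW}_{\zeta+\lambda}(\k)\to\Phi_*(\scrJ_!(\lambda,\k))$ surjective. Semisimplicity (Lemma~\ref{lem:semisimple_char_0}) makes it an isomorphism over $\Q$, and a Nakayama/torsion argument finishes over $\Z$.

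There is a smaller slip in Step~1. For $\k$ not a field, $\DD$ does not preserve perversity (it exchanges the $p$ and $p^+$ $t$-structures). So left $t$-exactness does not follow from right $t$-exactness by duality; use the residue-field criterion with the height shift \cite[Lemma 3.2.7]{Ac} directly.

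Step~3 is fine, but the projective-lifting argument is unnecessary. Once $D^b P_{\L^+G}(\Gr,\k)\to D^b_{\IW,?}(\Gr,\k)$ is fully faithful, essential surjectivity is immediate from generation by the $\Delta^{\IW}_{\zeta+\lambda}$. The equivalence of hearts then follows from $t$-exactness.
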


     \subsection{Exactness and Explicit Description of \texorpdfstring{$\Phi_?$}{Phi}}

    \begin{lemma}\label{lem:Phi_t_exact}
        The functor $\Phi_?$ is perverse $t$-exact.
    \end{lemma}
    \begin{proof}
        There are two preliminary observations that we can make. First, if $\k \to \k'$ is a ring homomorphism, then there is a natural isomorphism
        \begin{equation}\label{eq:Phi_t_exact_1}
            \k' \otimes_{\k}^L \Phi_? (-) \cong \Phi_? (\k' \otimes_{\k}^L -).
        \end{equation}
        Second, we note that by Lemma \ref{lem:H_conv_t_exactness}, when $\k$ is a field, the functor $\Phi_?$ is perverse $t$-exact.

        We will first argue that $\Phi_?$ is right $t$-exact for general coefficients. Let $\scrF \in P_{\L^+G} (\Gr, \k)$.
        Let $n$ be the largest integer such that ${}^p H^n (\Phi_? (\scrF)) \neq 0$.
        By \cite[Lemma 3.2.6]{Ac}, we can find a ring homomorphism $\k \to \F$ with $\F$ a field such that ${}^p H^n (\F \otimes_{\k}^L \Phi_? (\scrF)) \neq 0$ and $\F \otimes_{\k}^L  \Phi_? (\scrF) \in {}^p D_{\IW}^b (\Gr, \F)^{\leq n}$.
        Moreover, the natural isomorphism (\ref{eq:Phi_t_exact_1}) shows that these conditions also hold for $\Phi_? (\F \otimes_{\k}^L \scrF)$.
        On the other hand, since $\F$ is a field and extension-of-scalars is right $t$-exact, we must have that $\Phi_? (\F \otimes_{\k}^L \scrF) \in {}^p D_{\IW}^b (\Gr, \F)^{\leq 0}$. We deduce that $n \leq 0$, and hence that $\Phi_? (\scrF) \in {}^p D_{\IW}^b (\Gr, \k)^{\leq 0}$. 

        We can now check that $\Phi_?$ is left $t$-exact. Again we take $\scrF \in P_{\L^+G} (\Gr, \k)$.
        Let $n$ be the smallest integer such that ${}^p H^n (\Phi_? (\scrF)) \neq 0$.
        By \cite[Lemma 3.2.7]{Ac}, we can find a prime ideal $\fr{p} \subset \k$ such that ${}^p H^{n - \textnormal{ht} (\fr{p})} (\kappa_{\fr{p}} \otimes_{\k}^L \Phi_? (\scrF)) \neq 0$ and $\kappa_{\fr{p}} \otimes_{\k}^L  \Phi_? (\scrF) \in {}^p D_{\IW}^b (\Gr, \kappa_{\fr{p}})^{\geq n - \textnormal{ht} (\fr{p})}$, where $\kappa_{\fr{p}}$ is the residue field of $\fr{p}$ and $\textnormal{ht} (\fr{p})$ is the height of $\fr{p}$.
        Moreover, (\ref{eq:Phi_t_exact_1}) shows that these conditions also hold for $\Phi_? (\kappa_{\fr{p}} \otimes_{\k}^L \scrF)$.
        On the other hand, again by \cite[Lemma 3.2.7]{Ac} and the field case, $\Phi_? (\kappa_{\fr{p}} \otimes_{\k}^L \scrF) \in {}^p D_{\IW}^b (\Gr, \kappa_{\fr{p}})^{\geq - \textnormal{ht} (\fr{p})}$.
        This shows that $n \geq 0$, and hence that $\Phi_? (\scrF) \in {}^p D_{\IW}^b (\Gr, \k)^{\geq 0}$.
    \end{proof}

    Recall that for $\lambda \in \bfXv^{++}$, there is a morphism $\chi_{\lambda} : \Gr_{\lambda}^+ \to \G_a$ defined by $\chi_{\lambda} (u \cdot L_{\lambda}) = \chi (u)$ for $u \in I_u^+$.
    From (\ref{eq:alt_desc_of_FLK}), we have isomorphisms $\scrK_{\zeta}^! (\k) \cong \chi_{\zeta}^* u_* \k$ and $\scrK_{\zeta}^* (\k) \cong \chi_{\zeta}^* u_! \k$.

    \begin{lemma}\label{lem:desc_of_Phi}
        Let $J$ be the stabilizer of the point $L_{\zeta}$ in $I_u^+$ and write $\phi_{\zeta} : \Gr \to \Gr$ for the automorphism $x \mapsto z^{\zeta} \cdot x$.
        Then there are natural isomorphisms of functors
        \[\Phi_! \cong \Av_{\Kir}^! \circ \Av_{J \rtimes \G_m, *}^{K_{\chi} \rtimes \G_m} \circ \phi_{\zeta *} \circ \For^{\L^+ G}_{z^{-\zeta} J z^{\zeta} \rtimes \G_m} [\langle \zeta, 2\rho \rangle - 1].\]        
        \[\Phi_* \cong \Av_{\Kir}^* \circ \Av_{J \rtimes \G_m, !}^{K_{\chi} \rtimes \G_m} \circ \phi_{\zeta *} \circ \For^{\L^+ G}_{z^{-\zeta} J z^{\zeta} \rtimes \G_m} [-\langle \zeta, 2\rho \rangle + 1].\]        
    \end{lemma}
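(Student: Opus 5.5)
The plan is to unwind the convolution $\Delta_{\zeta}^{\IW,?}(\k)\star\scrF$ geometrically, using that the orbit $\Gr_{\zeta}^+$ is closed in the support of the standard object. I treat $\Phi_!$ in detail; $\Phi_*$ then follows by Verdier duality from Lemma \ref{lem:iw_action_commutes_with_D_and_T}(1), $\DD\circ\Av_{\Kir}^!\cong\Av_{\Kir}^*\circ\DD$, and the fact that $\DD$ exchanges $*$- and $!$-averaging. This duality accounts for the sign change in the shift.

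\emph{Rewriting the kernel.} By (\ref{eq:min_IC_clean}), $\zeta$ is minimal in $\bfXv^{++}$. Hence $\Delta_{\zeta}^{\IW}(\k)\cong\nabla_{\zeta}^{\IW}(\k)$, and this object is supported on $\Gr_{\zeta}^+$, which is closed in its support (only non-$\bfXv^{++}$ orbits lie in the boundary, and there Proposition \ref{prop:reduction_to_flag_variety} kills the $!$-Kirillov model). So I may write
\[
\Delta_{\zeta}^{\IW}\cong (j^+_{\zeta})_*^{\Kir}\,\chi_{\zeta}^*u_*\uk[\langle\zeta,2\rho\rangle]
\cong \Av^!_{\Kir}\,(j^+_{\zeta})_*\chi_{\zeta}^*u_*\uk[\langle\zeta,2\rho\rangle].
\]
The orbit $\Gr_{\zeta}^+ = I_u^+\cdot L_{\zeta}\cong I_u^+/J$, and under the equivalence $D^b_{K_\chi\rtimes\G_m}(I_u^+/J)\simeq D^b_{J\rtimes\G_m}(K_\chi\backslash I_u^+)$ the sheaf $\chi_\zeta^*u_*\uk$ corresponds to $u_*\uk$ on $\A^1=I_u^+/K_\chi$. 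The next step is to pull $\Av^!_{\Kir}$ outside the convolution. This is legitimate because $\Av^!_{\Kir}=u_*\uk[1]\cstar(-)$ is a left action by $I_u^+$ and commutes with the right $\L^+G$-convolution, in the same way as Lemma \ref{lem:av_and_convolution}, using base change with $m$ proper.

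\emph{Unfolding the convolution.} Next I use $u_*\uk[1]\cong u_*\uk_{\G_m}[1]$ together with the excision triangle $u_*\uk[1]\to s_*\k\to\uk_{\A^1}[2]$. Applying $\Av^!_{\Kir}$ kills the term $\uk_{\A^1}[2]\star$, since that term equals $\Ind^\chi_!$ of something. Modulo this, $\Av^!_{\Kir}$ applied to (pushforward of $\chi_\zeta^*u_*\uk$) agrees with $\Av^!_{\Kir}$ applied to the $*$-averaging of the delta sheaf at $L_\zeta$. Concretely,
\[
(j^+_\zeta)_*\chi_\zeta^*u_*\uk[1]\ \rightsquigarrow\ \Av^{K_\chi\rtimes\G_m}_{J\rtimes\G_m,*}\,\delta_{L_\zeta}\ \text{up to } \Av_{\Kir}^!,
\]
because $K_\chi\cdot L_\zeta=\chi_\zeta^{-1}(0)$ and $\Av^{K_\chi}_{J,*}\delta_{L_\zeta}$ is the $*$-extension of the constant sheaf on this fiber. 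The shift $-1$ in the statement comes from exactly this step.

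Now convolve the delta sheaf $\delta_{L_\zeta}=\phi_{\zeta*}\delta_{L_0}$ with $\scrF$. By the definition of $\star$ (using $p,q$ smooth and $m$ proper), the result is $\delta_{L_\zeta}\star\scrF\cong\phi_{\zeta*}\scrF$, regarded as equivariant for the stabilizer $J\rtimes\G_m$. Since $\phi_\zeta$ intertwines $z^{-\zeta}Jz^{\zeta}$ with $J$, this equivariance is exactly $\For^{\L^+G}_{z^{-\zeta}Jz^\zeta\rtimes\G_m}$. Finally I commute $\Av^{K_\chi\rtimes\G_m}_{J\rtimes\G_m,*}$ past the convolution using Lemma \ref{lem:av_and_convolution}. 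Assembling all of this gives the formula for $\Phi_!$.

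\emph{Main obstacle.} The delicate step is the identification, after $\Av_{\Kir}^!$, of $\chi_\zeta^*u_*\uk[1]$ on the orbit with the $*$-average of the point delta along $K_\chi$, including the correct shift. I would first verify this on $\A^1$, where it reduces to Lemma \ref{lem:existence_of_kirillov_averaging}: there $\Av^!_{\Kir}(s_*\k)=u_*\uk[1]$. I would then transport it along the equivalence $D_{K_\chi\rtimes\G_m}(I_u^+/J)\simeq D_{\G_m}(\A^1)\otimes(\text{J-data})$, keeping careful track of the dimension shift $\langle\zeta,2\rho\rangle$ of the orbit.
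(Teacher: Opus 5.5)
Your proposal is correct. Its skeleton is the same as the paper's:
\begin{enumerate}
\item identify the kernel $\Delta^{\IW}_\zeta$ with $\Av^!_{\Kir}\Av^{K_\chi\rtimes\G_m}_{J\rtimes\G_m,*}\delta_{L_\zeta}[\langle\zeta,2\rho\rangle-1]$;
\item move both averaging functors past the right $\L^+G$-convolution;
\item use $\delta_{L_\zeta}\star\scrF\cong\phi_{\zeta*}\For\scrF$;
\item dualize to get $\Phi_*$.
\end{enumerate}
Where you genuinely differ is in how you prove the kernel identification in step 1.

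The paper argues indirectly. It computes $R\Hom(\Delta^{\IW}_\zeta,-)$ of the candidate object using the adjunctions $\iota_!\dashv\Av^!_{\Kir}$ and $\For\dashv\Av_*$. This reduces everything to the $\G_m$-equivariant stalk $s^*u_*\k\cong\Av^{\G_m}_!\k[-1]$ on a point, and the answer is $\k$. The paper then invokes Propositions \ref{prop:reduction_to_flag_variety} and \ref{prop:strata_desc_of_Wh_for_flag_varieties}: the Kirillov model on $\overline{\Gr^+_\zeta}$ is the derived category of $\k$-modules, generated by $\Delta^{\IW}_\zeta$, so the Hom computation pins the object down.

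You instead build the isomorphism directly. You start from $\Delta^{\IW}_\zeta\cong\nabla^{\IW}_\zeta=\Av^!_{\Kir}(j^+_\zeta)_*\chi_\zeta^*u_*\uk[\langle\zeta,2\rho\rangle]$. You then pull back the triangle $u_*\uk[1]\to s_*\k\to\uk_{\A^1}[2]$ along $\chi_\zeta$, push it forward to $\Gr$, and apply $\Av^!_{\Kir}$.

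What your route buys:
\begin{itemize}
\item It bypasses the equivariant stalk computation on $[\pt/\G_m]$, which is where the shift bookkeeping is most delicate.
\item It makes the origin of the $-1$ transparent: $u_*\uk[1]$ gets replaced by $s_*\k$.
\item It works verbatim for any $\lambda\in\bfXv^{++}$, giving $\nabla^{\IW}_\lambda\cong\Av^!_{\Kir}\Av_*\delta_{L_\lambda}[\langle\lambda,2\rho\rangle-1]$.
\end{itemize}
The price is that you need the cleanness (\ref{eq:min_IC_clean}) to pass from $\nabla^{\IW}_\zeta$ to $\Delta^{\IW}_\zeta$. The paper's last step rests on the same input, namely the vanishing of the Kirillov model on the boundary of $\Gr^+_\zeta$.

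One justification in your argument should be corrected. The third term after pushing forward is $(j^+_\zeta)_*\uk_{\Gr^+_\zeta}[2]$. $\Av^!_{\Kir}$ kills it, but not because it is literally $\uk_{\A^1}[2]\cstar(-)$ of something. The operation $\cstar$ is built from a $!$-pushforward and does not commute with $(j^+_\zeta)_*$. The actual reason is that this object is $I_u^+\rtimes\G_m$-equivariant. For such $\scrG$ the unit $\scrG\to\Ind^\chi_!\scrG$ is an isomorphism, so the triangle of Lemma \ref{lem:existence_of_kirillov_averaging} gives $\Av^!_{\Kir}\scrG=0$.

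For the same reason, the check on $\A^1$ in your last paragraph only handles the orbit $\Gr^+_\zeta$. Passing to $\Gr$ needs this equivariance argument, or equivalently $\Av^!_{\Kir}\circ f_*\cong f^{\Kir}_*\circ\Av^!_{\Kir}$, which follows from uniqueness of adjoints. Your middle paragraph effectively supplies this step.
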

    \begin{proof}
        We will just prove the description of $\Phi_!$. The description of $\Phi_*$ can be obtained by Verdier duality.
        Let $i : \{L_{\zeta}\} \hookrightarrow \Gr$ denote the inclusion map. 
        The sheaf $\delta_{L_{\zeta}} \coloneq i_* \k$ is naturally $J \rtimes \G_m$-equivariant since $L_{\zeta}$ is stabilized by $J \rtimes \G_m$. 
        We claim that there is an isomorphism
        \begin{equation}\label{eq:desc_of_Phi_1}
            \Delta_{\zeta}^{\IW} \cong \Av_{\Kir}^!  \Av_{J \rtimes \G_m, *}^{K_{\chi} \rtimes \G_m} \delta_{L_{\zeta}} [\langle \zeta, 2\rho \rangle - 1].
        \end{equation}
        We can compute
        \begin{align*}
            R\Hom_{D_{\IW, !}^b (\Gr, \k)} (\Delta_{\zeta}^{\IW}, \Av_{\Kir}^!  \Av_{J \rtimes \G_m, *}^{K_{\chi} \rtimes \G_m} \delta_{L_{\zeta}} [\langle \zeta, 2\rho \rangle - 1]) &\cong R\Hom_{D_{J \rtimes \G_m}^b (\Gr, \k)} (\Delta_{\zeta}^{\IW}, \delta_{L_{\zeta}} [\langle \zeta, 2\rho \rangle - 1]) \\
            &\cong R\Hom_{D_{J \rtimes \G_m}^b (\pt, \k)} (i^* (j_{\zeta}^+)_! \chi_{\zeta}^* u_* \k, \k [-1] ) \\
            &\cong R\Hom_{D_{J \rtimes \G_m}^b (\pt, \k)} (i^* (j_{\zeta}^+)_! \chi_{\zeta}^* u_* \k, \k [-1] ) \\ 
            &\cong R\Hom_{D_{\G_m}^b (\pt, \k)} (s^* u_* \k, \k [-1]) \\
            &\cong R\Hom_{D_{\G_m}^b (\pt, \k)} (\Av_!^{\G_m} \k [-1], \k [-1]) \\
            &\cong R\Hom_{D_{\cons}^b (\pt, \k)} (\k [-1], \k [-1]) \\
            &\cong \k.
        \end{align*}
        The isomorphism in (\ref{eq:desc_of_Phi_1}) then follows from Proposition \ref{prop:reduction_to_flag_variety} and Proposition \ref{prop:strata_desc_of_Wh_for_flag_varieties}.

        Let $\scrF \in D_{\L^+ G}^b (\Gr, \k)$. By Lemma \ref{lem:av_and_convolution} and (\ref{eq:desc_of_Phi_1}), we have natural isomorphisms
        \[\Phi_! (\scrF) \cong \Av_{\Kir}^! \circ \Av_{J \rtimes \G_m, *}^{K_{\chi} \rtimes \G_m} (\delta_{L_{\zeta}} \star \scrF) [\langle \zeta, 2\rho \rangle - 1].\]
        To complete the lemma, one can observe from the definitions that $\delta_{L_{\zeta}} \star \scrF \cong \phi_{\zeta *} \circ \For^{\L^+ G}_{z^{-\zeta} J z^{\zeta} \rtimes \G_m}$.
    \end{proof}

    \subsection{Reducing Equivariant Cohomology}

    Recall that there are functors of $\G_m$-invariants and $\G_m$-coinvariants,
    \[\Inv_{\G_m *} : D_{\G_m}^b (\pt, \k) \to D^+ (\mod{\k}^{\fg}) \qquad\text{and}\qquad \Inv_{\G_m !} : D_{\G_m}^b (\pt, \k) \to D^- (\mod{\k}^{\fg}).\]
    These functors (or rather their truncated versions) are thoroughly treated in \cite[Chapter 6]{Ac}.
    To define them, we will instead use the language of algebraic stacks.
    Under the identification $D_{\cons}^+ (\pt, \k) \cong D^+ (\mod{\k}^{\fg})$, $\Inv_{\G_m *}$ corresponds to the $*$-pushforward along the map $q : [\pt/\G_m] \to \pt$. 
    Note that the presence of non-bounded complexes of $\k$-modules occurs since $q$ is not representable.
    Likewise, under the identification $D_{\cons}^+ (\pt, \k) \cong D^+ (\mod{\k}^{\fg})$, the functor $\Inv_{\G_m !}$ corresponds to $q_! [-2]$.

    Let $X$ be a $\G_m$-variety and write $a : X \to \pt$ for the canonical map. We define functors
    \[R\Gamma_{\G_m} \coloneq \Inv_{\G_m *} \circ a_* : D_{\G_m}^b (X, \k) \to D^+ (\mod{\k}^{\fg}),\] 
    \[R\Gamma_{c, \G_m} \coloneq \Inv_{\G_m !} \circ a_! : D_{\G_m}^b (X, \k) \to D^- (\mod{\k}^{\fg}). \] 
    The notation here is a non-standard, since the cohomology of $R\Gamma_{c, \G_m}$ does not compute the $\G_m$-equivariant cohomology with compact support.
    Instead, the cohomology of $R\Gamma_{c, \G_m}$ computes the cohomology with compact support of sheaves on the quotient stack $[\G_m \backslash X]$ (up to a shift). 
    We favor the above interpretation since it behaves well with Verdier duality and $\G_m$-averaging. 
    Namely, there are isomorphisms 
    \[ \DD \circ R\Gamma_{\G_m} \cong R\Gamma_{c, \G_m} \circ \DD,\]
    \[ R\Gamma \cong R\Gamma_{\G_m} \circ \Av_*^{\G_m}, \qquad\text{and}\qquad R\Gamma_c \cong R\Gamma_{c, \G_m} \circ \Av_!^{\G_m}\]

    \begin{lemma}\label{lem:G_m_equiv_cohomology_comparison}
        Let $X$ be a $\G_m$-variety and $\scrF \in D_{\G_m}^b (X, \k)$. 
        There is a distinguished triangle,
        \[R\Gamma_c (\For^{\G_m} (\scrF)) [-2] \to R\Gamma_{c, \G_m} (\scrF) [-2] \to R\Gamma_{c, \G_m} (\scrF) \to.\]
    \end{lemma}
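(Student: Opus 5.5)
The plan is to reduce to the case $X = \pt$ and then derive the triangle from an excision triangle on the line bundle $[\A^1/\G_m] \to [\pt/\G_m]$. In other words, this should be the Gysin (Euler class) sequence for the $\G_m$-torsor $\pi : \pt \to [\pt/\G_m]$.

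\emph{Reduction to a point.} The functor $a_!$ commutes with $\For^{\G_m}$, since forgetting equivariance is pullback along a smooth cover and base change applies. By definition $R\Gamma_{c,\G_m} = \Inv_{\G_m !} \circ a_!$ and $R\Gamma_c \circ \For^{\G_m} = \For^{\G_m} \circ a_!$ (under $D^b_{\cons}(\pt,\k) \cong D^b(\mod{\k}^{\fg})$). Putting $M \coloneq a_! \scrF \in D_{\G_m}^b (\pt, \k)$, it therefore suffices to construct, functorially in $M$, a distinguished triangle
\[\For^{\G_m}(M)[-2] \to \Inv_{\G_m !}(M)[-2] \to \Inv_{\G_m !}(M) \to .\]
Recall that $\Inv_{\G_m !} = q_![-2]$ with $q : [\pt/\G_m] \to \pt$, and that $\For^{\G_m} = \pi^*$ with $q \circ \pi = \id_{\pt}$. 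After shifting by $[2]$, the triangle we need is
\[\pi^* M \to q_! M[-2] \to q_! M \to .\]

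\emph{The excision triangle.} Let $p : [\A^1/\G_m] \to [\pt/\G_m]$ be the projection, where $\G_m$ acts on $\A^1$ by dilation. Let $s : [\pt/\G_m] \to [\A^1/\G_m]$ be the zero section and $u : [\G_m/\G_m] \cong \pt \to [\A^1/\G_m]$ the open complement, so that $p \circ u = \pi$. Excision applied to $p^* M$ gives
\[u_! u^* p^* M \to p^* M \to s_* s^* p^* M \to .\]
We then apply $p_!$ and identify each of the three terms.
\begin{enumerate}
\item For the first term, $p_! u_! u^* p^* M \cong \pi_! \pi^* M$.
\item For the middle term, $p$ is an $\A^1$-bundle, so $p_! p^* M \cong M \otimes p_! \uk \cong M[-2]$. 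Here we use the projection formula together with $p_!\uk \cong \uk[-2]$, which can be checked after smooth base change to $\A^1 \to \pt$.
\item For the last term, $s^*p^* = \id$ and $p_! s_* = \id$, so $p_! s_* s^* p^* M \cong M$.
\end{enumerate}
This yields a triangle $\pi_! \pi^* M \to M[-2] \to M \to$ on $[\pt/\G_m]$, whose second map is cup product with the Euler class. Applying $q_!$ and using $q_! \pi_! = (q\pi)_! = \id$ gives exactly $\pi^* M \to q_! M[-2] \to q_! M \to$, as required.

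\emph{Main obstacle.} The step needing the most care is justifying the six-functor manipulations on the non-representable map $q$, in particular the identification $p_!p^* \cong (-)[-2]$ and the compatibility $q_!\pi_! \cong \id$. The paper's convention sets $\Inv_{\G_m !} = q_![-2]$ and allows unbounded-above complexes. I would handle this in the same framework as \cite[Chapter 6]{Ac}: either work with truncations, or model $[\pt/\G_m]$ by the finite-dimensional approximations $\C^N\setminus 0 \to \mathbb{P}^{N-1}$. On each approximation the triangle above is the classical Gysin sequence of the $\C^\times$-bundle $\C^N\setminus 0\to\mathbb{P}^{N-1}$, and it is compatible in $N$. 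Functoriality in $\scrF$ follows because every map used is a natural transformation.
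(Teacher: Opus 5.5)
Your proposal is correct and takes essentially the same route as the paper. Both produce the triangle from the excision triangle for the zero section of the line bundle associated to the $\G_m$-torsor, pushed forward along the bundle projection using $p_! p^* \cong (-)[-2]$. The one difference is the order of steps: you first apply $a_!$ to reduce to $X = \pt$, whereas the paper does the excision on $\A^1 \times^{\G_m} X \to [\G_m \backslash X]$ and only at the end applies $R\Gamma_{c,\G_m}$, together with $R\Gamma_c \cong R\Gamma_{c,\G_m} \circ \Av_!^{\G_m}$. By base change the two versions are the same argument.
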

    \begin{proof}
        We will prove the lemma using stacky language.
        Let $\pi : X \to [\G_m \backslash X]$ denote the quotient map. Note that $\pi$ makes $X$ into a $\G_m$-torsor over $[\G_m \backslash X]$.
        Consider the line bundle $Y = \A^1 \times^{\G_m} X$ on $[\G_m \backslash X]$ corresponding to $X$.  
        Let $q : Y \to [\G_m \backslash X]$ denote the corresponding projection map, and write $i : [\G_m \backslash X] \hookrightarrow Y$ for the 0-section of the line bundle.
        The complement of $[\G_m \backslash X]$ in $Y$ is canonically isomorphic to $X$, and we can write $j : X \hookrightarrow Y$ for its inclusion. 
        Note that $q \circ j = \pi$. Let $\scrF \in D_{\G_m}^b (X, \k)$, and consider the excision triangle,
        \[j_! \pi^* \scrF \to q^* \scrF \to i_* \scrF \to.\]
        By the projection formula, $q_! q^* \scrF \cong \scrF [-2]$, so after applying $q_!$ to the above distinguished triangle we get
        \[\pi_! \pi^* \scrF \to \scrF [-2] \to \scrF \to.\]
        Note that $\pi^*$ corresponds to the forgetful functor $\For^{\G_m}$ and $\pi_!$ corresponds to the shifted $\G_m$-averaging functor $\Av_!^{\G_m} [-2]$.
        Finally, we apply $R\Gamma_{c, \G_m}$ to deduce the desired distinguished triangle.
    \end{proof}

    \subsection{Image of Standard and Costandards}

    Consider the composition
    \[\chi_{\L U^+} : \L U^+ \stackrel{\L\chi}{\longrightarrow} \L \G_a \to \G_a,\]
    where the second map is given by taking residues, $\sum_{i \in \Z} a_i z^i \mapsto a_{-1}$.
    For $\mu \in \bfXv^+$, we will consider the semi-infinite orbit 
    \[ S_{\mu} \coloneq \L U^+ \cdot L_{\mu}.\]
    We can define a $\G_m$-equivariant morphism $\chi_{\mu}' : S_{\mu} \to \G_a$ such that $\chi_{\mu}' (u \cdot L_{\mu}) = \chi_{\L U^+} (u)$ for all $u \in \L U^+$.

    Consider the subvariety $z^{- \zeta} \Gr_{\zeta + \mu}^+ \subseteq \Gr$. The morphism $\chi_{\zeta + \mu} : \Gr_{\zeta + \mu}^+ \to \G_a$ can be translated to a morphism $\chi_{\mu}^{\zeta} : z^{- \zeta} \Gr_{\zeta + \mu}^+ \to \G_a$. Explicitly, $\chi_{\mu}^{\zeta} (z^{-\zeta} \cdot x) = \chi_{\zeta + \mu} (x)$ for $x \in \Gr_{\zeta + \mu}^+$.
    As in the proof of \cite[Lemma 3.10]{BGMRR}, the variety $z^{- \zeta} \Gr_{\zeta + \mu}^+$ is contained in $S_{\mu}$. Moreover, $\chi_{\mu}^{\zeta}$ is the restriction of $\chi_{\mu}'$ to $z^{- \zeta} \Gr_{\zeta + \mu}^+$.

    For $\lambda, \mu \in \bfXv^{+}$, we define a subvariety $Y_{\lambda, \mu} \coloneq \Gr^{\lambda} \cap (z^{-\zeta} \Gr_{\zeta + \mu}^+) \subseteq z^{-\zeta} \Gr_{\zeta + \mu}^+$.
    Note that $Y_{\lambda, \mu}$ is a subvariety of the Mirković--Vilonen cycle $\Gr^{\lambda} \cap S_{\mu}$. We write $h_{\lambda, \mu} : Y_{\lambda,\mu} \hookrightarrow z^{-\zeta} \Gr_{\zeta + \mu}^+$ for the inclusion map.

    \begin{lemma}\label{lem:hom_and_cohomology_std_costd}
        Assume that $\k$ is a field. Let $\lambda, \mu \in \bfXv^+$ with $\lambda \neq \mu$ and $n \geq 0$.
        There is an isomorphism
        \[\Hom_{D_{\IW, *}^b (\Gr, \k)} (\Phi_* (\scrJ_! (\lambda, \k)), \nabla_{\zeta + \mu}^{\IW} [-n]) \cong H^{\langle \lambda + \mu, 2\rho \rangle + n + 1} \left( R\Gamma_{c, \G_m} (((\chi_{\mu}^{\zeta})^* u_* \k)\vert_{Y_{\lambda, \mu}} )\right)^*.\]
        In particular, the $t$-exactness for $\Phi_*$ ensures that
        \[\tau^{> \langle \lambda + \mu, 2\rho \rangle + 1} R\Gamma_{c, \G_m} (((\chi_{\mu}^{\zeta})^* u_* \k)\vert_{Y_{\lambda, \mu}}) = 0.\]
    \end{lemma}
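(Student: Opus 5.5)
Using the explicit description of $\Phi_*$ from Lemma \ref{lem:desc_of_Phi}, I will unwind the Hom by adjunctions until it becomes a compactly supported $\G_m$-cohomology computation on $Y_{\lambda,\mu}$, and then dualize.

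\emph{Step 1: strip off $\Av_{\Kir}^*$ and the averaging.} Since $\nabla_{\zeta+\mu}^{\IW}$ lies in $\Kir_*$ and $\Av_{\Kir}^*$ is left adjoint to $\iota_*$, we have
\[\Hom(\Phi_* \scrJ_!(\lambda,\k), \nabla_{\zeta+\mu}^{\IW}[-n]) \cong \Hom_{D^b_{J\rtimes\G_m}}\bigl(\phi_{\zeta*}\For\,\scrJ_!(\lambda,\k)[-\langle\zeta,2\rho\rangle+1], \For\,\nabla_{\zeta+\mu}^{\IW}[-n]\bigr).\]
Here I use that $\Av_{J\rtimes\G_m,!}^{K_\chi\rtimes\G_m}$ is left adjoint to $\For$. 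Applying $\phi_\zeta^{-1}$ moves everything to the point $L_0$ side: the $\Hom$ is computed on $\Gr$ with $z^{-\zeta}Jz^{\zeta}\rtimes\G_m$-equivariance. The target becomes $\phi_\zeta^{-1}$ of $(j^+_{\zeta+\mu})_*\chi_{\zeta+\mu}^*u_!\k[\langle\zeta+\mu,2\rho\rangle]$, which is the $*$-extension of $(\chi^\zeta_\mu)^*u_!\k$ from $z^{-\zeta}\Gr^+_{\zeta+\mu}$.

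\emph{Step 2: restrict to the stratum.} Adjunction for the $*$-extension of the inclusion of $z^{-\zeta}\Gr^+_{\zeta+\mu}$ replaces the source by its $*$-restriction there. Next, $\scrJ_!(\lambda,\k)$ is supported on $\overline{\Gr^\lambda}$. I need the $\Hom$ to see only the open stratum $\Gr^\lambda$, i.e.\ only $Y_{\lambda,\mu}$, where $\scrJ_!$ restricts to the constant sheaf $\uk[\langle\lambda,2\rho\rangle]$. \textbf{This is the step I expect to be the main obstacle.}
\begin{itemize}
\item My first attempt is to use the standard-object description $\scrJ_! = {}^pH^0(j_{\lambda!}\uk[\ldots])$ together with the fact that $\k$ is a field, so the difference between $j_{\lambda!}\uk$ and $\scrJ_!$ lies in perverse degrees $<0$.
\item I then need a vanishing of $\Hom$ from lower-degree boundary terms into the costandard. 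I would aim to extract this from the $t$-exactness of $\Phi_*$ (Lemma \ref{lem:Phi_t_exact}) combined with (\ref{eq:hom_std_costd}).
\item Failing that, I would trade $\Hom$ out of ${}^pH^0(j_{\lambda!})$ for $\Hom$ out of $j_{\lambda!}\uk$ in the relevant range $n\ge 0$. The terms ${}^pH^{<0}$ contribute only to negative shifts, which is plausible given the ranges involved.
\end{itemize}
After this step the $\Hom$ becomes
\[\Hom_{D^b_{\G_m}(Y_{\lambda,\mu})}\bigl(\uk, ((\chi^\zeta_\mu)^*u_!\k)|^!_{Y_{\lambda,\mu}}[\text{shift}]\bigr).\]
Here the residual unipotent equivariance has been forgotten fully faithfully, since the group involved is pro-unipotent; only the $\G_m$ survives.

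\emph{Step 3: rewrite as $\G_m$-cohomology and dualize.} The last $\Hom$ is $H^{\ast}R\Gamma_{\G_m}$ of the $!$-restriction of $(\chi^\zeta_\mu)^*u_!\k$, with $\chi^\zeta_\mu$ smooth. Verdier duality converts $(\chi^\zeta_\mu)^*u_!$ into $(\chi^\zeta_\mu)^!u_*$ and the $!$-restriction into a $*$-restriction. The relation $\DD\circ R\Gamma_{\G_m}\cong R\Gamma_{c,\G_m}\circ\DD$ then turns this into the $\k$-linear dual of a cohomology group of $R\Gamma_{c,\G_m}\bigl(((\chi^\zeta_\mu)^*u_*\k)|_{Y_{\lambda,\mu}}\bigr)$, using that $\k$ is a field.

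\emph{Step 4: bookkeeping of degrees.} I will collect the shifts:
\begin{itemize}
\item $\langle\lambda,2\rho\rangle$ from $\scrJ_!$;
\item $\langle\zeta+\mu,2\rho\rangle$ from the costandard;
\item $-\langle\zeta,2\rho\rangle+1$ from Lemma \ref{lem:desc_of_Phi};
\item $[1]$ versus $[0]$ for the kernel $u_!\k$;
\item twice the relative dimension of $\chi^\zeta_\mu$ from $(\chi^\zeta_\mu)^!=(\chi^\zeta_\mu)^*[2d]$.
\end{itemize}
The $\zeta$ terms cancel, and I expect the total to give degree $\langle\lambda+\mu,2\rho\rangle+n+1$.

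\emph{The ``in particular'' clause.} By Lemma \ref{lem:Phi_t_exact}, $\Phi_*(\scrJ_!(\lambda,\k))$ is perverse, and $\nabla^{\IW}_{\zeta+\mu}[-n]$ lies in degree $n\ge 0$. Hom from a perverse object into ${}^pD^{\ge 1}$ vanishes for $n\ge 1$, which gives the vanishing of the truncation above degree $\langle\lambda+\mu,2\rho\rangle+1$.
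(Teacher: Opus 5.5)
Your proposal is correct and follows the paper's route. You unwind $\Phi_*$ via Lemma \ref{lem:desc_of_Phi} and adjunction, and reduce to $\G_m$-equivariant cohomology of the $!$-restriction of $(\chi_{\mu}^{\zeta})^* u_! \k$ to $Y_{\lambda,\mu}$; the paper first $!$-restricts to $\Gr^{\lambda}$ and then uses base change, which is equivalent to your order. You then dualize using the smoothness of $\chi_{\mu}^{\zeta}$, exactly as the paper does.

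The step you flag as the obstacle is closed exactly as in your third bullet. Take the triangle ${}^p\tau^{\leq -1} j_{\lambda !}\uk[\langle\lambda,2\rho\rangle] \to j_{\lambda !}\uk[\langle\lambda,2\rho\rangle] \to \scrJ_!(\lambda,\k) \to$ and apply $\Phi_*$. By $t$-exactness of $\Phi_*$, the boundary terms land in ${}^pD^{\leq -1}$, while $\nabla_{\zeta+\mu}^{\IW}[-n]$ lies in ${}^pD^{\geq n} \subseteq {}^pD^{\geq 0}$. So the flanking $\Hom$-groups vanish by $t$-orthogonality alone. Neither (\ref{eq:hom_std_costd}) nor the field hypothesis is needed at this step; the field hypothesis is only used in the final dualization.
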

    \begin{proof}
        Note that $j_{\lambda !}$ is right $t$-exact, so $j_{\lambda !} \uk [\langle \lambda, 2\rho \rangle] \in {}^p D_{\L^+ G}^b (\Gr, \k)^{\leq 0}$.
        We can then consider the truncation distinguished triangle
        \[{}^p \tau^{\leq -1} j_{\lambda !} \uk [\langle \lambda, 2\rho \rangle] \to j_{\lambda !} \uk [\langle \lambda, 2\rho \rangle] \to \scrJ_! (\lambda, \k) \to.\]
        We can apply $R\Hom (\Phi_* (-), \nabla_{\zeta + \mu}^{\IW})$ and take cohomology to deduce that there is an isomorphism
        \[\Hom_{D_{\IW, *}^b (\Gr, \k)} (\Phi_* (\scrJ_! (\lambda, \k)), \nabla_{\zeta + \mu}^{\IW} [-n]) \cong \Hom_{D_{\IW, *}^b (\Gr, \k)} (\Phi_* (j_{\lambda !} \uk [\langle \lambda, 2\rho \rangle]), \nabla_{\zeta + \mu}^{\IW} [-n]). \]
         By Lemma \ref{lem:desc_of_Phi}, we have isomorphisms
         \begin{align*}
         \Hom_{D_{\IW, *}^b (\Gr, \k)} (\Phi_* (j_{\lambda !} \uk [\langle \lambda, 2\rho \rangle]), &\nabla_{\zeta + \mu}^{\IW} [-n]) \cong \Hom_{D_{\G_m}^b (\Gr, \k)} ( j_{\lambda !} \uk [\langle \lambda - \zeta, 2\rho \rangle + 1],  \phi_{\zeta *}^{-1} \nabla_{\zeta + \mu}^{\IW} [-n])          \\
         &\cong \Hom_{D_{\G_m}^b (\Gr^{\lambda}, \k)} (\uk_{\Gr^{\lambda}},  j_{\lambda}^! \phi_{\zeta *}^{-1} \nabla_{\zeta + \mu}^{\IW} [\langle \zeta - \lambda, 2\rho \rangle - n - 1]) \\
         &\cong H_{\G_m}^{\langle \zeta - \lambda, 2\rho \rangle - n - 1} (\Gr^{\lambda}, j_{\lambda}^! \phi_{\zeta *}^{-1} \nabla_{\zeta + \mu}^{\IW}).
         \end{align*}
         We can identify $\phi_{\zeta *}^{-1} \nabla_{\zeta + \mu}^{\IW}$ with the $*$-pushforward of $(\chi_{\mu}^{\zeta})^* u_! \k [\langle \zeta + \mu, 2\rho \rangle]$ along the obvious embedding $z^{-\zeta} \Gr_{\zeta + \mu}^+ \hookrightarrow \Gr$.
         As a result, by base change there is an isomorphism
         \[H_{\G_m}^{\langle \zeta - \lambda, 2\rho \rangle - n - 1} (\Gr^{\lambda}, j_{\lambda}^! \phi_{\zeta *}^{-1} \nabla_{\zeta + \mu}^{\IW}) \cong H_{\G_m}^{\langle \mu + 2\zeta - \lambda, 2\rho \rangle - n - 1} (Y_{\lambda, \mu}, h_{\lambda, \mu}^! (\chi_{\mu}^{\zeta})^* u_! \k). \]

         Note that $\chi_{\mu}^{\zeta}$ is smooth of relative dimension $\langle \zeta +\mu , 2\rho \rangle - 1$, so we have an isomorphism
         \begin{align*}
            \DD R\Gamma_{\G_m} (h_{\lambda, \mu}^! (\chi_{\mu}^{\zeta})^* u_! \k) &\cong R\Gamma_{c, \G_m} (\DD h_{\lambda, \mu}^! (\chi_{\mu}^{\zeta})^* u_! \k) \\
            &\cong R\Gamma_{c, \G_m} (h_{\lambda, \mu}^* (\chi_{\mu}^{\zeta})^! u_* \k [2]) \\
            &\cong R\Gamma_{c, \G_m} (h_{\lambda, \mu}^* (\chi_{\mu}^{\zeta})^* u_* \k [2\langle \zeta + \mu,  2\rho \rangle])
         \end{align*}
         We conclude by taking cohomology that 
         \begin{align*}
            H_{\G_m}^{\langle \mu + 2\zeta - \lambda, 2\rho \rangle - n - 1} (Y_{\lambda, \mu}, h_{\lambda, \mu}^! (\chi_{\mu}^{\zeta})^* u_! \k) &\cong H^{\langle \lambda + \mu, 2\rho \rangle + n + 1} \left( R\Gamma_{c, \G_m} ( h_{\lambda, \mu}^* (\chi_{\mu}^{\zeta})^* u_* \k) \right)^*.
         \end{align*}
    \end{proof}

    We define 
    \[Y_{\lambda, \mu}^0 = Y_{\lambda, \mu} \cap (\chi_{\mu}^{\zeta})^{-1} (0).\]
    This is a closed subvariety of $Y_{\lambda, \mu}$. We will write $s' : Y_{\lambda, \mu}^0 \hookrightarrow Y_{\lambda, \mu}$ for the inclusion map.

    \begin{lemma}\label{lem:dim_Y_lambda_nu_0}
        Let $\lambda, \mu \in \bfXv^+$. If $\lambda \neq \mu$, then $\dim Y_{\lambda, \nu}^0 < \langle \lambda + \mu, \rho \rangle$.
    \end{lemma}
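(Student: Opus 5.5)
We need to bound the dimension of $Y_{\lambda,\mu}^0 = \Gr^\lambda \cap z^{-\zeta}\Gr^+_{\zeta+\mu} \cap (\chi_\mu^\zeta)^{-1}(0)$. (The $\nu$ in the statement should read $\mu$.) The approach follows \cite[Lemma 3.10]{BGMRR} and the analogous step in \cite{ABBGM}: compare $Y_{\lambda,\mu}^0$ with the Mirković--Vilonen intersection $\Gr^\lambda\cap S_\mu$. By Mirković--Vilonen \cite{MV}, this intersection is either empty or has pure dimension $\langle \lambda+\mu,\rho\rangle$.

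\emph{Step 1: a containment.} As recalled before Lemma \ref{lem:hom_and_cohomology_std_costd}, we have $Y_{\lambda,\mu}\subseteq \Gr^\lambda\cap S_\mu$, and $\chi^\zeta_\mu$ is the restriction of $\chi'_\mu$. Hence
\[ Y^0_{\lambda,\mu}\subseteq (\Gr^\lambda\cap S_\mu)\cap (\chi'_\mu)^{-1}(0). \]
It therefore suffices to show that the right-hand side has dimension strictly less than $\langle\lambda+\mu,\rho\rangle$ when $\lambda\neq\mu$. If $\Gr^\lambda\cap S_\mu=\emptyset$ there is nothing to prove, so assume it is nonempty.

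\emph{Step 2: $\chi'_\mu$ is non-constant on every irreducible component.} This is the heart of the argument. Each component $Z$ of $\Gr^\lambda\cap S_\mu$ has dimension exactly $\langle\lambda+\mu,\rho\rangle$. If $\chi'_\mu$ is not identically zero on $Z$, then $Z\cap(\chi'_\mu)^{-1}(0)$ is a proper closed subset cut out by one function. Its dimension is then at most $\dim Z-1$, which gives the strict inequality.

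To show $\chi'_\mu|_Z\not\equiv 0$, I would use the standard fact from \cite{BGMRR,ABBGM}: for $\lambda\neq\mu$, no irreducible component of $\Gr^\lambda\cap S_\mu$ is stable under the subgroup $\L U^+\cap(\text{preimage of some root group})$ on which $\chi_{\L U^+}$ is nontrivial. The idea is as follows. For each simple root $\alpha$, the one-parameter subgroup $x_\alpha(a z^{-1})\subset\L U^+$ has $\chi_{\L U^+}(x_\alpha(az^{-1}))=a$. Suppose $\chi'_\mu$ vanished on $Z$. Then $\chi'_\mu(g\cdot y)=\chi_{\L U^+}(g)+\chi'_\mu(y)$ for $g\in\L U^+$, because $\chi'_\mu$ is $\L U^+$-equivariant with respect to the character. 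It follows that $Z$ and its translates $x_\alpha(az^{-1})Z$ are disjoint for $a\neq 0$. Both translates lie in $S_\mu$, but they need not lie in $\Gr^\lambda$, so this alone is not yet a contradiction.

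\emph{The main obstacle.} I would instead argue via the $\G_m$-equivariance (through $\zeta$ and loop rotation) and the $T$-fixed point structure, exactly as in \cite[Lemma 3.10]{BGMRR}. There one shows that if $\chi'_\mu$ vanishes on a component $Z$, then $Z$ is stable under the group $\L^+ U^+_{\chi}$-type subgroup together with the root subgroups $x_\alpha(\C z^{-1})$. This forces $Z$ to be stable under a subgroup whose orbit through $Z$'s generic point contradicts $\lambda\neq\mu$, using the characterization of MV cycles via the $\L^+G$-action on $\overline{\Gr^\lambda}$. Concretely, I would try to show that vanishing of $\chi'_\mu$ on $Z$ forces $Z$ to be $z^{\mu}$-translate-invariant under $U_{-\alpha}$-type elements moving it into $S_{\mu+\alpha^\vee}$, which contradicts maximality of dimension unless $\mu=\lambda$ (the case where $\Gr^\lambda\cap S_\lambda$ is a point on which $\chi'$ is trivially $0$).

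If this direct route proves awkward, the fallback is to cite \cite[Lemma 3.10]{BGMRR} or \cite[\S 3]{ABBGM}. Their statement is exactly the Artin--Schreier-setting dimension bound, and it is purely geometric, so independent of the sheaf theory; it then transfers verbatim to our setting via the containment of Step 1.
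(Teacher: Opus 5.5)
Your Step 1 is correct and is the reduction the paper also makes. Since $\Gr^\lambda\cap S_\mu$ is pure of dimension $\langle\lambda+\mu,\rho\rangle$, contains $Y_{\lambda,\mu}$, and $\chi^\zeta_\mu$ is the restriction of $\chi'_\mu$, everything reduces to showing that $\chi'_\mu$ is not identically zero on any irreducible component of $\Gr^\lambda\cap S_\mu$ when $\lambda\neq\mu$. That non-constancy is the entire content of the lemma, and your proposal does not establish it.

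The translate argument fails for the reason you give: the translates $x_\alpha(az^{-1})Z$ leave $\Gr^\lambda$. The replacement sketch (stability of $Z$ under some subgroup, or elements ``moving $Z$ into $S_{\mu+\alpha^\vee}$'') gives no mechanism by which $\chi'_\mu|_Z\equiv 0$ would force such stability, nor why that would contradict $\lambda\neq\mu$.

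The fallback citation does not close the gap either. \cite{BGMRR} works with étale sheaves over an algebraically closed field of positive characteristic, and the corresponding step there is cohomological: it concerns Artin--Schreier-twisted cohomology, controlled by the geometric Casselman--Shalika formula. It is not a sheaf-theory-free dimension bound over $\C$. The present paper borrows from the proof of \cite[Lemma 3.10]{BGMRR} only the containment $z^{-\zeta}\Gr^+_{\zeta+\mu}\subseteq S_\mu$. To use that reference you would have to extract the geometric statement from the cohomological one, and then transport it from $\overline{\F}_p$ to $\C$ by spreading out. Neither step is ``verbatim'', and you do not identify a precise statement in \cite{ABBGM} either.

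The missing idea is to prove the non-constancy directly over $\C$ from the geometric Casselman--Shalika formula of \cite{NP,FGV}, using the exponential $D$-module; this is what the paper does.

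\emph{Setup.} Suppose $\chi'_\mu$ vanished on a component $Z'$. Then $(\chi'_\mu)^*\exp$ restricts to the constant sheaf on $Z'$. Its compactly supported cohomology in the top degree $\langle\lambda+\mu,2\rho\rangle$ therefore contains the fundamental class of $Z'$. Hence $H_c^{\langle\lambda+\mu,2\rho\rangle}(\Gr^\lambda\cap S_\mu,(\chi'_\mu)^*\exp)\neq 0$.

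\emph{Contradiction.} The formula implies that $H_c^\bullet(S_\mu,\scrF|_{S_\mu}\otimes(\chi'_\mu)^*\exp)$ is concentrated in degree $\langle\mu,2\rho\rangle$ for every $\scrF\in P_{\L^+G}(\Gr,\C)$. So the canonical map $j_{\lambda!}\underline{\C}_{\Gr^\lambda}[\langle\lambda,2\rho\rangle]\to\scrJ_!(\lambda,\C)$ identifies the nonzero group above with $H_c^{\langle\mu,2\rho\rangle}(S_\mu,\scrJ_!(\lambda,\C)|_{S_\mu}\otimes(\chi'_\mu)^*\exp)$. Since $\scrJ_!(\lambda,\C)\cong\scrJ_{!*}(\lambda,\C)$ in characteristic $0$, this group vanishes for $\lambda\neq\mu$, which is a contradiction.

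Without this input, or some other genuinely new input about MV cycles, your Step 2 remains unproved.
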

    \begin{proof}
        Our proof will make use of holonomic $D$-modules. 
        To simplify notation, we will use the same notation as constructible sheaves with $\C$-coefficients.
        The key to our argument will be the geometric Casselman--Shalika formula established independently in \cite{NP} and \cite{FGV}.
        It states that
        \begin{equation}\label{eq:gcsf}
            H_c^i (S_{\mu}, \scrJ_{!*} (\lambda, \C)\vert_{S_{\mu}} \otimes^L (\chi_{\mu}')^* (\exp)) = \begin{cases}\C & \text{if }\lambda = \mu \text{ and } i = \langle 2\rho, \lambda \rangle, \\ 0 & \text{otherwise,}\end{cases}
        \end{equation}
        where $\exp$ denotes the exponential $D$-module on $\A^1$.

        By \cite[Theorem 3.2]{MV}, we have that $\dim (\Gr^{\lambda} \cap S_{\mu}) = \langle \lambda + \mu, \rho \rangle$. It follows that $\dim (Y_{\lambda, \mu}) \leq \langle \lambda + \mu, \rho \rangle$.
        If this inequality is strict, then our claim is obvious. 
        Otherwise, the lemma is equivalent to the claim that $\chi_{\mu}' \circ h_{\lambda, \mu} : Y_{\lambda, \mu} \to \G_a$ is nonzero on every irreducible component $Z$ of dimension $\langle \lambda + \mu ,\rho \rangle$ in $Y_{\lambda, \mu}$.
        Now $Z$ is open and dense in an irreducible component $Z'$ of $\Gr^{\lambda} \cap S_{\mu}$, so it suffices to show that $\chi_{\mu}'$ is nonzero on $Z'$.
        Assume that $\chi_{\mu}' (Z') = 0$. We then must have that $((\chi_{\mu}')^* \exp)\vert_{Z'} \cong \underline{\C}_{Z'}$, so there is an isomorphism
        \begin{equation}\label{eq:dim_Y_lambda_nu_0_1}
            H_c^{\langle \lambda + \mu, 2\rho \rangle} (Z', ((\chi_{\mu}')^* \exp)\vert_{Z'}) \cong H_c^{\langle \lambda + \mu, 2\rho \rangle} (Z', \k).
        \end{equation}
        Note that the left-hand side of (\ref{eq:dim_Y_lambda_nu_0_1}) is nonzero since the right-hand side is generated by the dual fundamental class of $Z'$.
        In particular, we must have that 
        \[H_c^{\langle \lambda + \mu, 2\rho \rangle} (\Gr^{\lambda} \cap S_{\mu}, ((\chi_{\mu}')^* \exp) \vert_{\Gr^{\lambda} \cap S_{\mu}}) \neq 0.\]
        The geometric Casselman--Shalika formula (\ref{eq:gcsf}) implies that $H_c^i (S_{\mu}, \scrF\vert_{S_{\mu}} \otimes^L (\chi_{\mu}')^* \exp) = 0$ for $i \neq \langle 2\rho, \mu \rangle$ and $\scrF \in P_{\L^+ G} (\Gr, \C)$.
        Therefore, the canonical morphism $j_{\lambda !} \underline{\C}_{\Gr^{\lambda}} [\langle \lambda, 2\rho \rangle]\to \scrJ_! (\lambda, \C)$ induces an isomorphism
        \[H_c^{\langle \lambda + \mu, 2\rho \rangle} (S_{\mu}, (j_{\lambda !} \underline{\C}_{\Gr^{\lambda}})\vert_{S_{\mu}} \otimes^L (\chi_{\mu}')^* \exp) \stackrel{\sim}{\to} H_c^{\langle \mu, 2\rho \rangle} (S_{\mu}, \scrJ_! (\lambda, \C)\vert_{S_{\mu}} \otimes^L (\chi_{\mu}')^* \exp). \]
        By base change, the left-hand side identifies with $H_c^{\langle \mu, 2\rho \rangle} (\Gr^{\lambda} \cap S_{\mu}, ((\chi_{\mu}')^* \exp)\vert_{\Gr^{\lambda} \cap S_{\mu}})$.
        However, since $\scrJ_! (\lambda, \C) \cong \scrJ_{!*} (\lambda, \C)$ (see for example \cite[Proposition 1]{Ga01}), the right-hand side vanishes by the geometric Casselman--Shalika formula (\ref{eq:gcsf}).
        This contradicts our assumption that $\chi_{\mu} (Z') = 0$ which completes the proof.
    \end{proof}

    \begin{lemma}\label{lem:cohom_of_Y_mu_lambda}
        Let $\lambda, \mu \in \bfXv^+$ such that $\lambda \neq \mu$, then
        \[H_c^{\langle \lambda + \mu, 2\rho \rangle + n} (Y_{\lambda, \mu}, ((\chi_{\mu}^{\zeta})^* u_* \k)\vert_{Y_{\lambda, \mu}}) = 0\]
        for all $n > 0$.    
    \end{lemma}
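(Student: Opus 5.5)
Write $\scrG := ((\chi_{\mu}^{\zeta})^* u_* \k)\vert_{Y_{\lambda, \mu}}$ and set $d := \langle \lambda + \mu, \rho \rangle$. The statement is about ordinary (non-equivariant) compactly supported cohomology, so I will argue directly by excision along the zero fibre of $\chi_{\mu}^{\zeta}$, using the dimension bound of Lemma \ref{lem:dim_Y_lambda_nu_0}.

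\textbf{Step 1: excision.} Let $U := Y_{\lambda,\mu} \smallsetminus Y_{\lambda,\mu}^0$, with open inclusion $j' : U \hookrightarrow Y_{\lambda,\mu}$. The closed inclusion $s' : Y_{\lambda,\mu}^0 \hookrightarrow Y_{\lambda,\mu}$ gives the triangle
\[j'_! j'^* \scrG \to \scrG \to s'_* s'^* \scrG \to,\]
and hence a long exact sequence in $H_c^\bullet$. It therefore suffices to show that both $H_c^{2d+n}(U, \scrG\vert_U)$ and $H_c^{2d+n}(Y^0_{\lambda,\mu}, s'^*\scrG)$ vanish for $n>0$.

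\textbf{Step 2: the open part.} Over $\G_m \subset \A^1$ we have $u_*\k\vert_{\G_m} = \k$, so $\scrG\vert_U \cong \uk_U$. Since $\dim U \le \dim Y_{\lambda,\mu} \le d$ (by \cite[Theorem 3.2]{MV}, as recalled in the proof of Lemma \ref{lem:dim_Y_lambda_nu_0}), we get $H_c^{i}(U,\k) = 0$ for $i > 2d$.

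\textbf{Step 3: the zero fibre.} Here $s'^*\scrG \cong (\chi')^* s^* u_* \k$, where $\chi'$ is the constant map $Y^0_{\lambda,\mu} \to \pt$. The stalk $s^* u_* \k$ is $R\Gamma$ of a punctured disc, i.e. the cohomology of $S^1$, so it is concentrated in degrees $0$ and $1$ (free of rank one in each). Thus $s'^*\scrG$ is a constant complex with cohomology sheaves in degrees $0,1$. Lemma \ref{lem:dim_Y_lambda_nu_0} gives $\dim Y^0_{\lambda,\mu} \le d-1$, so $H_c^i(Y^0_{\lambda,\mu}, s'^*\scrG) = 0$ for $i > 2(d-1)+1 = 2d-1$. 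In particular it vanishes for $i \ge 2d$, which is more than enough.

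\textbf{Step 4: conclusion.} The long exact sequence of Step 1 then gives $H_c^{2d+n}(Y_{\lambda,\mu},\scrG)=0$ for all $n>0$. Note that this argument also controls degree $2d$ on the zero fibre, which is presumably what the subsequent argument needs.

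The main point to check is Step 3, specifically the identification of $s^* u_*\k$. Since $\scrG$ is the non-equivariant pullback, the stalk at $0$ of $u_*\k$ is indeed $\k \oplus \k[-1]$, and the degree bound for compactly supported cohomology of a constructible complex on a variety of dimension $\le d-1$ with cohomology in degrees $[0,1]$ follows from the standard spectral sequence. The hard geometric input is Lemma \ref{lem:dim_Y_lambda_nu_0}, which we assume.
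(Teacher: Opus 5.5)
Your proof is correct and is essentially the paper's argument: excision along the zero fibre $Y^0_{\lambda,\mu}$, combined with the bounds $\dim Y_{\lambda,\mu}\le\langle\lambda+\mu,\rho\rangle$ (from \cite[Theorem 3.2]{MV}) and $\dim Y^0_{\lambda,\mu}<\langle\lambda+\mu,\rho\rangle$ (Lemma \ref{lem:dim_Y_lambda_nu_0}). The only cosmetic difference is the excision triangle used. The paper pulls back $s_*\k[-2]\to\uk_{\A^1}\to u_*\k\to$ to get $s'_*\uk_{Y^0_{\lambda,\mu}}[-2]\to\uk_{Y_{\lambda,\mu}}\to\scrG\to$, which avoids computing the stalk $s^*u_*\k$. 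You instead apply $j'_!j'^*\to\mathrm{id}\to s'_*s'^*$ to $\scrG$, and your computation of that stalk as $H^*(S^1)$ is correct.
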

    \begin{proof}
        Consider the excision triangle
        \[s_*' \uk_{Y_{\lambda, \mu}^0} [-2] \to \uk_{Y_{\lambda, \mu}} \to ((\chi_{\mu}^{\zeta})^* u_* \k)\vert_{Y_{\lambda, \mu}} \to.\]
        We then get a long exact sequence
        \[\ldots \to H_c^{\langle \lambda + \mu, 2\rho \rangle + n} (Y_{\lambda, \mu}, \k) \to H_c^{\langle \lambda + \mu, 2\rho \rangle + n} (Y_{\lambda, \mu},  ((\chi_{\mu}^{\zeta})^* u_* \k)\vert_{Y_{\lambda, \mu}}) \to H_c^{\langle \lambda + \mu, 2\rho \rangle + n - 1} (Y_{\lambda, \mu}^0, \k) \to \ldots.\]
        Recall that $Y_{\lambda, \mu}$ is a subvariety of the Mirković--Vilonen cycle $\Gr^{\lambda} \cap S_{\mu}$.
        By Lemma \ref{lem:dim_Y_lambda_nu_0}, we have that $\dim Y_{\lambda, \nu}^0 < \langle \lambda + \mu, \rho \rangle$.
        Moreover, in the proof of Lemma \ref{lem:dim_Y_lambda_nu_0}, we saw that $\dim Y_{\lambda, \nu} \leq \langle \lambda + \mu, \rho \rangle$.
        Therefore, we conclude that both $H_c^{\langle \lambda + \mu, 2\rho \rangle + n} (Y_{\lambda, \mu}, \k) = 0$ and $H_c^{\langle \lambda + \mu, 2\rho \rangle + n - 1} (Y_{\lambda, \mu}^0, \k) = 0$.
        The desired vanishing then follows from exactness.
    \end{proof}

    \begin{lemma}\label{lem:hom_vanishing_between_Phi_and_costds}
        Assume that $\k$ is a field. For any $\lambda, \mu \in \bfXv^+$ with $\lambda \neq \mu$ we have
        \[\Hom_{P_{\IW, *} (\Gr, \k)} (\Phi_* (\scrJ_! (\lambda, \k)), \nabla_{\zeta + \mu}^{\IW} (\k)) = 0.\]
    \end{lemma}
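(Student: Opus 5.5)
The plan is to reduce the Hom-vanishing to a cohomological vanishing using Lemma \ref{lem:hom_and_cohomology_std_costd}, and then establish that vanishing by combining Lemma \ref{lem:G_m_equiv_cohomology_comparison} with Lemma \ref{lem:cohom_of_Y_mu_lambda}. Set $N \coloneq \langle \lambda + \mu, 2\rho \rangle$ and
\[\scrF \coloneq ((\chi_{\mu}^{\zeta})^* u_* \k)\vert_{Y_{\lambda, \mu}} \in D_{\G_m}^b (Y_{\lambda,\mu}, \k).\]
Applying Lemma \ref{lem:hom_and_cohomology_std_costd} with $n = 0$, and using that $\Phi_*$ is $t$-exact (Lemma \ref{lem:Phi_t_exact}) so that the Hom may be taken in $P_{\IW,*}(\Gr,\k)$, the Hom-space in question is the $\k$-dual of $H^{N+1}(R\Gamma_{c,\G_m}(\scrF))$. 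It therefore suffices to show that
\[H^{N+1}(R\Gamma_{c,\G_m}(\scrF)) = 0.\]

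To do this, apply $R\Gamma_{c,\G_m}$ to the distinguished triangle of Lemma \ref{lem:G_m_equiv_cohomology_comparison},
\[R\Gamma_c(\For^{\G_m}\scrF)[-2] \to R\Gamma_{c,\G_m}(\scrF)[-2] \to R\Gamma_{c,\G_m}(\scrF) \to,\]
and take cohomology in degree $N+3$. This gives an exact segment
\[H_c^{N+1}(Y_{\lambda,\mu}, \For^{\G_m}\scrF) \to H^{N+1}(R\Gamma_{c,\G_m}\scrF) \to H^{N+3}(R\Gamma_{c,\G_m}\scrF).\]

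The two outer terms vanish for separate reasons.
\begin{enumerate}
\item The right-hand term vanishes by the second assertion of Lemma \ref{lem:hom_and_cohomology_std_costd}, namely $\tau^{>N+1}R\Gamma_{c,\G_m}(\scrF)=0$. That assertion itself comes from $t$-exactness of $\Phi_*$ and the identification with Hom into $\nabla^{\IW}_{\zeta+\mu}[-n]$ for $n\ge 1$.
\item The left-hand term is the ordinary compactly supported cohomology of the underlying non-equivariant sheaf $((\chi_{\mu}^{\zeta})^* u_* \k)\vert_{Y_{\lambda,\mu}}$ in degree $N+1$. It vanishes by Lemma \ref{lem:cohom_of_Y_mu_lambda} with $n = 1$.
\end{enumerate}
Since the middle term sits between two zeros in an exact sequence, $H^{N+1}(R\Gamma_{c,\G_m}\scrF) = 0$, and the Hom-space vanishes.

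The only delicate point I anticipate is bookkeeping. One must check the degree shifts in the long exact sequence of the triangle: the degree-$k$ term of $A[-2]$ is $H^{k-2}(A)$, so taking $k = N+3$ produces exactly the segment above. One must also check that $\For^{\G_m}$ of the restricted equivariant sheaf agrees with the sheaf appearing in Lemma \ref{lem:cohom_of_Y_mu_lambda}. Both are formal. The genuine geometric input, namely the dimension bound on $Y^0_{\lambda,\mu}$ coming from the geometric Casselman--Shalika formula, has already been absorbed into Lemma \ref{lem:cohom_of_Y_mu_lambda}.
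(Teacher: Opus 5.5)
Your proposal is correct and is essentially the paper's proof. You reduce via Lemma \ref{lem:hom_and_cohomology_std_costd} (with $n=0$) to the vanishing of $H^{\langle \lambda+\mu, 2\rho\rangle+1}$ of $R\Gamma_{c,\G_m}$, then use the long exact sequence of Lemma \ref{lem:G_m_equiv_cohomology_comparison}, killing the outer terms with Lemma \ref{lem:cohom_of_Y_mu_lambda} and the truncation assertion of Lemma \ref{lem:hom_and_cohomology_std_costd}; your degree bookkeeping (taking degree $N+3$ in the triangle) is also right.
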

    \begin{proof}
        By Lemma \ref{lem:hom_and_cohomology_std_costd}, it suffices to prove that
        \[H^{\langle \lambda + \mu, 2\rho \rangle+ 1} \left( R\Gamma_{c, \G_m} (((\chi_{\mu}^{\zeta})^* u_* \k)\vert_{Y_{\lambda, \mu}} )\right) = 0.\]
        We can compute this cohomology by taking the long exact sequence of the distinguished triangle from Lemma \ref{lem:G_m_equiv_cohomology_comparison},
        \begin{align*}
            \ldots \to H_c^{\langle \lambda + \mu, 2\rho \rangle+ 1} \left( Y_{\lambda, \mu}, (((\chi_{\mu}^{\zeta})^* u_* \k)\vert_{Y_{\lambda, \mu}} )\right) &\to H^{\langle \lambda + \mu, 2\rho \rangle+ 1} \left( R\Gamma_{c, \G_m} (((\chi_{\mu}^{\zeta})^* u_* \k)\vert_{Y_{\lambda, \mu}} )\right) \\
            &\to H^{\langle \lambda + \mu, 2\rho \rangle+ 3} \left( R\Gamma_{c, \G_m} (((\chi_{\mu}^{\zeta})^* u_* \k)\vert_{Y_{\lambda, \mu}} )\right) \to \ldots
        \end{align*}
        The first term vanishes by Lemma \ref{lem:cohom_of_Y_mu_lambda} while the last term vanishes by Lemma \ref{lem:hom_and_cohomology_std_costd}.
        The lemma then follows by exactness.
    \end{proof}

    \begin{proposition}\label{prop:phi_on_std_costds}
        For $\lambda \in \bfXv^+$, there are isomorphisms
        \[\Phi_? (\scrJ_! (\lambda, \k)) \cong \Delta_{\zeta + \lambda}^{\IW} (\k) \qquad\text{and}\qquad \Phi_? (\scrJ_* (\lambda, \k)) \cong \nabla_{\zeta + \lambda}^{\IW} (\k).\]
    \end{proposition}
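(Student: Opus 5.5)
We first reduce to the case where $\k$ is a field and to a single version of the Kirillov model, and then argue along the lines of \cite{ABBGM} and \cite{BGMRR}.

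\emph{Reductions.} Both $\Phi_?$ and the formation of (co)standard objects commute with extension of scalars, by (\ref{eq:Phi_t_exact_1}), Lemma \ref{lem:eos_and_spherical} and Lemma \ref{lem:eos_commutes_with_sh_functors}. So it suffices to produce the isomorphisms over $\Z$, and then base change them to $\k$. Over $\Z$, objects that are perverse and $\Z$-free in the relevant sense can be detected after tensoring with $\Q$ and with each $\F_p$. For this reason I would organize the argument as constructing a canonical morphism over $\Z$ and checking that it becomes an isomorphism after all base changes to fields.

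For the $?$-versions: Lemma \ref{lem:iw_action_commutes_with_D_and_T}(2), (\ref{eq:min_IC_clean}) and (\ref{eq:translation_of_std_costds}) give $T_!^*\Phi_!\cong\Phi_*$. Together with (\ref{eq:translation_of_std_costds}), this means it is enough to treat $\Phi_*$. Lemma \ref{lem:iw_action_commutes_with_D_and_T}(1), (\ref{eq:DD_of_std_costds}) and Lemma \ref{lem:eos_and_spherical}(2) give $\DD\Phi_!(\scrJ_!(\lambda))\cong \Phi_*(\scrJ_*(\lambda))$, using $\DD\Delta_\zeta^{\IW,!}\cong\nabla_\zeta^{\IW,*}\cong\Delta_\zeta^{\IW,*}$. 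So the costandard statement follows from the standard one (for $\Phi_!$, which is $T_*^!$ of the $\Phi_*$ statement). This leaves the claim $\Phi_*(\scrJ_!(\lambda,\k))\cong\Delta^{\IW}_{\zeta+\lambda}(\k)$ for $\k$ a field.

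\emph{Field case.} Set $\scrF=\Phi_*(\scrJ_!(\lambda,\k))$; it is perverse by Lemma \ref{lem:Phi_t_exact}.

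First, I would locate its support. $\scrJ_!(\lambda)$ is supported on $\overline{\Gr^\lambda}$. By Lemma \ref{lem:desc_of_Phi}, $\scrF$ is obtained by translating by $z^\zeta$ and averaging, so it is supported on $\overline{\Gr^{\zeta+\lambda}}$. Hence it lies in $\Kir_*(\Gr^+_{\preceq\zeta+\lambda})$.

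Second, I would compute the restriction to the open stratum. Using the Hom computation of Lemma \ref{lem:hom_and_cohomology_std_costd} with $\mu=\lambda$, $Y_{\lambda,\lambda}$ is open in an MV cycle of top dimension $\langle\lambda,2\rho\rangle$. I expect this to show that $\Hom(\scrF,\nabla_{\zeta+\lambda}^{\IW}[-n])$ is $\k$ for $n=0$ and vanishes otherwise, and hence that $(j^+_{\zeta+\lambda})^*\scrF\cong\scrK^*_{\zeta+\lambda}(\k)[\langle\zeta+\lambda,2\rho\rangle]$ (via $T$ and Proposition \ref{prop:reduction_to_flag_variety}). Adjunction then yields a nonzero map $\Delta^{\IW}_{\zeta+\lambda}\to\scrF$.

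Third, for every $\mu\neq\lambda$, Lemma \ref{lem:hom_vanishing_between_Phi_and_costds} gives $\Hom(\scrF,\nabla^{\IW}_{\zeta+\mu})=0$. Since $\nabla^{\IW}_{\nu}$ for $\nu\notin\zeta+\bfXv^+$ does not occur, $\scrF$ has no simple quotient other than $\IC_{\zeta+\lambda}$, with multiplicity one.

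To pass from "only one simple quotient" to "isomorphic to the standard object", I would show that $\scrF$ is projective in $P_{\IW,*}(\Gr^+_{\preceq\zeta+\lambda},\k)$. The idea is to compute $\Ext^1(\scrF,\nabla_{\zeta+\mu})=\Hom(\scrF,\nabla_{\zeta+\mu}[1])$ by adjunction. This equals $\Hom(\scrJ_!(\lambda), \Phi^R(\nabla_{\zeta+\mu})[1])$, where $\Phi^R$ is the right adjoint read off from Lemma \ref{lem:desc_of_Phi}. For $\lambda\ne\mu$ I expect the stronger vanishing $\tau^{>\langle\lambda+\mu,2\rho\rangle}R\Gamma_{c,\G_m}(\dots)=0$ obtained in the proof of Lemma \ref{lem:hom_vanishing_between_Phi_and_costds}, together with Lemma \ref{lem:hom_and_cohomology_std_costd}, to give exactly this. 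Then $\scrF$ has a standard filtration by the highest weight criterion (Lemma \ref{lem:h_wt_str}), with multiplicities $\dim\Hom(\scrF,\nabla_\nu)$. That equals one for $\nu=\zeta+\lambda$ and zero otherwise, so $\scrF\cong\Delta^{\IW}_{\zeta+\lambda}$.

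An alternative to the projectivity argument is a character count. The Grothendieck group class of $\Phi_*(\scrJ_!(\lambda))$ can be computed from Euler characteristics of the $Y_{\lambda,\mu}$. In characteristic $0$ this class matches that of $\Delta_{\zeta+\lambda}$ by Lemma \ref{lem:semisimple_char_0} and the Casselman--Shalika formula, and the class is independent of the field.

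\emph{Main obstacle.} The hard step is the $\Ext^1$-vanishing, equivalently the degree $n=1$ part of Lemma \ref{lem:hom_and_cohomology_std_costd}, which I need in order to upgrade "unique top" to "standard". The difficulty is that the Fourier--Laumon kernel rules out Artin vanishing. My first attempt would be to push the excision argument of Lemma \ref{lem:cohom_of_Y_mu_lambda} one degree further, using the strict dimension bound of Lemma \ref{lem:dim_Y_lambda_nu_0}.
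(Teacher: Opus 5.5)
Your reductions (to $\Phi_*$ and to standard objects, via $T_!^*$, $\DD$ and cleanness of $\Delta^{\IW}_\zeta$) are fine. The step you flag yourself is a genuine gap, and the method you propose for it cannot work. That step is upgrading ``$\scrF$ has only the quotient $\IC^{\IW}_{\zeta+\lambda}$'' to $\scrF\cong\Delta^{\IW}_{\zeta+\lambda}$ via $\Ext^1(\scrF,\nabla^{\IW}_{\zeta+\mu})=0$.

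Put $A=j_{\lambda!}\uk[\langle\lambda,2\rho\rangle]$ and $d=\langle\lambda+\mu,2\rho\rangle$. The truncation triangle for $A$ and $t$-exactness of $\Phi_*$ give an exact sequence
\[0\to\Hom(\scrF,\nabla^{\IW}_{\zeta+\mu}[1])\to\Hom(\Phi_*A,\nabla^{\IW}_{\zeta+\mu}[1])\to\Hom(\Phi_*({}^pH^{-1}A),\nabla^{\IW}_{\zeta+\mu}).\]
Three inputs identify the middle term with the dual of $H^{d}_c(Y_{\lambda,\mu},(\chi^\zeta_\mu)^*u_*\k)$: the isomorphism chain in the proof of Lemma~\ref{lem:hom_and_cohomology_std_costd} (which holds for $n=-1$), the triangle of Lemma~\ref{lem:G_m_equiv_cohomology_comparison}, and the vanishing of $H^{d+1}$ and $H^{d+2}$ of $R\Gamma_{c,\G_m}$.

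This is the top degree. By excision it is the cokernel of the Gysin map $H^{d-2}_c(Y^0_{\lambda,\mu},\k)\to H^d_c(Y_{\lambda,\mu},\k)$. The strict bound of Lemma~\ref{lem:dim_Y_lambda_nu_0} says nothing about this cokernel, and it is nonzero in general. For $G=PGL_2$, $\lambda=\alpha^\vee$, $\mu=0$, one finds $Y_{\lambda,\mu}=\{u_\alpha^{-1}(az^{-1})\cdot L_0 : a\neq0\}\cong\G_m$ with $\chi^\zeta_0=a$. So $Y^0_{\lambda,\mu}=\emptyset$ and the group is $H^2_c(\G_m,\k)=\k$. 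This class comes from ${}^pH^{-1}(A)\neq 0$, here the skyscraper at $L_0$. Hence $\Ext^1$-vanishing hinges on the connecting map to $\Phi_*$ of lower strata, and pushing the excision argument one degree further cannot give it.

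The paper never needs $\Ext^1$-vanishing. It takes the support and the open-stratum restriction directly from the convolution geometry of \cite{BGMRR}: $m_{\zeta,\lambda}$ lands in $\overline{\Gr^{\zeta+\lambda}}$ and is an isomorphism over $\Gr^+_{\zeta+\lambda}$. Your Step~2 instead relies on an uncomputed $Y_{\lambda,\lambda}$, outside the hypothesis $\lambda\neq\mu$. The geometry yields a canonical $f^\k_\lambda:\Delta^{\IW}_{\zeta+\lambda}(\k)\to\Phi_*(\scrJ_!(\lambda,\k))$, compatible with base change.

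Your Step~3 shows $f^{\F}_\lambda$ is surjective for $\F\in\{\Q,\F_p\}$: a nonzero cokernel lives on the boundary and would give a quotient $\IC^{\IW}_{\zeta+\mu}$ of $\scrF$ with $\mu\neq\lambda$. Over $\Q$, $\Delta^{\IW}_{\zeta+\lambda}$ is simple by Lemma~\ref{lem:semisimple_char_0}, so $f^\Q_\lambda$ is an isomorphism. Over $\Z$, the cokernel vanishes because it vanishes mod every $p$. The kernel is torsion but sits inside the torsion-free $\Delta^{\IW}_{\zeta+\lambda}(\Z)$, hence is zero. The missing idea is thus to get injectivity in characteristic $p$ from the $\Z$-form rather than from $\Ext^1$-vanishing.

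Your ``character count'' alternative is a $K_0$-shadow of this idea and can be completed along these lines:
\begin{enumerate}
\item The class of $\Phi_*(\scrJ_!(\lambda,\F))=\F\otimes^L\Phi_*(\scrJ_!(\lambda,\Z))$ is independent of $\F$.
\item Over $\Q$ it equals $[\Delta^{\IW}_{\zeta+\lambda}]$ by semisimplicity and Steps~2--3; no Casselman--Shalika formula is needed.
\item The surjection $f^\F_\lambda$ is then an isomorphism by length.
\end{enumerate}
As written, though, you compute the class from Euler characteristics of the $Y_{\lambda,\mu}$. That would give the class of $\Phi_*(A)$, not of $\Phi_*(\scrJ_!(\lambda))$. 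You also never produce the map $\Delta^{\IW}_{\zeta+\lambda}\to\scrF$, nor explain why equal classes force an isomorphism.
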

    \begin{proof}
        We will first prove that $\Phi_* (\scrJ_! (\lambda, \k)) \cong \Delta_{\zeta + \lambda}^{\IW} (\k)$. 
        
        Let $\tau : \L^+ G \to \Gr$ be the projection.
        We will write $m_{\zeta, \lambda}$ for the restriction of the multiplication map $\L G \times^{\L^+ G} \Gr \to \Gr$ to $\tau^{-1} (\overline{\Gr^{\zeta}}) \times^{\L^+ G} \overline{\Gr^{\lambda}}$.
        It is explained in the proof of \cite[Theorem 3.9]{BGMRR} that the following hold:
        \begin{itemize}
            \item $m_{\zeta, \lambda}$ takes values in $\overline{\Gr^{\zeta + \lambda}} = \overline{\Gr_{\zeta + \lambda}^+}$;
            \item the restriction of $m_{\zeta, \lambda}$ to the preimage of $\Gr_{\zeta + \lambda}^+$ is an isomorphism;
            \item this preimage is contained in $\tau^{-1} (\Gr_{\zeta}^+) \times^{\L^+ G} \Gr^{\lambda}$.
        \end{itemize}
        One can then deduce that $\Phi_* (\scrJ_! (\lambda, \k))$ is supported on $\overline{\Gr_{\zeta + \lambda}^+}$ and that its restriction to $\Gr_{\zeta + \lambda}^+$ is isomorphic to $\scrK_{\lambda}^* (\k) [\langle \lambda, 2\rho \rangle]$.
        Therefore, by Lemma \ref{lem:recollement}, there is a canonical morphism
        \[f_{\lambda}^{\k} : \Delta_{\zeta + \lambda}^{\IW} (\k) \to \Phi_* (\scrJ_! (\lambda, \k))\]
        whose restriction to $\Gr_{\zeta + \lambda}^+$ is an isomorphism.

        We claim that $f_{\lambda}^{\k}$ is an isomorphism.
        Since our constructions are compatible with extension-of-scalars (see Lemma \ref{lem:eos_and_spherical} and Lemma \ref{lem:eos_commutes_with_sh_functors}), it suffices to prove that $f_{\lambda}^{\Z}$ is an isomorphism.
        If $\k \in \{\Q, \F_p\}$, by Lemma \ref{lem:hom_vanishing_between_Phi_and_costds} and Lemma \ref{lem:h_wt_str}, we know that $\Phi_* (\scrJ_! (\lambda, \k))$ has no quotient of the form $\IC_{\zeta + \mu}^{\IW} (\k)$ with $\mu \neq \lambda$; therefore, $f_{\lambda}^{\k}$ is surjective.
        Since $f_{\lambda}^{\F_p}$ is surjective for all primes $p$, we can deduce that $f_{\lambda}^{\Z}$ is also surjective.
        Note that by Lemma \ref{lem:semisimple_char_0}, $\Delta_{\zeta + \lambda}^{\IW} (\Q)$ is simple, as a result $f_{\lambda}^{\Q}$ being surjective forces $f_{\lambda}^{\Q}$ to be an isomorphism.
        In particular, $\ker (f_{\lambda}^{\Z})$ is a torsion object.
        However, $\ker (f_{\lambda}^{\Z})$ is a subobject of $\Delta_{\zeta + \lambda}^{\IW} (\Q)$ which is torsion free; therefore, $\ker (f_{\lambda}^{\Z})$ is also torsion free and hence $0$.
        We conclude that $f_{\lambda}^{\Z}$ is an isomorphism. 

        From the first case, by Lemma \ref{lem:eos_and_spherical} and Lemma \ref{lem:iw_action_commutes_with_D_and_T}, we can deduce that $\Phi_! (\scrJ_* (\lambda, \k)) \cong \nabla_{\zeta + \lambda}^{\IW} (\k)$.
        The other two isomorphism then follow from the already established isomorphisms using  Lemma \ref{lem:iw_action_commutes_with_D_and_T}, Lemma \ref{lem:translation_and_sh_functors}, and Example \ref{ex:translation_functor_for_A1}.
    \end{proof}

    \subsection{Completing the Proof}\quad

    \begin{midsecproof}{Theorem \ref{thm:cs}}
        It suffices to prove that the induced functor
        \[D^b P_{\L^+ G} (\Gr, \k) \stackrel{D^b \Phi_?}{\longrightarrow} D^b P_{\IW, ?} (\Gr, \k) \stackrel{\ref{prop:realization_is_equivalence}}{\longrightarrow} D_{\IW, ?}^b (\Gr, \k)  \]
        is an equivalence of categories. It is easy to see that $D^b P_{\L^+ G} (\Gr, \k)$ (resp. $D_{\IW, ?}^b (\Gr, \k)$) is generated as a triangulated category by both $\{\scrJ_! (\lambda, \k)\}_{\lambda \in \bfXv^+}$ and $\{\scrJ_* (\lambda, \k)\}_{\lambda \in \bfXv^+}$ (resp. $\{\Delta_{\zeta + \lambda}^{\IW}\}_{\lambda \in \bfXv^+}$ and $\{\nabla_{\zeta + \lambda}^{\IW}\}_{\lambda \in \bfXv^+}$).
        Therefore, by Proposition \ref{prop:phi_on_std_costds}, it suffices to prove that for any $\lambda, \mu \in \bfXv^+$ and any $n \in \Z$, the functor $\Phi_?$ induces an isomorphism
        \begin{equation}\label{eq:cs_1}
            \Ext_{P_{\L^+ G} (\Gr, \k)}^n (\scrJ_! (\lambda, \k), \scrJ_* (\mu, \k)) \stackrel{\sim}{\to} \Hom_{D_{\IW, ?}^b (\Gr, \k)} (\Gr, \k) (\Delta_{\zeta + \lambda}^{\IW} (\k), \nabla_{\zeta + \mu}^{\IW} (\k) [n]).
        \end{equation}
        By (\ref{eq:hom_std_costd}) and Lemma \ref{lem:hom_vanishing_for_spherical}, we have that both sides of (\ref{eq:cs_1}) vanish unless $\lambda = \mu$ and $n = 0$.
        Let $\xi^{\k} : \scrJ_! (\lambda, \k) \to \scrJ_* (\mu, \k)$ denote the canonical map induced by adjunction. 
        It suffices to show that $\Phi_? (\xi^{\F})$ is nonzero for every field $\k \to \F$. 
        Of course, the cone of $\xi^{\F}$ is supported on $\overline{\Gr^{\lambda}} \setminus \Gr^{\lambda}$.
        On the other hand, by Proposition \ref{prop:phi_on_std_costds}, the cone of $\Phi_? (\xi^{\F})$ is supported on $\overline{\Gr_{\zeta + \lambda}^+} \setminus \Gr_{\zeta + \lambda}^+$.
        In particular, we must have that $\Phi_? (\xi^{\F})$ is nonzero.
        Therefore, (\ref{eq:cs_1}) is an isomorphism for $\lambda = \mu$ and $n = 0$ which completes the proof.
    \end{midsecproof}


\begin{thebibliography}{S}

        \bibitem[Ach]{Ac}
        P. Achar, {\it Perverse sheaves and applications to representation theory}, Mathematical Surveys and Monographs, 258, Amer. Math. Soc., Providence, RI, [2021] \copyright 2021.

        \bibitem[AB]{AB}
        S. Arkhipov and R. Bezrukavnikov, {\it Perverse sheaves on affine flags and Langlands dual group}, Israel J. Math. {\bf 170} (2009), 135--183.

        \bibitem[ABBGM]{ABBGM}
        S. Arkhipov, R. Bezrukavnikov, A. Braverman, D. Gaitsgory, and I. Mirković., {\it Modules over the small quantum group and semi-infinite flag manifold}, Transform. Groups {\bf 10} (2005), no.~3-4, 279--362.

        \bibitem[BBDG]{BBD}
        A. Beĭlinson, J. Berstein, P. Deligne, and O. Gabber, {\it Faisceaux pervers}, Ast\'erisque {\bf 2018}, no.~100, vi+180 pp.

        \bibitem[Bez1]{Bez01}
        R. Bezrukavnikov, {\it Quasi-exceptional sets and equivariant coherent sheaves on the nilpotent cone}, Represent. Theory {\bf 7} (2003), 1--18.

        \bibitem[BF]{BF}
        R. Bezrukavnikov and M. Finkelberg, {\it Equivariant Satake category and Kostant--Whittaker reduction}, Mosc. Math. J. {\bf 8} (2008), no.~1, 39--72, 183.

        \bibitem[BGMRR]{BGMRR}
        R. Bezrukavnikov, D. Gaitsgory, I. Mirković, L. Rider, and S. Riche, {\it An Iwahori--Whittaker model for the Satake category}, J. \'Ec. polytech. Math. {\bf 6} (2019), 707--735.

        \bibitem[BGS]{BGS}
        A.~A. Beilinson, V. Ginsburg and W. Soergel, {\it Koszul duality patterns in representation theory}, J. Amer. Math. Soc. {\bf 9} (1996), no.~2, 473--527.

        \bibitem[CvdHS]{CvdHS}
        R. Cass, T. van den Hove, and J. Scholbach, {\it Exponential Motives on the Affine Grassmannian}, preprint \href{https://arxiv.org/abs/2603.23435}{arXiv:2603.23435}.

        \bibitem[FGV]{FGV}
        E.~V. Frenkel, D. Gaitsgory and K. Vilonen, {\it Whittaker patterns in the geometry of moduli spaces of bundles on curves}, Ann. of Math. (2) {\bf 153} (2001), no.~3, 699--748.

        \bibitem[Gai1]{Ga01} 
        D. Gaitsgory, {\it Construction of central elements in the affine Hecke algebra via nearby cycles}, Invent. Math. {\bf 144} (2001), no.~2, 253--280.

        \bibitem[Gai2]{GaiWhit}
        D. Gaitsgory, {\it The local and global versions of the Whittaker category}, Pure Appl. Math. Q. {\bf 16} (2020), no.~3, 775--904. 

        \bibitem[GL]{GL} 
        D. Gaitsgory and S. Lysenko, {\it Metaplectic Whittaker category and quantum groups: the ``small'' FLE}, preprint \href{https://arxiv.org/abs/1903.02279}{arXiv:1903.02279}.

        \bibitem[JMW]{JMW}
        D. Juteau, C. Mautner and G. Williamson, {\it Parity sheaves}, J. Amer. Math. Soc. {\bf 27} (2014), no.~4, 1169--1212.

        \bibitem[Lau]{Lau}
        G. Laumon, {\it Transformation de Fourier homog\`ene}, Bull. Soc. Math. France {\bf 131} (2003), no.~4, 527--551.

        \bibitem[Lus]{Lu1}
        G. Lusztig, {\it Singularities, character formulas, and a $q$-analog of weight multiplicities}, in {\it Analysis and topology on singular spaces, II, III (Luminy, 1981)}, 208--229, Ast\'erisque, 101-102, Soc. Math. France, Paris.

        \bibitem[MV]{MV}
        I. Mirković and K. Vilonen, {\it Geometric Langlands duality and representations of algebraic groups over commutative rings}, Ann. of Math. (2) {\bf 166} (2007), no.~1, 95--143.

        \bibitem[NP]{NP}
        B.~C. Ng\^o{} and P. Polo, {\it R\'esolutions de Demazure affines et formule de Casselman-Shalika g\'eom\'etrique}, J. Algebraic Geom. {\bf 10} (2001), no.~3, 515--547.

         \bibitem[RSW]{RSW}
         S. Riche, W. Soergel and G. Williamson, {\it Modular Koszul duality}, Compos. Math. {\bf 150} (2014), no.~2, 273--332.

        \bibitem[San1]{Sa1}
        C. Sandvik, {\it Endoscopy for modular Hecke categories}, preprint \href{https://arxiv.org/abs/2508.10214}{arXiv:2508.10214}.

    \end{thebibliography}
\end{document}